\documentclass[10pt]{article}

\usepackage[margin=1in]{geometry}
\usepackage{amsmath, amsthm, mathtools}
\usepackage{newpxtext, newpxmath}
\usepackage{microtype}

\usepackage{etoolbox}
\usepackage{enumitem}

\usepackage{xcolor}
\usepackage{tikz}
\tikzset{
  vertex/.style={circle, draw, fill=white, inner sep=1.5pt},  % open vertex
  wild edge/.style={thick, black},                            % wild (W) edges
  cedge/.style={thick, dashed, blue},                         % edges of color c_i
}

\usepackage{standalone}
\usepackage{float}

\usepackage{hyperref}
\hypersetup{
    colorlinks=true, 
    linkcolor=blue!50!black,
    citecolor=green!40!black,
    urlcolor=blue!60!black,
}
\usepackage[capitalize]{cleveref}

\newcounter{case}
\crefname{case}{Case}{Cases}
\crefname{construction}{Construction}{Constructions}
\newcommand{\case}[1]{%
    \par\addvspace{\topsep}%
    \refstepcounter{case}%
    \noindent\textbf{Case~\thecase.}~#1.%
    \enspace\ignorespaces
}
\AtBeginEnvironment{proof}{\setcounter{case}{0}}

\DeclareMathOperator{\wld}{wld} % wild number
\DeclareMathOperator{\clb}{clb} % component lower bound

\DeclarePairedDelimiter{\floor}{\lfloor}{\rfloor}
\DeclarePairedDelimiter{\ceil}{\lceil}{\rceil}

\newcommand{\CC}{\mathcal{C}} % color set
\newcommand{\E}{\mathbb{E}}   % expectation
\newcommand{\rest}[1]{|_{#1}} % restriction of a coloring
\newcommand{\Wg}{W_{\mathrm{greedy}}} % greedy output

\usepackage{thmtools}
\declaretheorem[style=plain, numberwithin=section]{theorem}
\declaretheorem[style=plain, sibling=theorem]{lemma, proposition, corollary, conjecture}
\declaretheorem[style=definition, sibling=theorem]{definition, construction}

\numberwithin{figure}{section}

\title{Wild number for spanning trees in complete and complete bipartite graphs}

\author{Aleyah Dawkins \footnote{Carnegie Mellon University, \texttt{adawkins@andrew.cmu.edu}.}
\and
Alexander Nappo \footnote{Villanova University}
\and
Jessie Tong \footnote{San Diego State University}
\and Christopher Zimbizi \footnote{Lake Forest College}}

\date{October 15, 2026}

\begin{document}

\maketitle

\begin{abstract}
    Given an edge-colored graph, the wild number measures how close the graph is to having a spanning tree in every color. We investigate this parameter for complete graphs and complete bipartite graphs, with particular emphasis on minimum proper colorings and equitable $2$-colorings. For complete graphs, we determine the full range of wild numbers attainable by equitable $2$-colorings and show that, although every value in this range occurs, the expected wild number of a uniformly random equitable $2$-coloring tends to $0$ as the order of the graph grows. For minimum proper colorings of $K_n$, we establish a general lower bound, determine the exact wild number for $3 \le n \le 12$, and conjecture that the lower bound is sharp for all $n$. We also develop structural conditions on ideal wild sets and use them to obtain upper bounds and analyze the performance of a natural greedy algorithm. For complete bipartite graphs, we prove a sharp upper bound for equitable $2$-colorings and determine the wild number exactly for minimum proper colorings. Finally, we apply these results to Cartesian products of complete graphs and complete bipartite graphs, obtaining lower bounds for the wild number under minimum proper and product colorings.
\end{abstract}

\section{Introduction}

Since their introduction in 1961 by Nash-Williams and Tutte \cite{NashWilliams,Tutte}, edge-disjoint spanning trees have been instrumental in studying network reliability and performance. The existence of multiple edge-disjoint spanning trees provides independent, yet parallel, paths between vertices, allowing a network to withstand edge failures and avoid bottlenecks. These independent pathways are especially important in networks that wish to support collective communication operations, as they can be used to enhance fault tolerance while managing congestion. Independently, Nash-Williams and Tutte proved that a connected graph $G$ has $k$ edge-disjoint spanning trees if and only if, for any partition $P$ of the vertices of $G$, there are at least $k(|P|-1)$ crossing edges. Determining conditions that guarantee edge-disjoint spanning trees in various graphs has been studied extensively and remains an active area of research \cite{Tarjan,CIOABA,EDSTstars}. 

Edge colorings provide a natural way to organize and allocate the pathways of a network across distinct operations. Tait \cite{tait1880} showed that the four-color problem is equivalent to $3$-edge coloring every planar $3$-connected cubic graph, and edge colorings of many types have since been defined and studied \cite{vizing1964,Grnbaum1973Acyclic,FouquetJolivet1983,Hilton1994}. We refer the reader to a recent survey on edge colorings by Cao, Chen, Jing, Stiebitz, and Toft \cite{cao2019survey} for a more detailed overview. The applications of edge colorings are far-reaching, including scheduling, routing, and resource allocation \cite{sportssched,GANDHAM20081122,LevRouting,Barrett}.

These two topics---edge-disjoint spanning trees as a gauge of network reliability and edge colorings as a means of organizing network operations---come together naturally in the study of the wild number of an edge-colored graph. In network terms, it is the minimum number of links that must be upgraded to carry every operation so that each operation has its own spanning tree. This idea, first introduced and explored by Anders, Foster-Greenwood, Garcia, and Krawzik~\cite{WildNumber}, was inspired by the algebraic theory of generalized graph splines. See \cite{WildSplines} for more on the connection between graphs that admit only constant splines and the wild number of a graph. The Nash-Williams--Tutte theorem can then be thought of as a characterization of when a graph has a $k$-edge coloring that produces wild number $0$. 

In this paper, we investigate the wild number of the complete graph and the complete bipartite graph for minimum proper edge colorings and equitable $2$-edge colorings. Complete graphs are a natural setting for many edge coloring problems, for example, all-to-all scheduling. The complete bipartite graph has applications to scheduling as well, where the interest is the possible interactions between two groups. Minimum proper colorings offer the shortest possible conflict-free schedule for operations that cannot be performed simultaneously, and equitable $2$-colorings offer insight into distributing workload evenly between two operations to prevent congestion.

We begin with preliminary definitions and results in \cref{sec:prelims}. In \cref{sec:complete}, we investigate complete graphs, determining the range of possible wild numbers for equitable $2$-colorings, along with the expected wild number of a uniformly random equitable $2$-coloring. We also provide a lower bound on the wild number for minimum proper colorings and verify that this lower bound is sharp for $3 \le n \le 12$, motivating a conjecture that it is sharp for all $n$, as well as an analysis of the performance of a natural greedy algorithm for constructing these wild sets. We shift our focus to complete bipartite graphs in \cref{sec:complete-bipartite}, proving a sharp upper bound on the wild number for equitable $2$-colorings and determining the wild number exactly for minimum proper colorings. In \cref{sec:products}, we use our results from \cref{sec:complete,sec:complete-bipartite} on minimum proper colorings to obtain lower bounds for Cartesian products of complete graphs and of complete bipartite graphs. We conclude with some remarks in \cref{sec:conclusion}.

\section{Preliminaries}\label{sec:prelims}

Let $\CC = \{c_1, \ldots, c_\ell\}$ be a set of colors. An \emph{$\ell$-edge coloring} of a graph $G$ is a function $\gamma: E(G) \rightarrow \CC$ that assigns a color to each edge of $G$. We call the edge set of a particular color a \emph{color class}. A connected graph $G$ with an edge coloring $\gamma$ is called an \emph{edge-colored graph}, denoted $(G,\gamma)$. When context is clear, we refer to an edge-colored graph $(G,\gamma)$ simply as $G$. Throughout this paper, every edge coloring is assumed to be surjective.

Let $W \subseteq E(G)$. For each color $c_i$, let $E(G_i)=\{e  \in E(G):\gamma(e)=c_i\}$, and let $G_i$ be the spanning subgraph of $G$ with edge set $E(G_i)$. Define $G^W_i$ to be the spanning subgraph of $G$ with edge set $E(G_i) \cup W$. If $G_i^W$ is connected for every $c_i \in \CC$, then $W$ is a \emph{color-connecting set}. We say that an edge $e$ is colored \emph{wild} if $e \in W$. A color-connecting set of minimum size is an \emph{ideal wild set}. The \emph{wild number} of $G$ with respect to the coloring $\gamma$ is 
$$\wld_\gamma(G):=\min\{|W|:W \text{ is a color-connecting set for } G \text{ with coloring } \gamma\}.$$

When the edge coloring is clear from context, we abbreviate $\wld_\gamma(G)$ by $\wld(G)$. If $\ell=1$, then $\wld_\gamma(G)=0$, so we assume $\ell\ge 2$ throughout. We focus our investigation on two commonly studied edge colorings, minimum proper and equitable. A \emph{proper edge coloring} assigns a color to each edge of $G$ such that adjacent edges receive different colors. A proper coloring of $G$ is \emph{minimum} if it uses exactly $\chi'(G)$ colors. An edge coloring is \emph{equitable} if the sizes of any two color classes differ by at most one. Note that it is not required that an equitable coloring be proper. For any $1 \leq i \leq \ell$, we use the notation $\kappa(G_i)$ for the number of connected components of the monochromatic subgraph $G_i$. For general graphs $G$, we use $\kappa(G)$ to denote the number of connected components in $G$ and $\kappa^*(G)$ to denote the number of nontrivial connected components in $G$, that is, components containing at least one edge. 

By the definition of the wild number, for any coloring $\gamma$ of an $n$-vertex graph $G$, we have that 
\begin{equation}\label{eq:def-bound}
    0 \le \wld_\gamma(G) \le n-1.
\end{equation} The following results allow the wild number of a graph to be bounded by the number of monochromatic components.

\begin{proposition}[{\cite[Proposition~5.1]{WildNumber}}]\label{prop:comp-bound}
    If $(G, \gamma)$ is an edge-colored graph with $\ell$ colors, then $$\max_{1 \le i \le \ell} (\kappa (G_i) - 1) \le \wld_\gamma(G) \le \sum_{i=1}^\ell (\kappa (G_i) - 1).$$
\end{proposition}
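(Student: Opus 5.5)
We prove the two inequalities separately, each by a short counting or construction argument on the monochromatic subgraphs $G_i$. Throughout we use the standard fact that a spanning subgraph $H$ of an $n$-vertex graph satisfies $\kappa(H\cup\{e\}) \ge \kappa(H)-1$ for any single edge $e$. Adding one edge merges at most two components.

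\textbf{Lower bound.} Let $W$ be an ideal wild set, so $|W| = \wld_\gamma(G)$, and fix any color $c_i$. The graph $G_i^W$ is obtained from $G_i$ by adding the edges of $W \setminus E(G_i)$ one at a time. Each addition lowers the number of components by at most one. Since $G_i^W$ is connected, we get
\[
1 = \kappa(G_i^W) \ge \kappa(G_i) - |W \setminus E(G_i)| \ge \kappa(G_i) - |W|.
\]
Hence $|W| \ge \kappa(G_i)-1$ for every $i$. Taking the maximum over $i$ gives the lower bound.

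\textbf{Upper bound.} We build a color-connecting set explicitly. Since $G$ is connected, for each color $c_i$ we can choose a set $W_i \subseteq E(G)$ of exactly $\kappa(G_i)-1$ edges such that $G_i \cup W_i$ is connected. To find $W_i$, contract each component of $G_i$ to a single vertex. The resulting multigraph is connected because $G$ is, so it has a spanning tree on $\kappa(G_i)$ vertices. Let $W_i$ be the corresponding edges of $G$.

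Now set $W = \bigcup_{i} W_i$. For each $i$, the graph $G_i^W$ contains $G_i \cup W_i$ as a spanning subgraph, so $G_i^W$ is connected. Thus $W$ is color-connecting, and
\[
\wld_\gamma(G) \le |W| \le \sum_{i=1}^{\ell} |W_i| = \sum_{i=1}^{\ell}(\kappa(G_i)-1).
\]

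The only step needing care is the existence of $W_i$ with exactly $\kappa(G_i)-1$ edges. This is where connectivity of $G$ is used, through the contraction argument above. Everything else is direct bookkeeping.
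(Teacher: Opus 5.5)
Your proof is correct and complete. The paper gives no proof of this proposition (it is quoted from \cite[Proposition~5.1]{WildNumber}), so there is no in-paper argument to compare against. Your two steps are the standard argument: each added edge lowers $\kappa(G_i)$ by at most one, which gives the lower bound for any color-connecting set, and taking the union over colors of spanning trees of the multigraphs obtained by contracting the components of $G_i$ gives the upper bound. You also use the connectivity of $G$, which is built into the paper's definition of an edge-colored graph, exactly where it is needed.
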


\begin{proposition}[{\cite[Proposition~5.4]{WildNumber}}]\label{prop:ceil-bound}
    If $(G, \gamma)$ is an edge-colored graph with $\ell$ colors, then $$\wld_\gamma(G) \ge \ceil*{\frac{\sum_{i=1}^\ell (\kappa (G_i) - 1)}{\ell-1}}.$$
\end{proposition}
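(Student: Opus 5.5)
We prove the bound by counting how much a single wild edge can help all colors at once. Let $W$ be an ideal wild set, so $|W| = \wld_\gamma(G)$. For each color $c_i$, the graph $G_i^W$ is connected. The graph $G_i$ has $\kappa(G_i)$ components, and adding one edge lowers the component count by at most one. Hence $G_i^W$ connected forces the edges of $W$ to merge components of $G_i$ at least $\kappa(G_i)-1$ times.

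To make this precise, for each $i$ choose a subset $W_i \subseteq W$ that is minimal subject to $E(G_i) \cup W_i$ being connected. Equivalently, $W_i$ is the edge set of a spanning tree of the multigraph obtained from $G_i^W$ by contracting each component of $G_i$. Then $|W_i| = \kappa(G_i) - 1$ exactly. Moreover, every $e \in W_i$ joins two distinct components of $G_i$, so in particular $\gamma(e) \neq c_i$.

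The key step is the double count of $\sum_{i=1}^{\ell} |W_i|$. Fix an edge $e \in W$ of color $\gamma(e) = c_j$. Both endpoints of $e$ lie in the same component of $G_j$, because $e$ itself is an edge of $G_j$. Therefore $e \notin W_j$, and $e$ lies in at most $\ell - 1$ of the sets $W_1, \ldots, W_\ell$. It follows that
\[
\sum_{i=1}^{\ell} (\kappa(G_i) - 1) \;=\; \sum_{i=1}^{\ell} |W_i| \;=\; \sum_{e \in W} \#\{i : e \in W_i\} \;\le\; (\ell - 1)\,|W|.
\]

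Dividing by $\ell - 1$ gives $\wld_\gamma(G) \ge \frac{1}{\ell-1}\sum_{i=1}^{\ell} (\kappa(G_i)-1)$. Since $\wld_\gamma(G)$ is an integer, we may take the ceiling of the right-hand side, which is the claimed bound.

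The only point needing care is the observation that a wild edge never helps its own color class. This is exactly what replaces the naive factor $\ell$ by $\ell - 1$. The observation is immediate once we note that $e$ already lies in $G_{\gamma(e)}$, so adding it as a wild edge changes nothing for that color. The rest is a routine spanning-forest count. We are using the standing assumption $\ell \ge 2$, so the division by $\ell - 1$ is valid.
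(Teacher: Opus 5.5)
Your proof is correct. Each minimal $W_i\subseteq W$ has exactly $\kappa(G_i)-1$ edges. An edge of color $c_j$ can never lie in $W_j$. The double count therefore gives $\sum_i(\kappa(G_i)-1)\le(\ell-1)|W|$.

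The paper gives no proof of this bound and simply imports it from \cite{WildNumber}. Your argument rests on the key observation behind that bound, namely that a wild edge cannot merge components of its own color class. That observation is usually stated incrementally: it gives $\delta_W(e)\le \ell-1$. So each wild edge lowers $\sum_i\kappa(G_i^W)$ by at most $\ell-1$, and this sum must fall from $\sum_i\kappa(G_i)$ to $\ell$. Your static spanning-forest count is an equivalent packaging of that same step.
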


For $\ell=2$, the upper bound in \cref{prop:comp-bound} and the bound in \cref{prop:ceil-bound} coincide, so every $2$-edge-colored graph satisfies \cite[Theorem~6.4(2)]{WildNumber}
\begin{equation}\label{eq:two-colors}
    \wld(G)=(\kappa(G_1)-1)+(\kappa(G_2)-1).
\end{equation}

\section{Complete Graphs}\label{sec:complete}
In this section, we explore the wild number of the complete graph given particular edge colorings. We first consider $2$-colorings, then restrict to equitable $2$-colorings before finally considering minimum proper colorings. 

\subsection{$2$-Colorings}
A classical theorem of Nash-Williams and Tutte \cite{NashWilliams,Tutte} implies that the spanning tree packing number of $K_n$ is $\floor{n/2}$. For $n \geq 4$, this yields at least $2$ edge-disjoint spanning trees in $K_n$; call them $T_1$ and $T_2$. Let $\gamma: E(K_n) \rightarrow \{c_1,c_2\}$ be an edge coloring such that $E(T_1) \subseteq E(G_1)$ and $E(T_2) \subseteq E(G_2)$, where the remaining edges of $K_n$ are colored arbitrarily. Then both $G_1$ and $G_2$ are connected spanning subgraphs. Thus, by \eqref{eq:two-colors}, $\wld_\gamma(K_n) = (1-1) + (1-1)  = 0$.
    
We now construct a $2$-coloring producing a color class with $r$ components.

\begin{construction}[Clique-fragmentation coloring]\label{con:clique-frag}
    Let $n>r\ge 2$. Fix a partition of $n$ into $r$ positive integers $s_1 + s_2 + \dots + s_r = n$. Partition $V(K_n)$ into disjoint, nonempty sets $V_1, \dots, V_r$ with $|V_i| = s_i$. Define a $2$-coloring such that:

\begin{itemize}
    \item $G_1 = \bigcup_{i=1}^r K_{s_i}$ (a disjoint union of cliques),
    \item $G_2 = K_{s_1, s_2, \dots, s_r}$ (the complement, a complete multipartite graph).
\end{itemize}
\end{construction}

The condition $r \geq 2$ ensures $E(G_2)$ is nonempty, and $n>r$ ensures $E(G_1)$ is nonempty, guaranteeing surjectivity. An example of such a construction is shown in \cref{fig:k12-con1} where $n=12$, $r=3$, and $s_i=4$ for all $i$.

\begin{figure}[ht]
    \centering
    \begin{tikzpicture}[scale=3, every node/.style={circle, draw, fill=white, inner sep=1pt}]

\begin{scope}[shift = {(0,0)}]
  % Define vertex positions on a circle
  \node (v1) at (0:1) {$v_1$};
  \node (v2) at (30:1) {$v_2$};
  \node (v3) at (60:1) {$v_3$};
  \node (v4) at (90:1) {$v_4$};
  \node (v5) at (120:1) {$v_5$};
  \node (v6) at (150:1) {$v_6$};
  \node (v7) at (180:1) {$v_7$};
  \node (v8) at (210:1) {$v_8$};
  \node (v9) at (240:1) {$v_9$};
  \node (v10) at (270:1) {$v_{10}$};
  \node (v11) at (300:1) {$v_{11}$};
  \node (v12) at (330:1) {$v_{12}$};

  %-----------------------------
  % Red edges
  %-----------------------------

  \draw[line width = 2pt, red] (v1) -- (v2);
  \draw[line width = 2pt, red] (v1) -- (v3);
  \draw[line width = 2pt, red] (v1) -- (v4);
  \draw[line width = 2pt, red] (v2) -- (v3);
  \draw[line width = 2pt, red] (v2) -- (v4);
  \draw[line width = 2pt, red] (v3) -- (v4);

  \draw[line width = 2pt, red] (v5) -- (v6);
  \draw[line width = 2pt, red] (v5) -- (v7);
  \draw[line width = 2pt, red] (v5) -- (v8);
  \draw[line width = 2pt, red] (v6) -- (v7);
  \draw[line width = 2pt, red] (v6) -- (v8);
  \draw[line width = 2pt, red] (v7) -- (v8);

  \draw[line width = 2pt, red] (v9) -- (v10);
  \draw[line width = 2pt, red] (v9) -- (v11);
  \draw[line width = 2pt, red] (v9) -- (v12);
  \draw[line width = 2pt, red] (v10) -- (v11);
  \draw[line width = 2pt, red] (v10) -- (v12);
  \draw[line width = 2pt, red] (v11) -- (v12);

\end{scope}

%%%%%% JUST ONE COLOR CLASS %%%%%%%%%

\begin{scope}[shift = {(2.5,0)}]
  % Define vertex positions on a circle
  \node (v1) at (0:1) {$v_1$};
  \node (v2) at (30:1) {$v_2$};
  \node (v3) at (60:1) {$v_3$};
  \node (v4) at (90:1) {$v_4$};
  \node (v5) at (120:1) {$v_5$};
  \node (v6) at (150:1) {$v_6$};
  \node (v7) at (180:1) {$v_7$};
  \node (v8) at (210:1) {$v_8$};
  \node (v9) at (240:1) {$v_9$};
  \node (v10) at (270:1) {$v_{10}$};
  \node (v11) at (300:1) {$v_{11}$};
  \node (v12) at (330:1) {$v_{12}$};
  
  %-----------------------------
  % Blue edges
  %-----------------------------

  \draw[line width = 2pt, blue] (v1) -- (v5);
  \draw[line width = 2pt, blue] (v1) -- (v6);
  \draw[line width = 2pt, blue] (v1) -- (v7);
  \draw[line width = 2pt, blue] (v1) -- (v8);
  \draw[line width = 2pt, blue] (v1) -- (v9);
  \draw[line width = 2pt, blue] (v1) -- (v10);
  \draw[line width = 2pt, blue] (v1) -- (v11);
  \draw[line width = 2pt, blue] (v1) -- (v12);

  \draw[line width = 2pt, blue] (v2) -- (v5);
  \draw[line width = 2pt, blue] (v2) -- (v6);
  \draw[line width = 2pt, blue] (v2) -- (v7);
  \draw[line width = 2pt, blue] (v2) -- (v8);
  \draw[line width = 2pt, blue] (v2) -- (v9);
  \draw[line width = 2pt, blue] (v2) -- (v10);
  \draw[line width = 2pt, blue] (v2) -- (v11);
  \draw[line width = 2pt, blue] (v2) -- (v12);

  \draw[line width = 2pt, blue] (v3) -- (v5);
  \draw[line width = 2pt, blue] (v3) -- (v6);
  \draw[line width = 2pt, blue] (v3) -- (v7);
  \draw[line width = 2pt, blue] (v3) -- (v8);
  \draw[line width = 2pt, blue] (v3) -- (v9);
  \draw[line width = 2pt, blue] (v3) -- (v10);
  \draw[line width = 2pt, blue] (v3) -- (v11);
  \draw[line width = 2pt, blue] (v3) -- (v12);

  \draw[line width = 2pt, blue] (v4) -- (v5);
  \draw[line width = 2pt, blue] (v4) -- (v6);
  \draw[line width = 2pt, blue] (v4) -- (v7);
  \draw[line width = 2pt, blue] (v4) -- (v8);
  \draw[line width = 2pt, blue] (v4) -- (v9);
  \draw[line width = 2pt, blue] (v4) -- (v10);
  \draw[line width = 2pt, blue] (v4) -- (v11);
  \draw[line width = 2pt, blue] (v4) -- (v12);

  \draw[line width = 2pt, blue] (v5) -- (v9);
  \draw[line width = 2pt, blue] (v5) -- (v10);
  \draw[line width = 2pt, blue] (v5) -- (v11);
  \draw[line width = 2pt, blue] (v5) -- (v12);

  \draw[line width = 2pt, blue] (v6) -- (v9);
  \draw[line width = 2pt, blue] (v6) -- (v10);
  \draw[line width = 2pt, blue] (v6) -- (v11);
  \draw[line width = 2pt, blue] (v6) -- (v12);

  \draw[line width = 2pt, blue] (v7) -- (v9);
  \draw[line width = 2pt, blue] (v7) -- (v10);
  \draw[line width = 2pt, blue] (v7) -- (v11);
  \draw[line width = 2pt, blue] (v7) -- (v12);

  \draw[line width = 2pt, blue] (v8) -- (v9);
  \draw[line width = 2pt, blue] (v8) -- (v10);
  \draw[line width = 2pt, blue] (v8) -- (v11);
  \draw[line width = 2pt, blue] (v8) -- (v12);

\end{scope}

\end{tikzpicture}
    \caption{An example of \cref{con:clique-frag}.}
    \label{fig:k12-con1}
\end{figure}

The following lemma shows that the wild number of $K_n$ with a clique-fragmentation coloring is determined by the number of parts in
the vertex partition of $K_n$.

\begin{lemma}\label{lem:clique-frag}
For any clique-fragmentation coloring with $r$ parts, we have  $$\wld(K_n) = r - 1.$$
\end{lemma}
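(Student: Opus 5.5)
The plan is to apply the two-color formula \eqref{eq:two-colors}, which gives $\wld(K_n) = (\kappa(G_1)-1) + (\kappa(G_2)-1)$ for any $2$-edge-colored graph. The proof then reduces to counting the components of the two color classes in \cref{con:clique-frag}. No ideal wild set needs to be constructed explicitly, since \eqref{eq:two-colors} already encodes both the upper and lower bounds from \cref{prop:comp-bound} and \cref{prop:ceil-bound}.

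First I will show that $\kappa(G_1) = r$. By construction, $G_1$ is the disjoint union of the cliques $K_{s_1}, \dots, K_{s_r}$ on the parts $V_1, \dots, V_r$. Each part is nonempty, and each clique is connected; this includes the case $s_i = 1$, where the part is a single isolated vertex. No edge of $G_1$ joins different parts. Hence $G_1$ has exactly $r$ components. Here it matters that $\kappa$ counts all components, trivial ones included, as opposed to $\kappa^*$.

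Next I will show that $\kappa(G_2) = 1$, i.e.\ that $G_2 = K_{s_1,\dots,s_r}$ is connected, using $r \ge 2$. Take any two vertices $u, v$. If they lie in different parts, they are adjacent in $G_2$. If both lie in the same part $V_i$, choose some $w$ in another part $V_j$ with $j \ne i$; such a part exists and is nonempty because $r \ge 2$. Then $u w v$ is a path in $G_2$. So $G_2$ is connected.

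Substituting these counts into \eqref{eq:two-colors} gives $\wld(K_n) = (r-1) + 0 = r-1$. No step is a real obstacle. The only points needing care are that singleton parts still count as components of $G_1$, and that $r \ge 2$ is exactly what makes $G_2$ connected.
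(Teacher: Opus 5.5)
Your proposal is correct and follows the paper's proof essentially verbatim: both count $\kappa(G_1)=r$ from the disjoint cliques, observe that $G_2=K_{s_1,\dots,s_r}$ is connected because $r\ge 2$, and substitute into \eqref{eq:two-colors}. Your explicit path $u\,w\,v$ argument only spells out the connectivity of $G_2$, which the paper states in one line.
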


\begin{proof}
    By construction, $G_1$ consists of $r$ vertex-disjoint cliques, so $\kappa(G_1) = r$. Since $r \geq 2$ and $s_i \ge 1$ for all $i$, the complete multipartite graph $G_2$ is connected. Applying \eqref{eq:two-colors}, we have
    $$
        \wld(K_n) 
        = 
        \kappa(G_1)+\kappa(G_2)-2 = 
        r+1-2 
        =r-1.\qedhere
    $$
\end{proof}

\begin{theorem}\label{thm:bounds}
    Let $n \ge 4$. For any $2$-coloring of $K_n$,
    $$
        0 \leq \wld(K_n) \leq n - 2.
    $$ Moreover, every integer in this range is attainable by some $2$-coloring. 
\end{theorem}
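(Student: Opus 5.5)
By \eqref{eq:two-colors}, every $2$-coloring of $K_n$ satisfies $\wld(K_n)=\kappa(G_1)+\kappa(G_2)-2$. The proof therefore has two parts. First, bound $\kappa(G_1)+\kappa(G_2)$ from above by $n$. Second, exhibit colorings realizing every value from $0$ to $n-2$. The lower bound $0$ is trivial from \eqref{eq:def-bound}.

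For the upper bound, the key observation is that a graph and its complement cannot both be disconnected. Here $G_2$ is exactly the complement of $G_1$ in $K_n$. Indeed, if $G_1$ is disconnected, then any two vertices in different components of $G_1$ are adjacent in $G_2$. Any two vertices in the same component of $G_1$ have a common $G_2$-neighbor in another component. Hence $G_2$ is connected, and by symmetry at least one of $\kappa(G_1),\kappa(G_2)$ equals $1$.

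Say $\kappa(G_2)=1$. Then $\wld(K_n)=\kappa(G_1)-1$. Surjectivity forces $G_1$ to contain at least one edge, so $\kappa(G_1)\le n-1$. This gives $\wld(K_n)\le n-2$.

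For attainability, \cref{lem:clique-frag} gives the value $r-1$ for every $2\le r\le n-1$, using \cref{con:clique-frag}. This covers every integer from $1$ to $n-2$. The value $0$ comes from the two edge-disjoint spanning trees argument given just before \cref{con:clique-frag}, which is valid since $n\ge 4$.

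The only step requiring care is the complement-connectivity observation together with the use of surjectivity to exclude $\kappa(G_1)=n$. Neither is a real obstacle.
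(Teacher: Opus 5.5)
Your proposal is correct and follows essentially the same route as the paper: since $G_2=\overline{G_1}$, at most one color class is disconnected, surjectivity caps the other at $n-1$ components, and attainability comes from the two-spanning-tree coloring (for $0$) together with \cref{con:clique-frag} taking $r=k+1$ (for $1\le k\le n-2$). The only differences are cosmetic: you use the equality \eqref{eq:two-colors} where the paper cites the upper bound of \cref{prop:comp-bound}, and you prove the complement-connectivity fact that the paper simply states.
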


\begin{proof}
    The surjectivity of the coloring guarantees that $\kappa(G_1) \geq 1$ and $\kappa(G_2) \geq 1$. For any graph $H$ on $n \geq 2$ vertices, at least one of $H$ or its complement $\overline{H}$ is connected. Since $G_1$ and $G_2$ are complements of each other, we then have $\kappa(G_1)=1$ or $\kappa(G_2) = 1$, say $\kappa(G_1) = 1$. A graph on $n$ vertices containing at least one edge has at most $n-1$ connected components, so $$\kappa(G_1) + \kappa(G_2) \leq 1 + (n-1) = n.$$ From \cref{prop:comp-bound}, it follows that $0 \leq \wld(K_n) \leq n - 2$. Since $n \ge 4$, the attainability of the wild number $0$ is shown by the coloring discussed at the beginning of this subsection. For $1\leq k\leq n-2$, choose $r=k+1$ in \cref{con:clique-frag}. Then, by \cref{lem:clique-frag}, we have $\wld(K_n)=r-1=k$, verifying the attainability of every wild number in the range $\{1, \dots, n-2\}$.
\end{proof}

\subsection{Equitable $2$-Colorings}\label{sec:complete-eq2}

We determine the set of wild numbers attainable by equitable $2$-colorings of $K_n$. We begin by constructing an equitable $2$-coloring that can be defined to achieve any number in the full range of possible wild numbers. 

\begin{construction}\label{con:eq2-kn}
Let $n \ge 4$ and let $m \in \{\floor{n(n-1)/4}, \ceil{n(n-1)/4}\}$. Set $t=\ceil*{\frac{1+\sqrt{1+8m}}{2}}$. Fix $m$ and let $1 \le k \le n-t$. We construct an equitable $2$-coloring such that there is a color class containing $m$ edges that form $k+1$ components consisting of one component of size $n-k$ and the remaining components consisting of isolated vertices. 

Partition $V(K_n)$ into two sets $U_1$ and $V_1$ such that $|U_1|=k$ and $|V_1|=n-k$. Choose a spanning tree of $K_n[V_1]$, and then add further edges of $K_n[V_1]$ until the subgraph has $m$ edges. Existence of $m$ edges is guaranteed since $k \le n-t$ gives $n-k \ge t$, and so $\binom{n-k}{2} \ge \binom{t}{2}\ge m$. Connectedness is guaranteed, since $n \ge 4$, by $m\ge \floor{n(n-1)/4} \ge n-2 \ge n-k-1$. 

Let this set of $m$ edges be $M \subset E(K_n)$. Define a $2$-coloring $\gamma$ such that:

\begin{itemize}
    \item $G_1$ is the graph with vertex set $V(K_n)$ and edge set $M$.
    \item $G_2$ is the graph with vertex set $V(K_n)$ and edge set $E(K_n) \setminus M$.
\end{itemize}
\end{construction}

Note that $\kappa(G_1)=k+1$ and $\kappa(G_2)=1$ since $G_2$ contains every edge between $U_1$ and $V_1$. An example of \cref{con:eq2-kn} is shown in \cref{fig:k12-equitable} for $n=12$, $m=33$, and $k=n-t=3$.

\begin{figure}[ht]
    \centering
    \begin{tikzpicture}[scale=3, every node/.style={circle, draw, fill=white, inner sep=1pt}]

\begin{scope}[shift = {(0,0)}]
  % Define vertex positions on a circle
  \node (v1) at (0:1) {$v_1$};
  \node (v2) at (30:1) {$v_2$};
  \node (v3) at (60:1) {$v_3$};
  \node (v4) at (90:1) {$v_4$};
  \node (v5) at (120:1) {$v_5$};
  \node (v6) at (150:1) {$v_6$};
  \node (v7) at (180:1) {$v_7$};
  \node (v8) at (210:1) {$v_8$};
  \node (v9) at (240:1) {$v_9$};
  \node (v10) at (270:1) {$v_{10}$};
  \node (v11) at (300:1) {$v_{11}$};
  \node (v12) at (330:1) {$v_{12}$};

  \node[draw=none, fill=none] at (0,1.3) {$G_1$};

  %-----------------------------
  % Blue edges
  %-----------------------------

  \draw[line width = 2pt, blue] (v4) -- (v8);
  \draw[line width = 2pt, blue] (v4) -- (v9);
  \draw[line width = 2pt, blue] (v4) -- (v10);
  \draw[line width = 2pt, blue] (v4) -- (v11);
  \draw[line width = 2pt, blue] (v4) -- (v12);

  \draw[line width = 2pt, blue] (v5) -- (v6);
  \draw[line width = 2pt, blue] (v5) -- (v7);
  \draw[line width = 2pt, blue] (v5) -- (v8);
  \draw[line width = 2pt, blue] (v5) -- (v9);
  \draw[line width = 2pt, blue] (v5) -- (v10);
  \draw[line width = 2pt, blue] (v5) -- (v11);
  \draw[line width = 2pt, blue] (v5) -- (v12);

  \draw[line width = 2pt, blue] (v6) -- (v7);
  \draw[line width = 2pt, blue] (v6) -- (v8);
  \draw[line width = 2pt, blue] (v6) -- (v9);
  \draw[line width = 2pt, blue] (v6) -- (v10);
  \draw[line width = 2pt, blue] (v6) -- (v11);
  \draw[line width = 2pt, blue] (v6) -- (v12);

  \draw[line width = 2pt, blue] (v7) -- (v8);
  \draw[line width = 2pt, blue] (v7) -- (v9);
  \draw[line width = 2pt, blue] (v7) -- (v10);
  \draw[line width = 2pt, blue] (v7) -- (v11);
  \draw[line width = 2pt, blue] (v7) -- (v12);

  \draw[line width = 2pt, blue] (v8) -- (v9);
  \draw[line width = 2pt, blue] (v8) -- (v10);
  \draw[line width = 2pt, blue] (v8) -- (v11);
  \draw[line width = 2pt, blue] (v8) -- (v12);

  \draw[line width = 2pt, blue] (v9) -- (v10);
  \draw[line width = 2pt, blue] (v9) -- (v11);
  \draw[line width = 2pt, blue] (v9) -- (v12);

  \draw[line width = 2pt, blue] (v10) -- (v11);
  \draw[line width = 2pt, blue] (v10) -- (v12);
  
  \draw[line width = 2pt, blue] (v11) -- (v12);

\end{scope}

%%%%%% JUST ONE COLOR CLASS %%%%%%%%%

\begin{scope}[shift = {(2.5,0)}]
  % Define vertex positions on a circle
  \node (v1) at (0:1) {$v_1$};
  \node (v2) at (30:1) {$v_2$};
  \node (v3) at (60:1) {$v_3$};
  \node (v4) at (90:1) {$v_4$};
  \node (v5) at (120:1) {$v_5$};
  \node (v6) at (150:1) {$v_6$};
  \node (v7) at (180:1) {$v_7$};
  \node (v8) at (210:1) {$v_8$};
  \node (v9) at (240:1) {$v_9$};
  \node (v10) at (270:1) {$v_{10}$};
  \node (v11) at (300:1) {$v_{11}$};
  \node (v12) at (330:1) {$v_{12}$};

  \node[draw=none, fill=none] at (0,1.3) {$G_2$};

  %-----------------------------
  % Red edges
  %-----------------------------

  \draw[line width = 2pt, red] (v1) -- (v2);
  \draw[line width = 2pt, red] (v1) -- (v3);
  \draw[line width = 2pt, red] (v1) -- (v4);
  \draw[line width = 2pt, red] (v1) -- (v5);
  \draw[line width = 2pt, red] (v1) -- (v6);
  \draw[line width = 2pt, red] (v1) -- (v7);
  \draw[line width = 2pt, red] (v1) -- (v8);
  \draw[line width = 2pt, red] (v1) -- (v9);
  \draw[line width = 2pt, red] (v1) -- (v10);
  \draw[line width = 2pt, red] (v1) -- (v11);
  \draw[line width = 2pt, red] (v1) -- (v12);

  \draw[line width = 2pt, red] (v2) -- (v3);
  \draw[line width = 2pt, red] (v2) -- (v4);
  \draw[line width = 2pt, red] (v2) -- (v5);
  \draw[line width = 2pt, red] (v2) -- (v6);
  \draw[line width = 2pt, red] (v2) -- (v7);
  \draw[line width = 2pt, red] (v2) -- (v8);
  \draw[line width = 2pt, red] (v2) -- (v9);
  \draw[line width = 2pt, red] (v2) -- (v10);
  \draw[line width = 2pt, red] (v2) -- (v11);
  \draw[line width = 2pt, red] (v2) -- (v12);

  \draw[line width = 2pt, red] (v3) -- (v4);
  \draw[line width = 2pt, red] (v3) -- (v5);
  \draw[line width = 2pt, red] (v3) -- (v6);
  \draw[line width = 2pt, red] (v3) -- (v7);
  \draw[line width = 2pt, red] (v3) -- (v8);
  \draw[line width = 2pt, red] (v3) -- (v9);
  \draw[line width = 2pt, red] (v3) -- (v10);
  \draw[line width = 2pt, red] (v3) -- (v11);
  \draw[line width = 2pt, red] (v3) -- (v12);

  \draw[line width = 2pt, red] (v4) -- (v5);
  \draw[line width = 2pt, red] (v4) -- (v6);
  \draw[line width = 2pt, red] (v4) -- (v7);

\end{scope}

\end{tikzpicture}
    \caption{\cref{con:eq2-kn}, with $n=12$ and $m=33$, attaining the upper bound for equitable $2$-colorings: $\wld_\varepsilon(K_{12})=12-\ceil{(1+\sqrt{1+8\cdot33})/2}=3$.}
    \label{fig:k12-equitable}
\end{figure}

\begin{lemma}\label{lem:eq2-kn}
    Let $n \ge 4$ and let $m \in \{\floor{n(n-1)/4}, \ceil{n(n-1)/4}\}$. Fix $m$ and let $1 \le k \le n-\left\lceil\frac{1+\sqrt{1+8m}}{2}\right\rceil$. \cref{con:eq2-kn} defines an equitable $2$-coloring $\varepsilon$ such that $$\wld_\varepsilon(K_n)=k.$$
\end{lemma}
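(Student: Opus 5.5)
The plan is to verify three things in turn: that \cref{con:eq2-kn} is well defined, that the resulting coloring $\varepsilon$ is equitable, and that its components are as claimed. Once $\kappa(G_1)=k+1$ and $\kappa(G_2)=1$ are established, the identity \eqref{eq:two-colors} finishes the argument.

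\textbf{Well-definedness and equitability.} First, the construction needs $m$ edges inside $K_n[V_1]$, where $|V_1|=n-k$. The value $t=\ceil*{(1+\sqrt{1+8m})/2}$ is exactly the least integer with $\binom{t}{2}\ge m$, since $\binom{x}{2}=m$ has positive root $(1+\sqrt{1+8m})/2$. The hypothesis $k\le n-t$ then gives $\binom{n-k}{2}\ge\binom{t}{2}\ge m$, as noted in the construction. Second, the spanning tree of $K_n[V_1]$ needs $n-k-1\le m$ edges. This holds because $k\ge1$ and $m\ge\floor{n(n-1)/4}\ge n-2$ for $n\ge4$; the last inequality follows from $n(n-1)/4-(n-2)=(n^2-5n+8)/4>0$, after checking the floor directly for small $n$. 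Third, $|E(G_1)|=m$ and $|E(G_2)|=\binom{n}{2}-m$. For $m$ equal to either $\floor{\binom{n}{2}/2}$ or $\ceil{\binom{n}{2}/2}$, these two class sizes differ by at most one, so $\varepsilon$ is equitable. Both classes are nonempty, so the coloring is surjective.

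\textbf{Components.} In $G_1$, the vertices of $V_1$ form one component because $M$ contains a spanning tree of $K_n[V_1]$. Every vertex of $U_1$ is isolated, since $M\subseteq E(K_n[V_1])$. Hence $\kappa(G_1)=1+|U_1|=k+1$. In $G_2$, every edge between $U_1$ and $V_1$ lies outside $M$. So $G_2$ contains the complete bipartite graph $K_{k,n-k}$ with both sides nonempty, which is connected and spanning. Hence $\kappa(G_2)=1$. Applying \eqref{eq:two-colors} gives $\wld_\varepsilon(K_n)=(k+1-1)+(1-1)=k$.

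\textbf{Main obstacle.} The only real subtlety is the arithmetic in the first step. One must confirm that $t$ is precisely the minimal clique order accommodating $m$ edges, and that the lower bound $m\ge n-2$ holds for both choices of $m$ when $n\ge4$. The latter needs a quick check of small cases such as $n=4$ and $n=5$. Everything else is immediate from the construction.
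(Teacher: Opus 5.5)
Your proof is correct and follows the paper's argument: read off $\kappa(G_1)=k+1$ and $\kappa(G_2)=1$ from \cref{con:eq2-kn} and apply \eqref{eq:two-colors}. Your extra checks of well-definedness and equitability spell out what the paper verifies inside the construction itself. The small-case check you mention is not needed: $n(n-1)/4>n-2$ with $n-2$ an integer already gives $\floor{n(n-1)/4}\ge n-2$.
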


\begin{proof}
     By construction, $G_1$ consists of $k+1$ components and $G_2$ is connected. From \eqref{eq:two-colors}, as $\varepsilon$ is a $2$-coloring, it follows that $\wld_\varepsilon(K_n) = \kappa(G_1) + \kappa(G_2)-2=(k+1)+1-2=k$.
\end{proof}

\begin{theorem}\label{thm:eq2-kn}
    Let $n \ge 4$. For any equitable $2$-coloring of the edges of $K_n$, if both color classes are connected, then $$\wld(K_n)=0.$$ Otherwise, let $m\in \{\floor{n(n-1)/4}, \ceil{n(n-1)/4}\}$ be the number of edges in a disconnected color class. Then $$1 \le \wld(K_n)\le n-\ceil*{\frac{1+\sqrt{1+8m}}{2}}.$$ Moreover, every integer from $0$ to $n-\ceil*{\frac{1+\sqrt{1+8m}}{2}}$ is attainable by some equitable $2$-coloring. 
\end{theorem}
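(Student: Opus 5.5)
We will rely throughout on \eqref{eq:two-colors}, which gives $\wld(K_n)=\kappa(G_1)+\kappa(G_2)-2$ for any $2$-coloring. If both color classes are connected, this is $0$. If not, then, as in the proof of \cref{thm:bounds}, exactly one class is disconnected, because $G_1$ and $G_2$ are complements and at least one of them is connected. Call the disconnected class $H$; it has $m$ edges and $\kappa(H)\ge 2$. The other class is connected, so $\wld(K_n)=\kappa(H)-1\ge 1$. This gives the lower bound.

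For the upper bound, write $q=\kappa(H)$ and let the component sizes be $a_1,\dots,a_q$, with $\sum a_i=n$. All $m$ edges of $H$ lie inside components, so
\[
m\le\sum_{i=1}^q\binom{a_i}{2}.
\]
By convexity of $x\mapsto\binom{x}{2}$ with the $a_i\ge 1$ fixed in number and sum, the right-hand side is maximized when one component has $n-q+1$ vertices and the rest are isolated. Hence $m\le\binom{n-q+1}{2}$. Setting $s=n-q+1$, we get $s(s-1)\ge 2m$, so $s\ge \frac{1+\sqrt{1+8m}}{2}$. Since $s$ is an integer, $s\ge t:=\ceil*{\frac{1+\sqrt{1+8m}}{2}}$. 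Therefore $\wld(K_n)=q-1=n-s\le n-t$.

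For attainability, the value $0$ comes from the coloring at the start of the subsection. We need this coloring to be equitable, so we take two edge-disjoint spanning trees (available for $n\ge 4$) and distribute the remaining edges so that the class sizes differ by at most one. This works because each tree has $n-1\le\floor{n(n-1)/4}$ edges when $n\ge 4$. Each value $k$ with $1\le k\le n-t$ is given by \cref{lem:eq2-kn}, using \cref{con:eq2-kn} with the appropriate $m$.

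The main obstacle is the convexity step. One must justify that concentrating the vertices into a single component maximizes $\sum\binom{a_i}{2}$. I would argue this by a shifting move: replacing $(a_i,a_j)$ with $a_i\ge a_j\ge 2$ by $(a_i+1,a_j-1)$ does not decrease the sum. A second point needs care: the bound $n-t$ depends on which value of $m$ the disconnected class has. We must make sure that the attainability statement uses that same $m$, and that $n-t\ge 1$ whenever a disconnected class exists. The latter follows directly from the inequality $s\le n-1$.
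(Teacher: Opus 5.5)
Your proof is correct and follows the paper's argument essentially step for step: you use complementarity to reduce to a single disconnected class, bound $m \le \binom{n-q+1}{2}$ by convexity, solve the quadratic for the integer $n-q+1$, and get attainability from two edge-disjoint spanning trees together with \cref{con:eq2-kn} and \cref{lem:eq2-kn}. Your version is slightly more complete than the paper's in two places: using the equality \eqref{eq:two-colors} rather than only the upper bound of \cref{prop:comp-bound} makes the lower bound $\wld(K_n)\ge 1$ explicit, and your check that $n-1\le\floor{n(n-1)/4}$ confirms that the wild-number-$0$ coloring can actually be made equitable.
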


\begin{proof}
     By \cref{prop:comp-bound}, we have 
     \begin{equation}\label{eq:2col-bound}
         \wld(K_n)\le \kappa (G_1)+\kappa(G_2)-2.
     \end{equation} 
     Since $G_2=\overline{G_1}$, at most one of $\{G_1,G_2\}$ is disconnected. If both are connected, then \eqref{eq:2col-bound} gives $\wld(K_n)=0$. For the remainder of the proof, we may assume without loss of generality that $\kappa(G_1)>1$ and $\kappa(G_2)=1$. Set $m=|E(G_1)|$; then $m \in \{\floor{n(n-1)/4}, \ceil{n(n-1)/4}\}$ and $|E(G_2)|=\binom{n}{2} - m$. Now we bound the number of components in $G_1$. 
     
     Let $\kappa(G_1)=q$ with components of size $a_1,a_2,\dots,a_q$ where $\sum_{i=1}^qa_i=n$ and each $a_i>0$. The maximum number of edges in a component on $a_i$ vertices is $\binom{a_i}{2}$, so \begin{align*}
     m\le \sum_{i=1}^q\binom{a_i}{2}.
     \end{align*} Since $x\mapsto\binom{x}{2}$ is convex, this sum is maximized when the component sizes are as unequal as possible, namely when one component contains $n-q+1$ vertices and the remaining $q-1$ components are isolated vertices. Thus, $$m\leq \sum_{i=1}^q\binom{a_i}{2}
    \leq \binom{n-q+1}{2}.$$ Let $t=\min\{s : \binom{s}{2}\ge m\}$. Then $t\le n-q+1$ and $q\le n-t+1$. Since $t=\min\{s : \binom{s}{2}\ge m\}$, we have $\binom{t}{2}=\frac{t(t-1)}{2} \ge m$ and thus $t\ge \frac{1+\sqrt{1+8m}}{2}$. By minimality, it follows that $t=\ceil*{\frac{1+\sqrt{1+8m}}{2}}$. From \eqref{eq:2col-bound}, we have $$\wld(K_n)\le q-1\le n-t = n-\ceil*{\frac{1+\sqrt{1+8m}}{2}}.$$
     
     Recall that the spanning tree packing number of $K_n$ is $\floor{n/2}$, so for $n\ge 4$ there are at least two edge-disjoint spanning trees in $K_n$. Assigning a spanning tree to each color class and partitioning the remaining edges in an equitable way verifies the attainability of wild number $0$. For all $1 \le k \le n-\ceil*{\frac{1+\sqrt{1+8m}}{2}}$, \cref{con:eq2-kn} and \cref{lem:eq2-kn} ensure that $\wld_\varepsilon(K_n)= k$, verifying the attainability of every wild number in the range of $\{1, \dots, n-\ceil*{\frac{1+\sqrt{1+8m}}{2}}\}$.
\end{proof}
Note that for all $n \ge 4$, we can simplify the bound to $0 \le \wld(K_n) \le (1-\sqrt{2}/2)\,n+3/2$. 

\subsubsection{Expected Wild Number}

As shown above, every wild number in the attainable range for an equitable $2$-coloring of $K_n$ occurs. However, the expected wild number for a uniformly random equitable $2$-coloring of $K_4$ is only $2/5$. We show that, as $n$ grows, the expected wild number of a uniformly random equitable $2$-coloring of $K_n$ tends to $0$. The following lemma is an immediate consequence of Gilbert's bounds on the connectivity probability.

\begin{lemma}[\cite{Gilbert}]
\label{lem:gilbert-connectivity}
Let $G$ be the random graph on $n$ vertices where every edge occurs independently with probability $1/2$. Then $\Pr[G\text{ is disconnected}]=O(n2^{-n})$.
\end{lemma}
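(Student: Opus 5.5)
We prove the bound with a union bound over vertex cuts. Let $G \sim G(n,1/2)$. If $G$ is disconnected, then some set $S \subseteq V(G)$ with $1 \le |S| = s \le \floor{n/2}$ has no edges to $V(G)\setminus S$; take $S$ to be the smaller side of the cut formed by one component and the rest. For a fixed $S$ of size $s$, the $s(n-s)$ potential crossing edges are independent, each absent with probability $1/2$, so the probability that all are missing is $2^{-s(n-s)}$. The union bound gives
\[
\Pr[G \text{ is disconnected}] \le \sum_{s=1}^{\floor{n/2}} \binom{n}{s} 2^{-s(n-s)}.
\]

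The $s=1$ term is exactly $n2^{-(n-1)} = 2n\cdot 2^{-n}$, which already has the claimed order. So the remaining task is to show that the terms with $s \ge 2$ contribute $O(n2^{-n})$, or in fact $o(n2^{-n})$.

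Bounding this tail is the only real step. I would use $\binom{n}{s} \le n^s$ together with $n-s \ge n/2$ for $s \le n/2$. This gives
\[
\binom{n}{s}2^{-s(n-s)} \le \left(n2^{-n/2}\right)^{s}.
\]
For large $n$ we have $q := n2^{-n/2} < 1/2$, so the sum over $s \ge 2$ is at most $2q^2 = 2n^2 2^{-n}$. That bound is too weak by a factor of $n$.

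To recover the factor of $n$, I would treat small $s$ with the sharper exponent $n-s$ instead of $n/2$. For $2 \le s \le \sqrt{n}$, say, the term is at most $n^s 2^{-s(n-\sqrt n)}$. The $s=2$ term is then $O(n^2 2^{-2n+2\sqrt n}) = o(n2^{-n})$, and the later terms decrease geometrically. For $\sqrt n < s \le n/2$, the crude bound gives at most $(n2^{-n/2})^{\sqrt n}$, and summing over at most $n$ values of $s$ still yields $o(n2^{-n})$.

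Combining the $s=1$ term with both pieces of the tail gives $\Pr[G\text{ disconnected}] = O(n2^{-n})$. Since the $s=1$ term alone is $2n2^{-n}$, this order is exactly what the statement claims. Alternatively, one can simply cite Gilbert's explicit estimate $1-\Pr[\text{connected}] \sim n2^{1-n}$ and note that it immediately gives the $O$-bound.
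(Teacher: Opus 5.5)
Your argument is correct and complete, but it is not what the paper does. The paper gives no proof here; it simply imports Gilbert's asymptotic $1-\Pr[G\text{ connected}]\sim n2^{1-n}$, which is the alternative you mention in your last sentence.

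Your union bound over cuts with $|S|\le n/2$ makes the lemma self-contained, and your treatment of the tail is sound. For $2\le s\le\sqrt n$, the terms are dominated by a geometric series with ratio $n2^{-(n-\sqrt n)}\to 0$. Its first term is $n^2 2^{-2n+2\sqrt n}=n2^{-n}\cdot n2^{-n+2\sqrt n}=o(n2^{-n})$. The range $s>\sqrt n$ contributes at most $n^{1+\sqrt n}2^{-n^{3/2}/2}$, which is negligible.

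This is essentially the mechanism behind Gilbert's estimate itself. He decomposes according to the component containing a fixed vertex rather than the smaller side of a cut. Your bound already gives $\Pr[G\text{ disconnected}]\le(2+o(1))\,n2^{-n}$, with the $s=1$ term dominating. So the only thing the citation adds is the matching lower bound, which the paper never uses.

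Note also that the paper's single application, \cref{cor:edge-disconnection}, would survive with your crude bound $O(n^2 2^{-n})$. Dividing by $\Theta(1/n)$ still leaves $O(n^3 2^{-n})=o(1/n)$. The extra work for small $s$ is needed only to match the lemma as stated.
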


\begin{corollary}\label{cor:edge-disconnection}
Let $N=\binom{n}{2}$ and let $M\in
\{\floor{N/2}, \ceil{N/2}\}$. If $G_{n,M}$ is chosen uniformly from all graphs on $n$ vertices with $M$ edges, then $$\Pr[G_{n,M}\text{ is disconnected}] =o(1/n). $$
\end{corollary}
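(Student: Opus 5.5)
The plan is to transfer the bound of \cref{lem:gilbert-connectivity} from the binomial model $G(n,1/2)$ to the uniform model $G_{n,M}$ by conditioning on the number of edges. In $G(n,1/2)$ the number of edges $X$ is distributed as $\mathrm{Bin}(N,1/2)$. Conditioned on $X=M$, the graph is uniform over all $M$-edge graphs, that is, it is distributed exactly as $G_{n,M}$. Therefore
\[
\Pr[G(n,1/2)\text{ disconnected}] \;\ge\; \Pr[X=M]\cdot \Pr[G_{n,M}\text{ disconnected}],
\]
and so
\[
\Pr[G_{n,M}\text{ disconnected}] \;\le\; \frac{O(n2^{-n})}{\Pr[X=M]}.
\]

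The key step is a lower bound on $\Pr[X=M]$ for $M\in\{\floor{N/2},\ceil{N/2}\}$, i.e.\ at a central binomial point. The central binomial coefficient satisfies
\[
\binom{N}{\floor{N/2}}2^{-N}=\Theta\bigl(N^{-1/2}\bigr),
\]
which follows from Stirling's formula or from the elementary bound $\binom{N}{\floor{N/2}}\ge 2^N/(N+1)$, since it is the largest of the $N+1$ coefficients. The elementary bound alone suffices: it gives $\Pr[X=M]\ge 1/(N+1)$, and $\ceil{N/2}$ carries the same probability as $\floor{N/2}$ by symmetry. With $N=\binom{n}{2}=\Theta(n^2)$, this yields
\[
\Pr[G_{n,M}\text{ disconnected}]\le O(n2^{-n})\cdot (N+1)=O\bigl(n^3 2^{-n}\bigr),
\]
and this is $o(1/n)$ because $n^4 2^{-n}\to 0$.

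There is no real obstacle here. The only step needing care is justifying that the conditional distribution given $X=M$ is uniform, which holds because every graph with $M$ edges has the same probability $2^{-N}$ in $G(n,1/2)$. The exponential decay comfortably absorbs the polynomial loss from conditioning, so even the crude $1/(N+1)$ bound suffices.
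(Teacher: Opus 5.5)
Your proposal is correct and follows the paper's argument: condition $G(n,1/2)$ on having exactly $M$ edges and divide Gilbert's $O(n2^{-n})$ bound by $\Pr[|E(G)|=M]$. The only difference is that you bound this probability from below by the elementary $1/(N+1)$ rather than by Stirling's $\Theta(N^{-1/2})$. This loses a factor of $n$, giving $O(n^3 2^{-n})$ instead of the paper's $O(n^2 2^{-n})$, but the result is still $o(1/n)$.
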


\begin{proof}
    Let $G$ be the random graph on $n$ vertices where every edge occurs independently with probability $1/2$. Conditional on $|E(G)|=M$, the graph $G$ is uniformly distributed over all graphs on $n$ vertices with exactly $M$ edges. It follows that
    \begin{align*}
        \Pr[G_{n,M}\text{ is disconnected}]
        &=
        \Pr[G\text{ is disconnected}\mid |E(G)| = M]\\
        &\le
        \frac{\Pr[G\text{ is disconnected}]}
        {\Pr[|E(G)| = M]}\\
        &=\frac{\Pr[G\text{ is disconnected}]}
        {2^{-N}\binom{N}{M}}
    \end{align*}
    From \cref{lem:gilbert-connectivity}, we have $\Pr[G \text{ is disconnected}] = O(n2^{-n})$. Applying Stirling's approximation to the central binomial coefficient, we have 
    $$
        2^{-N}\binom{N}{M} = \Theta(N^{-1/2})  = \Theta(1/n)$$ since $|M - N/2| = O(\sqrt{N})$. Therefore $$ \Pr[G_{n,M}\text{ is disconnected}] = \frac{O(n 2^{-n})}{\Theta(1/n)} = O(n^2 2^{-n}) = o(1/n). \qedhere
    $$
\end{proof}

\begin{theorem}\label{thm:expected}
    Let $\varepsilon$ be a uniformly random equitable $2$-coloring of $K_n$. Then $\E[\wld_\varepsilon(K_n)] \to 0$ as $n \to \infty$.
\end{theorem}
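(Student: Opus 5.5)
We bound the expectation by conditioning on which color class, if any, is disconnected, and then combine the pointwise bound of \cref{thm:eq2-kn} with the probability estimate of \cref{cor:edge-disconnection}.

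\textbf{Step 1: reduce to a single random graph.} Let $N=\binom{n}{2}$. A uniformly random equitable $2$-coloring $\varepsilon$ is obtained as follows. If $N$ is even, choose $E(G_1)$ uniformly among $N/2$-subsets. If $N$ is odd, first choose which color receives $\ceil{N/2}$ edges, then choose that class uniformly. In either case each color class, viewed alone, is distributed as $G_{n,M}$ with $M\in\{\floor{N/2},\ceil{N/2}\}$: given its size, a uniform subset of that size.

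\textbf{Step 2: pointwise bound.} By \cref{thm:eq2-kn}, $\wld_\varepsilon(K_n)=0$ whenever both color classes are connected. In every case $\wld_\varepsilon(K_n)\le n-2$, by \cref{thm:bounds}; the sharper bound $(1-\sqrt2/2)n+3/2$ also works but is not needed.

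\textbf{Step 3: combine.} We get
\[
\E[\wld_\varepsilon(K_n)] \le (n-2)\,\Pr[G_1 \text{ or } G_2 \text{ is disconnected}] \le (n-2)\bigl(\Pr[G_1\text{ disc.}]+\Pr[G_2\text{ disc.}]\bigr).
\]
Each probability on the right is an average, over the at most two possible class sizes, of $\Pr[G_{n,M}\text{ is disconnected}]$ with $M\in\{\floor{N/2},\ceil{N/2}\}$. By \cref{cor:edge-disconnection}, each such term is $o(1/n)$. Hence the expectation is at most $(n-2)\cdot o(1/n)=o(1)$, so it tends to $0$.

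\textbf{Main obstacle.} The step needing the most care is Step 1: checking that, under the uniform measure on equitable colorings, each individual color class really is uniform on graphs with $M$ edges for the relevant $M$. This holds because of the bijection between equitable colorings and the choice of the $c_1$-edge set. The estimate itself is routine, because the corollary gives $O(n^22^{-n})$, which easily absorbs the factor $n$.
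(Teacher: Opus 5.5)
Your proof is correct and follows essentially the same route as the paper: both treat each color class as a uniform $G_{n,M}$ with $M\in\{\lfloor N/2\rfloor,\lceil N/2\rceil\}$ and let the $o(1/n)$ estimate of \cref{cor:edge-disconnection} absorb a linear factor. The only cosmetic difference is the pointwise bound. The paper writes $\wld(K_n)=(\kappa(G_1)-1)+(\kappa(G_2)-1)$ and bounds each term's expectation by $(n-1)\Pr[G_i\text{ is disconnected}]$, whereas you use $\wld(K_n)\le n-2$ from \cref{thm:bounds} together with a union bound.
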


\begin{proof}
Let $N=\binom{n}{2}$. Since $\varepsilon$ is equitable, for $i=1,2$, we have $|E(G_i)| \in \{\floor{N/2}, \ceil{N/2}\}$. From \eqref{eq:two-colors}, the wild number of $K_n$ with $\varepsilon$ is  $$\wld(K_n) = (\kappa(G_1)-1)+ (\kappa(G_2) - 1).$$ The expected wild number of $K_n$ is then \[\E[\wld(K_n)] = \E[\kappa(G_1)-1] + \E[\kappa(G_2)-1].\] Conditional on its number of edges, each $G_i$ is uniformly distributed over all graphs on $n$ vertices with that many edges. Since a graph on $n$ vertices can have at most $n$ components, for $i=1,2$, we then have \[\E[\kappa(G_i)-1] \le (n-1)\Pr[G_i\text{ is disconnected}] =o(1)
    \] by \cref{cor:edge-disconnection}. Thus, 
    $$
        \E[\wld(K_n)] = \E[\kappa(G_1)-1] + \E[\kappa(G_2)-1] = o(1) \longrightarrow 0.\qedhere
    $$
\end{proof}

\subsection{Minimum Proper Colorings}

When $n$ is odd, $\chi'(K_n)=n$ and every color class is a near-perfect matching of size $(n-1)/2$; when $n$ is even, $\chi'(K_n)=n-1$ and every color class is a perfect matching of size $n/2$. For a color $c_i$, we write $M_i$ for its monochromatic matching. For odd $n$, we denote the isolated vertex in color $c_i$ by $v_i$. Note that each color misses exactly one vertex, and each vertex misses exactly one color, so $c_i \mapsto v_i$ is a bijection between colors and vertices; in particular, the $v_i$ are distinct.

For minimum proper colorings of $K_n$, ideal wild sets satisfy several structural constraints that will be used throughout this section.

\begin{lemma}\label{lem:proper-structure}
    Let $n\ge3$ and let $\rho$ be a minimum proper coloring of $K_n$. Every color-connecting set is incident to at least $n-1$ vertices, and to all $n$ vertices when $n$ is odd. Moreover, every ideal wild set is a forest in which every nontrivial component has at least three vertices.
\end{lemma}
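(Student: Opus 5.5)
The plan has three parts: two incidence claims, the forest claim, and the exclusion of single-edge components. The incidence claims come from looking at a single vertex, and the rest from the fact that each $G_i$ is a matching.

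\textbf{Incidence.} Let $W$ be a color-connecting set and suppose $v$ is incident to no edge of $W$. In $G_i^W$, the degree of $v$ is then its degree in the matching $M_i$, which is $0$ or $1$.

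For $n$ odd, take the color $c_i$ with $v=v_i$. Then $v$ is isolated in $G_i^W$, and since $n\ge3$ this graph is disconnected. Hence $W$ covers all $n$ vertices.

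For $n$ even, suppose two vertices $u,v$ are both missed by $W$. Pick the color $c_i$ of the edge $uv$. In $G_i^W$ both $u$ and $v$ have degree exactly $1$, so $\{u,v\}$ is a component of $G_i^W$. Since $n\ge4$ here, this graph is disconnected. Hence $W$ misses at most one vertex, i.e.\ it is incident to at least $n-1$ vertices.

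\textbf{Forest.} Let $W$ be ideal and suppose $W$ contains a cycle $C$. Take any edge $e$ of $C$. For every $i$, the endpoints of $e$ remain joined in $G_i^{W\setminus\{e\}}$ by the path $C-e$, which lies in $W\setminus\{e\}$. So $W\setminus\{e\}$ is still color-connecting, contradicting minimality.

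\textbf{No single-edge components.} Suppose $W$ is ideal and has a component that is a single edge $e=uv$, so no other edge of $W$ meets $u$ or $v$. Let $c_j$ be the color of $e$. Minimality of $W$ means that removing $e$ disconnects some $G_i^W$, so some color $c_i$ has $G_i^{W\setminus\{e\}}$ disconnected.

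If $i=j$, the edge $e$ already lies in $G_j$, so removing it from $W$ changes nothing. Hence $i\ne j$ and $e$ is a bridge of $G_i^W$. In $G_i^{W\setminus\{e\}}$, the only possible edges at $u$ are its $M_i$-edge, and likewise for $v$. So one side of the cut, say the side containing $u$, consists of $u$ together with at most its $M_i$-partner $u'$.

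The aim is to show that this side is a whole component, which contradicts connectivity.
\begin{itemize}
\item If $u'$ does not exist (odd $n$, $u=v_i$), the side is $\{u\}$ alone. But then $u$ touches $W$ only through $e$, and removing $e$ leaves $u$ isolated.
\item Otherwise the side is $\{u,u'\}$, which is a component unless $u'$ has a further edge. That further edge can only come from $W$, because $u'$ has no other $M_i$-edge. So the real content is to check whether $W$ meets $u'$.
\end{itemize}

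This last step is the main obstacle, and it needs more than the local picture above. The argument I would attempt is to replace $e$ by a different edge so that the size of $W$ is unchanged, the set stays color-connecting, and $e$'s component merges with another component. Repeating this gives an ideal wild set with no single-edge components.

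If the lemma is meant to hold for \emph{every} ideal set rather than just some, I would instead argue directly by the following count. A single-edge component $\{u,v\}$ can serve as a bridge only for colors $c_i$ where $u$ or $v$ is $M_i$-isolated, or where the $M_i$-partner $u'$ hangs off $W$ in a specific way. I would then show that the other components of $W$ already connect $u$ and $v$ through the matching edges in every color. That would make $e$ redundant, contradicting minimality. I expect this count, which uses that the colors are perfect or near-perfect matchings, to be the delicate part.
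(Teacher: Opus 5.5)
Your incidence and forest arguments are correct and match the paper's. The gap is in the single-edge-component step, which you leave unfinished. Your fallback, modifying $e$ to obtain \emph{some} ideal set without such components, would only prove a weaker statement than the lemma, which asserts this for \emph{every} ideal wild set. The route through a color $c_i$ for which $e$ is a bridge cannot close either. The $M_i$-partner $u'$ may lie in a large wild component, and even your first bullet ($u=v_i$) only restates that $e$ is needed, which is consistent with minimality rather than contradicting it.

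The missing idea is to look at the color $c_j=\rho(uv)$ of the edge itself. Since $uv\in M_j$, the vertices $u$ and $v$ are matched to each other in $M_j$ and have no other edge of color $c_j$. Since $\{u,v\}$ is a component of $W$, they have no other wild edge. Hence in $G_j^W$ the only neighbor of $u$ is $v$ and vice versa. So $\{u,v\}$ is a whole component of $G_j^W$, and $G_j^W$ is disconnected because $n\ge 3$.

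This is exactly your even-$n$ incidence argument, with the pair of uncovered vertices replaced by the endpoints of an isolated wild edge. It is what the paper means by ``the same argument as above.'' It shows that no color-connecting set at all, ideal or not, can have a single-edge component, so minimality is not needed for this part. Your case analysis dismissed $i=j$ only on the grounds that removing $e$ leaves $G_j^W$ unchanged. It therefore missed that $G_j^W$ is already disconnected with $e$ present.
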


\begin{proof}
    First, any color-connecting set must be incident to at least $n-1$ vertices. Indeed, if two vertices are not incident to any wild edge, then the unique edge joining them is isolated in its color class, preventing that color class from becoming connected. When $n$ is odd, each color class is a near-perfect matching with a unique isolated vertex. Consequently, every vertex must be incident to a wild edge, as an isolated vertex would remain isolated in its missing color. 
    
    Minimality of an ideal wild set further implies that every ideal wild set is a forest. If an edge $e$ lies on a cycle of a color-connecting set $W$, then the endpoints of $e$ remain joined by a path in $W-e$, thus $G_i^{W-e}$ would remain connected for all $i$ implying $W-e$ is a color-connecting set. By the same argument as above, no component formed by the edges of an ideal wild set can be a single edge as the edge would remain isolated in its own color class. Thus, every nontrivial component contains at least three vertices.
\end{proof}

\subsubsection{Lower Bound}
The properties of ideal wild sets discussed above provide a natural lower bound on the wild  number.

\begin{theorem}\label{thm:proper-kn-lowerbound}
For any minimum proper coloring $\rho$ of $K_n$ with $n \geq 3$, the wild number satisfies
$$
    \wld_\rho(K_n) \ge \floor{ \frac{4\ceil{n/2}}{3}}.
$$
\end{theorem}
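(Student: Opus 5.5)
The bound will come from \cref{lem:proper-structure} alone. Let $W$ be an ideal wild set. By the lemma, $W$ is a forest whose nontrivial components $T_1,\dots,T_p$ each have at least three vertices. A forest component on $a_j$ vertices has $a_j-1$ edges, so
\[
|W| = \sum_{j=1}^p (a_j - 1) = A - p, \qquad \text{where } A=\sum_{j=1}^p a_j
\]
is the number of vertices incident to $W$. Since each $a_j \ge 3$, we have $p \le A/3$. Hence $|W| \ge A - \floor{A/3} = \ceil{2A/3}$.

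Next we bound $A$ below using the first part of \cref{lem:proper-structure}. If $n$ is odd, then $A = n$, so $|W| \ge \ceil{2n/3}$. If $n$ is even, then $A \ge n-1$, so $|W| \ge \ceil{2(n-1)/3}$. Here there is one extra point: if $A = n-1$, the single uncovered vertex is harmless, and the estimate $\ceil{2(n-1)/3}$ still applies. Since $A - \floor{A/3}$ is nondecreasing in $A$, using the smallest allowed $A$ is legitimate.

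It remains to check that these quantities equal $\floor{4\ceil{n/2}/3}$, which is routine arithmetic. For $n$ odd, write $n = 2k+1$, so $\ceil{n/2} = k+1$ and we need $\ceil{(4k+2)/3} = \floor{(4k+4)/3}$. For $n$ even, write $n = 2k$, so $\ceil{n/2} = k$ and we need $\ceil{(4k-2)/3} = \floor{4k/3}$. Each is verified by checking $k \bmod 3$. For example, with $k=3j$ even case: $\ceil{(12j-2)/3} = 4j = \floor{4j}$. With $k = 3j+1$: $\ceil{(12j+2)/3} = 4j+1 = \floor{(12j+4)/3}$. With $k = 3j+2$: $\ceil{(12j+6)/3} = 4j+2 = \floor{(12j+8)/3}$. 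The odd case is similar.

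The main obstacle is making sure the counting is tight in the even case with $A = n-1$: does that case ever produce something smaller than the stated bound? The arithmetic above shows $\ceil{2(n-1)/3}$ matches the stated bound, so no further structural argument about the uncovered vertex is needed. If a mismatch appeared in some residue class, I would refine by showing the uncovered vertex forces an additional edge, but I expect the direct calculation to suffice.
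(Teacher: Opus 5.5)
Your proposal is correct and follows essentially the same route as the paper's proof. The paper also writes $|W| = s - t$ with $t \le \floor{s/3}$ from \cref{lem:proper-structure}, uses that $x - \floor{x/3}$ is nondecreasing, and finishes with a residue check mod $3$; the only difference is cosmetic: it combines both parities through $s \ge 2\ceil{n/2}-1$ and checks $k=\ceil{n/2}$ mod $3$ once, where you treat odd and even $n$ separately.
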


\begin{proof}
    Let $W$ be an ideal wild set. Let $s$ be the number of vertices incident to an edge in $W$. By \cref{lem:proper-structure}, we have $s \ge n-1$ if $n$ is even and $s=n$ if $n$ is odd. In all cases,
    $s\ge 2\ceil{n/2}-1$. Further, we have that $W$ is a forest on $s$ vertices. Let $t$ be the number of tree components of $W$. Since each component contains at least $3$ vertices, by \cref{lem:proper-structure}, we have $t\le \floor{s/3}$ so $|W|=s-t\ge s-\floor{s/3}$. Note that the function $f(x)=x-\floor{x/3}$ is non-decreasing on the integers. Since $s\ge 2\ceil{n/2}-1$, we have
    $$
        |W|
        \ge
        \bigl(2\ceil{n/2}-1\bigr)
        -
        \left\lfloor
            \frac{2\ceil{n/2}-1}{3}
        \right\rfloor.
    $$ Setting $k=\ceil{n/2}$ and writing $k=3q+r$ with $r\in\{0,1,2\}$, both $2k-1-\floor{(2k-1)/3}$ and $\floor{4k/3}$ equal $4q+r$. Therefore, $|W|\ge \left\lfloor\frac{4\lceil n/2\rceil}{3}\right\rfloor$ and so
    $$
        \wld_\rho(K_n)\ge
        \left\lfloor\frac{4\lceil n/2\rceil}{3}\right\rfloor.\qedhere
    $$
\end{proof}

\subsubsection{Structure of Ideal Wild Sets} 

Recall that $v_i$ is the vertex isolated in color $c_i$. We call a set $T=\{v_i,v_j,v_k\}\subseteq V(K_n)$ a \emph{valid triple} if $\rho(v_jv_k)\neq c_i$, $\rho(v_iv_k)\neq c_j$, and $\rho(v_iv_j)\neq c_k$. This condition is necessary for $T$ to occur as a component of a color-connecting set, since otherwise there would be no edge of color $c_i$, $c_j$, or $c_k$ connecting $T$ to the remaining vertices. We call $T$ \emph{invalid for $c_i$} if $T$ is not a valid triple because $\rho(v_jv_k)=c_i$, or simply \emph{invalid} when color does not matter.

\begin{lemma} \label{lem:valid-triple-existence}
Let $\rho$ be a minimum proper coloring of $K_n$. For odd $n\ge7$, the graph $K_n$ contains a valid triple.
\end{lemma}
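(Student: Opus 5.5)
The plan is a double-counting argument: I will show that the number of invalid triples is strictly less than the total number $\binom{n}{3}$ of $3$-subsets of $V(K_n)$. Since $n$ is odd, the map $c_i \mapsto v_i$ is a bijection between colors and vertices. So every $3$-subset of $V(K_n)$ can be written as $T=\{v_i,v_j,v_k\}$ for distinct colors $c_i,c_j,c_k$, and the notion of a valid triple applies to every $3$-subset.

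First, I will characterize the triples that are invalid for a fixed color $c_i$. Such a $T$ contains $v_i$, and its other two vertices $v_j,v_k$ span an edge with $\rho(v_jv_k)=c_i$. Conversely, each edge $e=xy$ of color $c_i$ gives exactly one such triple, namely $\{v_i,x,y\}$. This is a genuine $3$-set because $v_i$ is isolated in color $c_i$, so $v_i\notin\{x,y\}$. Hence the triples invalid for $c_i$ are in bijection with the edges of $M_i$, and there are exactly $|M_i|=(n-1)/2$ of them.

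Next, I will sum over all $n$ colors. A triple is invalid exactly when it is invalid for at least one of its three colors, so the number of invalid triples is at most
\[
\sum_{i=1}^{n}|M_i| = n\cdot\frac{n-1}{2}=\binom{n}{2}.
\]
This overcounts any triple that is invalid for several colors, but an upper bound suffices.

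Finally, I will compare the two counts. We have $\binom{n}{3}>\binom{n}{2}$ if and only if $(n-2)/3>1$, that is, $n>5$. So for odd $n\ge7$ some $3$-subset is not invalid, which gives a valid triple. No step here is a real obstacle. The only points needing care are that each color-$c_i$ edge together with $v_i$ forms a genuine $3$-set, and that the threshold $n\ge7$ is exactly where the strict inequality kicks in for odd $n$.
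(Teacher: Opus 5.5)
Your proposal is correct and is essentially the paper's own proof: each edge $xy$ of color $c_i$ witnesses exactly the invalid triple $\{v_i,x,y\}$, so there are at most $\binom{n}{2}$ invalid triples. Since $\binom{n}{3}>\binom{n}{2}$ for $n\ge 7$, a valid triple must exist.
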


\begin{proof}
Each edge $xy\in M_i$ witnesses the invalid triple
$\{v_i,x,y\}$. Conversely, every invalid triple is witnessed by at least one of the edges between its vertices.

An edge witnesses exactly one invalid triple, since its color $c_i$ determines a unique vertex $v_i$. Therefore, the number of invalid triples is at most
$|E(K_n)|=\binom{n}{2}$. Since $n\ge7$, the number of total triples, $\binom{n}{3}$, exceeds $\binom{n}{2}$.
Hence not every triple is invalid, and a valid triple exists.
\end{proof}

When $n$ is odd, \cref{lem:proper-structure} implies that any ideal wild set must be a spanning forest where every component contains at least $3$ vertices. We call the vertex sets of these components \emph{blocks}. 

A forest of blocks fails to be color-connecting when a nonempty union of blocks is closed under some color class and is, therefore, disconnected from the remaining blocks. We isolate this obstruction in the following definition.

\begin{definition}\label{def:seal} Let $\rho$ be a minimum proper coloring of $K_n$ with $n$ odd, and let $P = \{B_1, \dots, B_t\}$ be a partition of $V(K_n)$ into blocks where $|B_j| \ge 3$ for $1 \le j \le t$. A \emph{seal} is a pair $(c_i, \mathcal{U})$, where $c_i$ is a color and $\mathcal{U} \subseteq P$ is a nonempty set of blocks whose vertex union $U = \bigcup_{B \in \mathcal{U}} B$ satisfies:
\begin{enumerate}[label=(\roman*)]
    \item $v_i \notin U$, and
    \item\label{it:seal-closed} every edge of $M_i$ with an endpoint in $U$ has both endpoints in $U$; equivalently, $M_i$ restricted to $U$ is a perfect matching of $U$.
\end{enumerate}
\end{definition}

We call $P$ \emph{seal-free} if it admits no seal. Since a perfect matching requires an even number of vertices, $|U|$ is even for every seal; in particular, a single odd block cannot by itself form a seal. Up to relabeling, the two possible seals consisting of two triples are shown in \cref{fig:order2-seals}.

\begin{figure}[ht]
    \centering
    \begin{tikzpicture}
    \begin{scope}[xshift=0cm]
      \node[vertex] (a1) at (0,0) {};
      \node[vertex] (a2) at (0.75,1.1) {};
      \node[vertex] (a3) at (1.5,0) {};
      \node at (0.75,0.4) {\small $T_1$};
      \node[vertex] (b1) at (3,0) {};
      \node[vertex] (b2) at (3.75,1.1) {};
      \node[vertex] (b3) at (4.5,0) {};
      \node at (3.75,0.4) {\small $T_2$};
      \node[vertex,label=above:{\small $v_i$}] (v) at (2.25,2.1) {};
      
      \draw[wild edge] (a1)--(a2)--(a3);
      \draw[wild edge] (b1)--(b2)--(b3);
      
      \draw[cedge] (a3) to[bend left=10] node[midway,below]{\scriptsize $c_i$} (b1);
      \draw[cedge] (a2) to[bend left=18] node[midway,above]{\scriptsize $c_i$} (b2);
      \draw[cedge] (a1) to[bend right=28] node[midway,below]{\scriptsize $c_i$} (b3);
    \end{scope}

    \begin{scope}[xshift=6.5cm]
      \node[vertex] (c1) at (0,0) {};
      \node[vertex] (c2) at (0.75,1.1) {};
      \node[vertex] (c3) at (1.5,0) {};
      \node at (0.75,0.4) {\small $T_1$};
      \node[vertex] (d1) at (3,0) {};
      \node[vertex] (d2) at (3.75,1.1) {};
      \node[vertex] (d3) at (4.5,0) {};
      \node at (3.75,0.4) {\small $T_2$};
      \node[vertex,label=above:{\small $v_i$}] (w) at (2.25,2.1) {};
      \draw[wild edge] (c1)--(c2)--(c3);
      \draw[wild edge] (d1)--(d2)--(d3);
      
      \draw[cedge] (c1) to[bend right=30] node[midway,below]{\scriptsize $c_i$} (c3);
      \draw[cedge] (d1) to[bend right=30] node[midway,below]{\scriptsize $c_i$} (d3);
      \draw[cedge] (c2) to[bend left=18] node[midway,above]{\scriptsize $c_i$} (d2);
    \end{scope}
\end{tikzpicture}
    \caption{The two seals, up to relabeling, of two triples for a color $c_i$.}
    \label{fig:order2-seals}
\end{figure}

\begin{proposition}\label{prop:seal}
Let $\rho$ be a minimum proper coloring of $K_n$ with $n$ odd and let $P  = \{B_1, \dots, B_t\}$ be a partition of $V(K_n)$ into blocks where $|B_j| \ge 3$ for $1 \le j \le t$. For each $j$, let $T_j$ be a spanning tree of $K_n[B_j]$, and let $W = \cup_{j=1}^t E(T_j)$. Then $W$ is color-connecting if and only if $P$ is seal-free.
\end{proposition}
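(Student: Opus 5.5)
We prove both directions by analyzing, for each color $c_i$, the connected components of $G_i^W$. The key observation is that $W$ makes each block internally connected. Hence every component of $G_i^W$ is a union of whole blocks, except possibly for the vertex $v_i$. The vertex $v_i$ lies in some block $B$, and it is joined to that block through the tree $T_B$. So in fact every component of $G_i^W$ is exactly a union of blocks. Consequently, $G_i^W$ is disconnected if and only if there is a nonempty proper subfamily $\mathcal{V}\subsetneq P$ whose union $V$ has no edge of $M_i$ leaving it.

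\textbf{(Seal $\Rightarrow$ not color-connecting.)} Suppose $(c_i,\mathcal{U})$ is a seal. By condition (ii), no edge of $M_i$ joins $U$ to $V(K_n)\setminus U$. The only other edges of $G_i^W$ are wild edges, and these lie inside blocks, so none of them crosses the boundary of $U$ either. By condition (i), $v_i\notin U$, so $U$ is a proper nonempty subset of $V(K_n)$. Therefore $U$ is a union of components of $G_i^W$, and $G_i^W$ is disconnected.

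\textbf{(Not color-connecting $\Rightarrow$ seal.)} Suppose $G_i^W$ is disconnected for some $i$. By the observation above, its components are unions of blocks, and there are at least two of them. Take $\mathcal{U}$ to be the set of blocks in any component that does not contain $v_i$; this family is nonempty because there are at least two components. Then $v_i\notin U$, so (i) holds. Every edge of $M_i$ with an endpoint in $U$ lies in the same component as that endpoint, so its other endpoint is also in $U$, which gives (ii). Since $v_i\notin U$ and $M_i$ covers every vertex except $v_i$, the matching $M_i$ restricted to $U$ is a perfect matching of $U$, confirming the equivalent form of (ii). Thus $(c_i,\mathcal{U})$ is a seal.

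\textbf{Main subtlety.} The main point needing care is the claim that components of $G_i^W$ are unions of blocks. This holds because $T_j$ spans $B_j$, so each block is connected within $G_i^W$. It is also why we must choose the component avoiding $v_i$: a component containing $v_i$ would violate (i).
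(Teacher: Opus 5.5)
Your proof is correct and follows essentially the same route as the paper: the wild spanning trees make every component of $G_i^W$ a union of whole blocks, a seal gives a cut that neither wild edges nor $c_i$-edges cross, and conversely the blocks on the side of a disconnection that avoids $v_i$ form a seal. The differences are cosmetic. You take a single component avoiding $v_i$ where the paper splits the blocks into two sides $\mathcal{A},\mathcal{B}$. Your aside that $v_i$ might be an exception to the union-of-blocks claim is unnecessary, since $v_i$ lies in a block like every other vertex.
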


\begin{proof}
First, suppose $P$ has a seal $(c_i, \mathcal{U})$, with $U = \bigcup_{B \in \mathcal{U}} B$ and $\overline{U} = V(K_n) - U$. We claim no edge of $G_i^W$, whose edge set is $M_i \cup W$, joins $U$ to its complement $\overline{U}$. The blocks of a partition are pairwise disjoint, so every block lies entirely in $U$ or entirely in $\overline{U}$. A wild edge $e \in W$ has both endpoints in a single block, and that block lies on one side, so no wild edge crosses the cut $(U, \overline{U})$; by condition~\ref{it:seal-closed} of \cref{def:seal}, no edge of $M_i$ crosses either. Both sides are nonempty; $U \neq \emptyset$ since $\mathcal{U} \neq \emptyset$ and $v_i \in \overline{U}$ since $v_i \notin U$. So $G_i^W$ is disconnected and $W$ is not color-connecting.

Conversely, suppose $W$ is not color-connecting, say $G_i^W$ is disconnected for the color $c_i$. Each block of $P$ is connected by its wild spanning tree, so every component of $G_i^W$ is a union of whole blocks. Hence the blocks of $P$ split into two nonempty sets $\mathcal{A}$ and $\mathcal{B}$ with no edge of $M_i$ between $A = \bigcup \mathcal{A}$ and $B = \bigcup \mathcal{B}$. Note that $M_i$ is a near-perfect matching missing exactly the vertex $v_i$, which lies in either $A$ or $B$, say $v_i \in A$. Then every vertex of $B$ is matched by $M_i$, and since no edge colored $c_i$ has exactly one endpoint in $B$, it has both endpoints in $B$. Thus $M_i$ restricted to $B$ is a perfect matching, and $(c_i, \mathcal{B})$ is a seal. Note that neither direction referred to the choice of spanning trees inside the blocks, so color-connectivity depends only on the partition.
\end{proof}

\begin{lemma}\label{lem:k4-rainbow}
Let $\rho$ be a minimum proper coloring of $K_n$ with $n \ge 7$ odd, and let $A \subseteq V(K_n)$ where $|A| = 4$. If $A$ is a block of a partition $P$, then $(c_i,\{A\})$ is not a seal for any $i$ if and only if $K_n[A]$ is a rainbow $K_4$.
\end{lemma}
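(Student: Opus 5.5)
We unpack \cref{def:seal} for the single block $\mathcal{U}=\{A\}$. The pair $(c_i,\{A\})$ is a seal exactly when $v_i\notin A$ and $M_i$ restricted to $A$ is a perfect matching of $A$. Since $|A|=4$, the second condition says that $A$ contains two disjoint edges of $K_n[A]$ of color $c_i$. So the plan is to show that $K_n[A]$ contains two disjoint edges of a common color $c_i$ with $v_i\notin A$ if and only if $K_n[A]$ is not rainbow. We then take the contrapositive on both sides.

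Suppose $K_n[A]$ is rainbow. Then no color appears twice among the six edges, so no color restricts to a perfect matching of $A$. Hence $(c_i,\{A\})$ is not a seal for any $i$.

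Conversely, suppose $K_n[A]$ is not rainbow. Then two edges of $K_n[A]$ share a color $c_i$. Because $\rho$ is proper, these edges are non-adjacent, so they are disjoint and cover all four vertices of $A$. Thus $M_i$ restricted to $A$ is a perfect matching, which gives condition~\ref{it:seal-closed}. Every vertex of $A$ is also incident to an edge of color $c_i$, whereas $v_i$ is by definition the vertex missed by $c_i$, so $v_i\notin A$. Therefore $(c_i,\{A\})$ is a seal.

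The only step needing care is the observation that a repeated color inside a four-vertex set automatically avoids $v_i$. This holds because a color class meeting all four vertices cannot miss any of them. The hypothesis $n\ge 7$ odd is needed only so that the $v_i$ are defined and the block framework of \cref{def:seal} applies; the argument itself does not use the size of $n$.
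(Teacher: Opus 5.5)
Your proposal is correct and follows essentially the same argument as the paper. By properness, a repeated color $c_i$ in $K_n[A]$ gives two disjoint edges covering $A$, which forces $v_i\notin A$ and makes $(c_i,\{A\})$ a seal. Conversely, a seal on $\{A\}$ is exactly two disjoint $c_i$-edges inside $A$, so it requires a repeated color.
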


\begin{proof}
    Since $\rho$ is proper, if some color $c_i$ appears twice in $K_n[A]$, the two edges colored $c_i$ are disjoint and cover $A$; hence $v_i\notin A$ and $(c_i, \{A\})$ is a seal. Conversely, since $|A|=4$, a seal $(c_i, \{A\})$ is a pair of disjoint edges colored $c_i$ between the vertices of $A$, so $c_i$ is a repeated color. Thus, a seal $(c_i, \{A\})$ exists for some $c_i$ if and only if $K_n[A]$ is not rainbow.
\end{proof}

\begin{lemma}\label{lem:complement-collapse}
    Let $\rho$ be a minimum proper coloring of $K_n$ with $n \ge 7$ odd, and let $P$ be a partition of $V(K_n)$ into blocks, one of which is a triple $T$. Let $\mathcal{U} = P \setminus \{T\}$. Then $(c_i, \mathcal{U})$ is a seal if and only if $T$ is invalid for $c_i$.
\end{lemma}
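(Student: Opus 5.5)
The plan is to unwind \cref{def:seal} for the specific set $U = V(K_n) \setminus T$ and compare it directly with the definition of a triple being invalid for $c_i$. Write $T = \{x,y,z\}$. Since $P$ is a partition, $U = \bigcup_{B\in\mathcal{U}} B$ is exactly the complement of $T$. Because $n \ge 7$, $U$ has at least four vertices, so $\mathcal{U}$ is nonempty. Thus $(c_i,\mathcal{U})$ is a seal if and only if two things hold: $v_i \in T$, and every edge of $M_i$ meeting $U$ lies inside $U$. The second condition is equivalent to saying that no edge of $M_i$ crosses between $T$ and $U$.

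For the forward direction, suppose $(c_i,\mathcal{U})$ is a seal. Then $v_i \in T$, say $v_i = x$. The vertices $y$ and $z$ are covered by $M_i$, because $M_i$ misses only $v_i$. By condition~\ref{it:seal-closed}, neither $y$ nor $z$ can be matched into $U$. Their $M_i$-partners therefore lie in $T$. Neither can be matched to $x = v_i$, since $v_i$ is unmatched in $c_i$. So $y$ and $z$ are matched to each other, giving $\rho(yz) = c_i$. With $T = \{v_i,y,z\}$, this says exactly that $T$ is invalid for $c_i$.

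For the converse, suppose $T$ is invalid for $c_i$. By definition this means $T = \{v_i,y,z\}$ with $\rho(yz) = c_i$. Condition (i) holds because $v_i \in T$, so $v_i \notin U$. For condition~\ref{it:seal-closed}, every vertex of $T$ is accounted for in $M_i$: $v_i$ is unmatched, and $y,z$ are matched to each other. Hence no edge of $M_i$ has exactly one endpoint in $T$, which is the same as saying no edge of $M_i$ has exactly one endpoint in $U$. Therefore $(c_i,\mathcal{U})$ is a seal.

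There is no real obstacle. The one point needing care is that the definition of ``invalid for $c_i$'' presupposes $v_i \in T$, and this matches condition (i) of a seal exactly. The only place the hypothesis $n \ge 7$ (indeed $n > 3$) enters is to guarantee that $\mathcal{U}$ is nonempty.
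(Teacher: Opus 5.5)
Your proof is correct and essentially the same as the paper's: both directions unwind the seal definition for $U = V(K_n)\setminus T$, using that $v_i$ is the only vertex missed by $M_i$. The only differences are minor. In the forward direction the paper counts edges ($(n-3)/2$ of the $(n-1)/2$ edges of $M_i$ perfectly match $U$, leaving one edge that must lie in $T\setminus\{v_i\}$), whereas you argue directly that the $M_i$-partners of the two non-$v_i$ vertices of $T$ can lie neither in $U$ nor at $v_i$; you also check explicitly that $\mathcal{U}\neq\emptyset$, which the paper leaves implicit.
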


\begin{proof}
Suppose $(c_i, \mathcal{U})$ is a seal. By definition, $v_i \notin U$, so $v_i \in T$. Since $|T| = 3$, the remaining set $U = V(K_n)\setminus T$ contains $n-3$ vertices. Since $M_i$ restricted to $U$ is a perfect matching of $U$, it uses $(n-3)/2$ edges of $M_i$, so exactly $(n-1)/2-(n-3)/2=1$ edge of $M_i$ remains. Since every vertex of $U$ is already matched, this edge has no endpoint in $U$. Thus it must have both endpoints in $T \setminus \{v_i\}$. It follows that $T$ contains the isolated vertex $v_i$ and its other two vertices are joined by an edge colored $c_i$, making $T$ invalid for $c_i$. 

Conversely, if $T$ is invalid for $c_i$, then $T = \{v_i, a, b\}$ with $\rho(ab) = c_i$. The remaining $(n-1)/2 - 1 = (n-3)/2$ edges of $M_i$ cover the remaining $n-3$ vertices of the graph, which are $V(K_n) \setminus T = U$. Those edges form a perfect matching of $U$. Thus, $(c_i, \mathcal{U})$ is a seal.
\end{proof}

\subsubsection{Small Cases}\label{sec:small-cases}
The following lemma allows us to bound the wild number of an even complete graph by an odd complete graph.

\begin{lemma}\label{lem:odd-even-transfer}
Let $k \geq 2$, let $\rho$ be a minimum proper coloring of $K_{2k}$, and let $v \in V(K_{2k})$. Define 
$$\rho_v:=\rho\rest{K_{2k}-v}.$$ Then $\rho_v$ is a minimum proper coloring of $K_{2k-1}$ and 
$$\wld_\rho(K_{2k}) \le \wld_{\rho_v}(K_{2k-1}).$$
\end{lemma}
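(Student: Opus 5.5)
We prove the two claims in turn: first that $\rho_v$ is a minimum proper coloring of $K_{2k-1}$, then that any color-connecting set for $(K_{2k-1},\rho_v)$ extends to one for $(K_{2k},\rho)$ without adding edges.

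\textbf{Step 1: $\rho_v$ is minimum proper.} In $\rho$ each of the $2k-1$ colors is a perfect matching $M_i$ of $K_{2k}$. Deleting $v$ removes exactly one edge from each $M_i$, namely the edge $vu_i$, where $u_i$ is the partner of $v$ in $M_i$. So each color survives as a near-perfect matching of $K_{2k}-v$ missing exactly $u_i$. Hence $\rho_v$ is surjective onto $2k-1=\chi'(K_{2k-1})$ colors. Since a restriction of a proper coloring is proper, $\rho_v$ is a minimum proper coloring. We also record that the isolated vertex of color $c_i$ under $\rho_v$ is $v_i=u_i$, the $\rho$-neighbor of $v$ in color $c_i$.

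\textbf{Step 2: transfer the wild set.} Let $W$ be an ideal wild set for $(K_{2k}-v,\rho_v)$, viewed as a subset of $E(K_{2k})$. We claim $W$ is color-connecting for $(K_{2k},\rho)$. Fix a color $c_i$. The graph $(K_{2k}-v)_i^W$, with edge set $(M_i\setminus\{vu_i\})\cup W$, is connected on $V(K_{2k})\setminus\{v\}$ by assumption. The graph $(K_{2k})_i^W$ contains all of these edges together with the edge $vu_i\in M_i$, which joins $v$ to the connected remainder. Hence $(K_{2k})_i^W$ is connected. As this holds for every color, $W$ is color-connecting for $\rho$, and therefore
\[
\wld_\rho(K_{2k})\le |W|=\wld_{\rho_v}(K_{2k-1}).
\]

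There is no real obstacle here. The only point requiring care is checking that every color of $\rho$ still appears after deleting $v$, so that $\rho_v$ uses all $2k-1$ colors and is surjective. This is exactly where $k\ge 2$ enters: each $M_i$ has $k\ge 2$ edges, so each color keeps $k-1\ge 1$ edges in $K_{2k}-v$.
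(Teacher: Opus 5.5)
Your proof is correct and takes the same approach as the paper. You restrict $\rho$ to get $2k-1$ near-perfect matchings of size $k-1$, then use the deleted edge $vu_i\in M_i$ to attach $v$ to the connected graph $(M_i\setminus\{vu_i\})\cup W$. Your explicit check that $k\ge 2$ keeps every color class nonempty, so that $\rho_v$ is surjective, is a small point the paper leaves implicit.
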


\begin{proof}
    The coloring $\rho$ uses $2k-1$ colors, and each color class is a perfect matching of size $k$. Deleting $v$ removes exactly one edge from each color class. Thus every color class of $\rho_v$ is a near-perfect matching of size $k-1$ and $\rho_v$ is a proper coloring of $K_{2k-1}$ using $2k-1$ colors, and so is minimum as $\chi'(K_{2k-1})=2k-1$.
    
    Let $W\subseteq E(K_{2k}-v)$ be color-connecting under $\rho_v$. Fix a color $c_i$ and let $e_i$ be the unique edge of $M_i$ incident to $v$. The graph $(M_i-e_i)\cup W$ is connected on $V(K_{2k}-v)$ and $e_i$ joins $v$ to a vertex of $(M_i-e_i)\cup W$. Hence $M_i\cup W$ is connected on $V(K_{2k})$.
    
    This holds for every color $c_i$, so $W$ is also color-connecting in $K_{2k}$. Therefore,
    $$
        \wld_\rho(K_{2k}) \le \wld_{\rho_v}(K_{2k-1}).\qedhere
    $$
\end{proof}

\begin{lemma}\label{lem:k7-sufficient}
    Let $\rho$ be a minimum proper coloring of $K_7$. Suppose that $T$ is a valid triple and that $B$ induces a rainbow $K_4$ and is disjoint from $T$. Then the union of the spanning trees of $K_7[T]$ and $K_7[B]$ is a color-connecting set of size $5$.
\end{lemma}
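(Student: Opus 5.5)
Since $T$ and $B$ are disjoint with $|T|=3$ and $|B|=4$, the pair $P=\{T,B\}$ is a partition of $V(K_7)$ into blocks of size at least $3$. The union $W$ of a spanning tree of $K_7[T]$ and one of $K_7[B]$ therefore has $2+3=5$ edges. By \cref{prop:seal}, $W$ is color-connecting if and only if $P$ is seal-free. So the whole task is to show that no seal $(c_i,\mathcal{U})$ exists, checking the three nonempty choices of $\mathcal{U}\subseteq P$.

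\emph{Case $\mathcal{U}=\{T\}$.} Here $|U|=3$ is odd, so $M_i$ cannot restrict to a perfect matching of $U$, and no seal arises (as remarked after \cref{def:seal}).

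\emph{Case $\mathcal{U}=\{B\}$.} This is a seal for some $c_i$ only if $K_7[B]$ is not rainbow, by \cref{lem:k4-rainbow} (which applies since $n=7\ge7$ is odd). Since $B$ induces a rainbow $K_4$ by hypothesis, no such seal exists.

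\emph{Case $\mathcal{U}=\{T,B\}$.} Then $U=V(K_7)$ contains every vertex, in particular $v_i$, violating condition (i) of \cref{def:seal}. (Alternatively, $|U|=7$ is odd.) Also note $\mathcal{U}=P\setminus\{T\}=\{B\}$ is exactly the complement case of \cref{lem:complement-collapse}: a seal there would require $T$ invalid for $c_i$, which is excluded since $T$ is valid. This gives a second, independent reason for the previous case.

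Thus $P$ is seal-free, and $W$ is a color-connecting set of size $5$. There is no real obstacle here; the only point requiring care is confirming that the case list is exhaustive, i.e. that $\mathcal{U}$ ranges over exactly the three nonempty subsets of a two-block partition. Combined with \cref{thm:proper-kn-lowerbound}, which gives $\floor{4\cdot4/3}=5$, this set is in fact ideal, though the statement only asks for color-connectivity and size.
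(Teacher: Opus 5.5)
Your proposal is correct and follows essentially the same route as the paper: you reduce to seal-freeness of $P=\{T,B\}$ via \cref{prop:seal}, discard the odd-sized unions $\{T\}$ and $\{T,B\}$, and exclude a seal on $\{B\}$ by \cref{lem:k4-rainbow}. Your extra remark is also correct: \cref{lem:complement-collapse} already excludes a seal on $\{B\}=P\setminus\{T\}$ from the validity of $T$ alone, so for $K_7$ the two hypotheses are equivalent, as the paper observes in the proof of \cref{thm:small-complete-prop-nums}.
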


\begin{proof}
    Note that $P= \{T,B\}$ is a block partition of $V(K_7)$ and $W$, consisting of a spanning tree of each block, has $|W| = (3-1) +(4-1) = 5$. By \cref{prop:seal}, $W$ is color-connecting if the partition $P$ admits no seals. Recall that a seal perfectly matches its vertices, requiring an even number of vertices. The only nonempty subset of $P$ whose union has even size is $\{B\}$. By \cref{lem:k4-rainbow}, $(c_i,\{B\})$ is a seal for some $c_i$ if and only if $K_7[B]$ is not rainbow. From our assumptions, we have that $K_7[B]$ is rainbow, so no seal has the form $(c_i,\{B\})$. Thus, $W$ is color-connecting.
\end{proof}

\begin{lemma}\label{lem:k9-existence}
    Let $\rho$ be a minimum proper coloring of $K_9$. Then we can partition $V(K_9)$ into three valid triples.
\end{lemma}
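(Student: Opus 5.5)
The plan is to count, using only the structural facts already established for minimum proper colorings of $K_n$ with $n$ odd. Here $\rho$ uses $9$ colors, each color class $M_i$ is a near-perfect matching of $4$ edges, and $c_i\mapsto v_i$ is a bijection. As in the proof of \cref{lem:valid-triple-existence}, every invalid triple has the form $\{v_i,x,y\}$ with $xy\in M_i$, and each edge witnesses exactly one such triple. So there are at most $\binom{9}{2}=36$ invalid triples among the $\binom{9}{3}=84$ triples.

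A naive union bound does not suffice. There are $280$ partitions of $V(K_9)$ into three triples, and each triple lies in $10$ of them, so the invalid triples destroy at most $360$ partitions, which exceeds $280$. I will therefore do a two-stage count.

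\textbf{Stage one.} Fix a valid triple $T$, which exists by \cref{lem:valid-triple-existence}. The complement $S=V(K_9)\setminus T$ has $6$ vertices. Its $10$ splittings into two triples are in bijection with the complementary pairs among its $20$ triples. A splitting fails only if one of its two triples is invalid, so the number of failing splittings is at most $I(T)$, the number of invalid triples contained in $S$. It therefore suffices to find a valid $T$ with $I(T)\le 9$.

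\textbf{Stage two.} Average $I(T)$ over valid triples $T$. An invalid triple $\{v_i,x,y\}$ lies in $S$ exactly when $T$ avoids all three of its vertices, which happens for $20$ choices of $T$. Hence $\sum_{T} I(T)\le 36\cdot 20=720$ over all $84$ triples, an average below $9$. The difficulty is that we must restrict to valid $T$, and the invalid $T$ could in principle carry small values of $I(T)$.

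To handle this, I will sharpen the count by bounding $\sum_{T \text{ invalid}} I(T)$ from below. Let $T=\{v_i,a,b\}$ be invalid with $ab\in M_i$. Then $S$ is perfectly matched by the other three edges of $M_i$, and each of the six colors $c_j$ with $v_j\in S$ still has edges inside $S$. I would use this to show that the invalid triples inside $S$ are numerous. Concretely, I would show that on a $6$-set an invalid triple $\{v_j,x,y\}$ arises from every edge $xy$ of color $c_j$ inside $S$ whose missing vertex $v_j$ also lies in $S$. One then counts how many of the $15$ edges of $K_9[S]$ have this property; properness caps each color at $3$ edges inside $S$.

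If the averaging is still too tight, the fallback is a direct exhaustive argument. Adjoining a vertex turns $\rho$ into a $1$-factorization of $K_{10}$, and these are finite in number up to isomorphism ($396$ classes). Checking each class for a partition into three valid triples would be a finite computation, in the same spirit as the paper's verification of small cases.

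I expect the main obstacle to be this second stage: controlling the correlation between validity of $T$ and the number of invalid triples in its complement. Pure counting is borderline, with an average of about $8.57$ against the threshold of $10$ before the restriction to valid $T$. The first thing I would try is to quantify precisely how an invalid $T$ forces extra invalid triples into its complement, via \cref{lem:complement-collapse}-type matching arguments, so that $\sum_{T \text{ valid}} I(T) < 10\cdot\#\{T \text{ valid}\}$.
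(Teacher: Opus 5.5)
Your Stage one is exactly the reduction the paper uses. You fix a valid triple $T$ and note that each invalid triple inside $S=V(K_9)\setminus T$ rules out exactly one of the $10$ splittings of $S$, so it suffices to have $I(T)\le 9$.

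The gap is Stage two, which you leave unresolved. You offer an averaging argument you yourself call borderline, an unspecified ``sharpening'', and a computer search over one-factorizations of $K_{10}$ that you never carry out. None of this is needed, because $I(T)\le 9$ holds for \emph{every} valid triple, by a direct count color by color. This count is the missing idea.
\begin{itemize}
\item Write $T=\{v_1,v_2,v_3\}$. A triple invalid for $c_i$ contains $v_i$, so colors $c_1,c_2,c_3$ give no invalid triples inside $S$.
\item The edge $v_jv_k$ of $T$ cannot have color $c_j$ or $c_k$, since those colors miss $v_j$ and $v_k$. Validity forbids $c_i$. By properness, the three edges of $T$ therefore carry three distinct colors, say $c_4,c_5,c_6$, whose missing vertices all lie in $S$.
\item For $i\in\{4,5,6\}$, $M_i$ has one edge inside $T$ and matches the third vertex of $T$ into $S$. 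So exactly $2$ of its $4$ edges lie inside $S$.
\item For $i\in\{7,8,9\}$, all three vertices of $T$ are matched into $S$. So exactly $1$ edge of $M_i$ lies inside $S$.
\item Each such edge witnesses one invalid triple, so $I(T)\le 3\cdot 2+3\cdot 1=9<10$. This is precisely how the paper concludes.
\end{itemize}

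The same computation shows that your proposed repair points the wrong way. Let $\mathrm{inv}(T)$ be the number of colors for which $T$ is invalid. For an arbitrary triple $T$, the number of edges inside $S$ witnessing invalid triples is exactly $9-\mathrm{inv}(T)$. The six colors with missing vertex in $S$ each contribute $1$. Those among them that color an edge of $T$, of which there are $3-\mathrm{inv}(T)$, contribute one more each. So invalid $T$ carry \emph{fewer} invalid triples in their complement, not extra ones. Trying to show that an invalid $T$ forces additional invalid triples into $S$ would fail. A crude lower bound such as $I(T)\ge 6$ for invalid $T$ does not rescue the averaging either once there are $30$ or more invalid triples. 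Your concern about correlation with validity is real, but it is moot: every valid $T$ sits at exactly $9$.
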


\begin{proof}
    By \cref{lem:valid-triple-existence}, there is a valid triple; relabel so that it is $T=\{v_1,v_2,v_3\}$. Each edge $v_jv_k$ of $T$ is not colored $c_j$ or $c_k$, since $v_j$ and $v_k$ are isolated in those colors, and is not colored $c_i$, since $T$ is valid. As $\rho$ is proper, the three edges of $T$ receive distinct colors, which we relabel $c_4,c_5,c_6$. It remains to partition the remaining six vertices into two valid triples. There are $\binom{6}{3}/2=10$ ways to partition the remaining six vertices into two unordered triples.
    
    We now bound the number of partitions that do not produce the remaining two valid triples. Since $v_1,v_2,v_3\in T$, no invalid triple among the remaining six vertices can arise from colors $c_1,c_2,c_3$. For $\{c_4,c_5,c_6\}$, one matching edge lies inside $T$, and the third vertex of $T$ is matched outside $T$, leaving two matching edges among the remaining six vertices. Thus, at most $2$ remaining edges of each color can contribute invalid triples. For $\{c_7,c_8,c_9\}$, no matching edge lies inside $T$, so the three vertices of $T$ are matched to three vertices outside $T$, leaving one matching edge among the remaining six vertices. Thus, at most $1$ remaining edge of each color can contribute invalid triples. Therefore, among the remaining six vertices, there are at most $3\cdot 2+3\cdot 1=9$ invalid triples.
    
    Each invalid triple determines at most one partition of the remaining six vertices that does not produce the two remaining valid triples, namely the partition consisting of that triple and its complement. Therefore, at most $9$ of the $10$ partitions are invalid. Hence at least one partition of the remaining six vertices consists of two valid triples. $T$ together with these two valid triples gives a partition of $V(K_9)$ into three valid triples.
\end{proof}

\begin{lemma}\label{lem:validity-sufficient}
    Let $\rho$ be a minimum proper coloring of $K_9$. Suppose that $T_1,T_2,T_3\subseteq V(K_9)$ are valid triples such that $T_1, T_2, T_3$ are pairwise disjoint. Then the union of the spanning trees of $K_9[T_1], K_9[T_2]$, and $K_9[T_3]$ is a color-connecting set of size $6$.
\end{lemma}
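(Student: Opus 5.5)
We prove \cref{lem:validity-sufficient} by the same method as \cref{lem:k7-sufficient}, using \cref{prop:seal} to reduce color-connectivity to seal-freeness. The sets $T_1,T_2,T_3$ are pairwise disjoint triples in a $9$-vertex set, so $P=\{T_1,T_2,T_3\}$ is a partition of $V(K_9)$ into blocks of size $3$. The set $W$ consists of a spanning tree of each triple, so $|W|=3\cdot 2=6$. By \cref{prop:seal}, $W$ is color-connecting if and only if $P$ admits no seal $(c_i,\mathcal{U})$. It therefore suffices to rule out every nonempty $\mathcal{U}\subseteq P$ for every color.

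Next we sort the candidates for $\mathcal{U}$ by parity. A seal requires $|U|$ to be even, since $M_i$ restricted to $U$ must be a perfect matching. Each block has size $3$, so a union of one or three blocks has odd size and cannot support a seal. In particular, $\mathcal{U}=P$ is excluded; this is also clear because $v_i\in V(K_9)=U$ would violate condition (i). The only remaining candidates are the three pairs $\mathcal{U}=P\setminus\{T_j\}$ for $j=1,2,3$.

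For each such pair we apply \cref{lem:complement-collapse} with $n=9$ and the triple $T=T_j$. It states that $(c_i,P\setminus\{T_j\})$ is a seal if and only if $T_j$ is invalid for $c_i$. Each $T_j$ is a valid triple by hypothesis, so it is invalid for no color, and no such seal exists. Hence $P$ is seal-free, and by \cref{prop:seal} $W$ is a color-connecting set of size $6$.

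I expect no step to pose a real obstacle. The only care needed is to check that \cref{lem:complement-collapse} applies directly, which it does since $n=9\ge 7$ is odd and $T_j$ is a triple block of $P$. We should also note that validity of $T_j$ as a set, per the definition, means it is invalid for no color $c_i$, and this is exactly what the lemma needs.
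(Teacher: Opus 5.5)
Your proposal is correct and follows the paper's proof essentially step for step. Both reduce to seal-freeness via \cref{prop:seal}, use parity to leave only the pairs $P\setminus\{T_j\}$ as candidate seals, and dispose of those with \cref{lem:complement-collapse} and the validity of each $T_j$.
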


\begin{proof}
    Note that $P = \{T_1, T_2, T_3\}$ is a block partition of $V(K_9)$, and by \cref{prop:seal}, it suffices to rule out the existence of seals. The union of blocks in a seal contains an even number of vertices, so $\mathcal{U}$ must consist of exactly two triples. By \cref{lem:complement-collapse}, a seal $(c_i, P\setminus\{T_j\})$ exists exactly when $T_j$ is invalid for $c_i$. Let $W$ be the union of the spanning trees of $K_9[T_1], K_9[T_2]$, and $K_9[T_3]$. All three triples are valid, so $P$ is seal-free and $W$ is color-connecting with $|W| = 3(3-1)=6$.
\end{proof}

\begin{lemma}\label{lem:k11-existence}
    Let $\rho$ be a minimum proper coloring of $K_{11}$. Then we can partition $V(K_{11})$ into a valid triple $T$ and two blocks that each induce a rainbow $K_4$.
\end{lemma}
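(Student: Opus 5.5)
The plan is to mirror the counting argument of \cref{lem:k9-existence}: fix a valid triple first, and then count how many ways of splitting the remaining eight vertices into two $4$-sets can fail.

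\textbf{Step 1: fix a valid triple.} By \cref{lem:valid-triple-existence} (with $n=11\ge 7$), $K_{11}$ has a valid triple. Relabel so that $T=\{v_1,v_2,v_3\}$. As in the proof of \cref{lem:k9-existence}, the edges of $T$ cannot be colored $c_1,c_2,c_3$. They are distinct because $\rho$ is proper, so relabel their colors as $c_4,c_5,c_6$. Let $R=V(K_{11})\setminus T$, with $|R|=8$. There are $\binom{8}{4}/2=35$ ways to split $R$ into two unordered $4$-sets.

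\textbf{Step 2: describe the bad $4$-sets.} By the argument of \cref{lem:k4-rainbow} (a properly colored $K_4$ fails to be rainbow exactly when some color appears on two disjoint edges), a $4$-set $A\subseteq R$ fails to induce a rainbow $K_4$ iff $A$ is the vertex set of two disjoint edges of some common color $c_i$ lying inside $R$. So it suffices to count, for each color, the unordered pairs of $M_i$-edges inside $R$. Each color class $M_i$ has $5$ edges.
\begin{itemize}
\item For $i\in\{1,2,3\}$: $v_i\in T$ and the other two vertices of $T$ are not joined by $c_i$, since $T$ is valid. So two edges of $M_i$ leave $T$, and $3$ edges of $M_i$ lie in $R$, giving $\binom32=3$ pairs.
\item For $i\in\{4,5,6\}$: one edge of $M_i$ lies in $T$ and the third vertex of $T$ is matched into $R$. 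The vertex $v_i$ lies in $R$ and is unmatched. This leaves $6$ vertices of $R$ matched internally, i.e.\ $3$ edges and $3$ pairs.
\item For $i\in\{7,\dots,11\}$: all three vertices of $T$ are matched into $R$ and $v_i\in R$. This leaves $4$ vertices, i.e.\ $2$ edges and $1$ pair.
\end{itemize}
So there are at most $9+9+5=23$ non-rainbow $4$-sets in $R$.

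\textbf{Step 3: conclude.} A split $\{A,R\setminus A\}$ fails only if one of its parts is non-rainbow. Each non-rainbow $4$-set determines exactly one split (itself together with its complement), so at most $23<35$ splits fail. Hence some split gives two rainbow $K_4$ blocks $B_1,B_2$, and $\{T,B_1,B_2\}$ is the required partition.

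The main point needing care is Step 2: correctly accounting for where $v_i$ lies and how many matching edges leave $T$ for each color type. The total only needs to stay below $35$, so there is ample slack even if the count is somewhat loose.
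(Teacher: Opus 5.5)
Your proposal is correct and follows essentially the same argument as the paper. You fix a valid triple $T=\{v_1,v_2,v_3\}$ whose edges are colored $c_4,c_5,c_6$, find $3$, $3$, and $2$ edges of each color class among the remaining eight vertices for the color groups $c_1,\dots,c_3$, $c_4,\dots,c_6$, and $c_7,\dots,c_{11}$, and conclude that at most $18+5=23<35$ splits are bad; your explicit tracking of where $v_i$ sits for each color group is just a slightly more detailed version of the paper's count.
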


\begin{proof}
    By \cref{lem:valid-triple-existence}, there is a valid triple; relabel so that it is $T=\{v_1,v_2,v_3\}$. Each edge $v_jv_k$ of $T$ is not colored $c_j$ or $c_k$, since $v_j$ and $v_k$ are isolated in those colors, and is not colored $c_i$, since $T$ is valid. As $\rho$ is proper, the three edges of $T$ receive distinct colors, which we relabel $c_4,c_5,c_6$. It remains to partition the remaining eight vertices into two sets, each inducing a rainbow $K_4$. There are $\binom{8}{4}/2=35$ partitions of these eight vertices into two sets of size $4$.

    For the colors $\{c_1,c_2,c_3\}$, one vertex of $T$ is missed, while the other two vertices of $T$ are matched to vertices outside $T$, since $T$ is valid. Thus, two edges of each such color are incident to $T$, leaving \(5-2=3\) edges among the remaining eight vertices. For $\{c_4,c_5,c_6\}$, one edge of each color lies inside $T$, while the remaining vertex of $T$ is matched to a vertex outside $T$. Thus, again, two edges of each such color are incident to $T$, leaving $5-2=3$ edges among the remaining eight vertices. For $\{c_7,\dots,c_{11}\}$, no edge lies inside $T$ and no vertex of $T$ is missed, so all three vertices of $T$ are matched to vertices outside $T$. Thus, there are $5-3=2$ edges of each such color among the remaining eight vertices.

    We now bound the number of partitions that do not produce two disjoint rainbow $K_4$. For $\{c_1, \dots, c_6\}$, there are $3$ matching edges among the remaining eight vertices. A $4$-set on which color $c_i$ appears twice is the vertex set of a pair of these edges, so each such color yields at most $\binom{3}{2}=3$ non-rainbow $4$-sets, for at most $6 \cdot 3=18$ in total. For $\{c_7, \dots, c_{11}\}$, there are exactly two matching edges among the remaining eight vertices, and these determine a unique non-rainbow $K_4$. Thus, these five colors produce at most $5$ additional $K_4$ that are not rainbow. Each $K_4$ that is not rainbow determines a unique unordered partition of the remaining eight vertices into two sets of size $4$. Since the total number of bad partitions is at most $18+5=23<35$, we have at least one partition into two disjoint rainbow $K_4$.
\end{proof}

\begin{lemma}\label{lem:k11-sufficient}
    Let $\rho$ be a minimum proper coloring of $K_{11}$. Suppose that $T \subseteq V(K_{11})$ is a valid triple and that $B_1,B_2 \subseteq V(K_{11})$ each induce a rainbow $K_4$ such that $T, B_1, B_2$ are pairwise disjoint. Then the union of the spanning trees of $K_{11}[T], K_{11}[B_1]$, and $K_{11}[B_2]$ is a color-connecting set of size $8$. 
\end{lemma}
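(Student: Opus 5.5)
We will follow the template of \cref{lem:k7-sufficient,lem:validity-sufficient}: reduce color-connectivity to the absence of seals via \cref{prop:seal}, and then rule out every candidate seal using either \cref{lem:k4-rainbow} or \cref{lem:complement-collapse}. Set $P=\{T,B_1,B_2\}$. Since $|T|=3$, $|B_1|=|B_2|=4$, and $3+4+4=11$, the pairwise disjoint sets $T,B_1,B_2$ partition $V(K_{11})$, and every block has at least three vertices. So $P$ is a block partition. Let $W$ be the union of the chosen spanning trees. Then $|W|=(3-1)+(4-1)+(4-1)=8$, and by \cref{prop:seal} $W$ is color-connecting if and only if $P$ is seal-free.

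The first step is a parity filter. For any seal $(c_i,\mathcal{U})$, the union $U$ is perfectly matched by $M_i$, so $|U|$ is even. Among the seven nonempty subfamilies of $P$, those containing $T$ have union of size $3$, $7$, $7$, or $11$, all odd. So the only candidates are $\{B_1\}$, $\{B_2\}$, and $\{B_1,B_2\}$.

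The second step handles the single blocks. Since each $B_j$ induces a rainbow $K_4$, \cref{lem:k4-rainbow} (applicable because $n=11\ge 7$ is odd) shows that $(c_i,\{B_j\})$ is not a seal for any color $c_i$.

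The third step handles $\{B_1,B_2\}=P\setminus\{T\}$. By \cref{lem:complement-collapse}, $(c_i,P\setminus\{T\})$ is a seal if and only if $T$ is invalid for $c_i$. Since $T$ is a valid triple, it is invalid for no color, so this candidate fails as well. Hence $P$ is seal-free and $W$ is a color-connecting set of size $8$.

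There is no real obstacle here. The only point needing care is the parity enumeration, to confirm that no odd-sized subfamily, including all of $P$, can form a seal. That check reduces to $|U|$ being even for every seal, as noted after \cref{def:seal}.
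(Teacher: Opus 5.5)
Your proposal is correct and follows the paper's proof essentially step for step. It reduces to seal-freeness via \cref{prop:seal} and uses parity to leave only $\{B_1\}$, $\{B_2\}$, and $\{B_1,B_2\}$. It then rules these out with \cref{lem:k4-rainbow} and \cref{lem:complement-collapse}, respectively.
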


\begin{proof}
     Note that $P = \{T, B_1, B_2\}$ is a block partition of $V(K_{11})$, and by \cref{prop:seal}, it suffices to rule out the existence of seals. Recall that a seal requires an even number of vertices, so we need only consider the sets $\{B_1\}$, $\{B_2\}$, and $\{B_1, B_2\}$. By \cref{lem:k4-rainbow}, a seal on $\{B_1\}$ or $\{B_2\}$ exists exactly when the corresponding $K_4$ is not rainbow. By \cref{lem:complement-collapse}, a seal on $\{B_1, B_2\} = P \setminus \{T\}$ exists exactly when $T$ is invalid. So $P$ is seal-free when $T$ is valid and $K_{11}[B_1], K_{11}[B_2]$ are rainbow. Let $W$ be the union of the spanning trees of $K_{11}[T], K_{11}[B_1]$, and $K_{11}[B_2]$, so $|W|=(3-1)+(4-1)+(4-1)=8$. By \cref{prop:seal}, it follows that $W$ is color-connecting when $T$ is valid and $K_{11}[B_1], K_{11}[B_2]$ are rainbow. 
\end{proof}

\begin{theorem} \label{thm:small-complete-prop-nums}
    Let $3 \le n \le 12$ and let $\rho $ be a minimum proper coloring of $K_{n}$. Then 
    $$ \wld_\rho(K_{n})=\left\lfloor\frac{4\lceil n/2\rceil}{3}\right\rfloor.
    $$
\end{theorem}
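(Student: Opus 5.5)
The lower bound $\wld_\rho(K_n)\ge\floor{4\ceil{n/2}/3}$ is exactly \cref{thm:proper-kn-lowerbound}, so it remains to exhibit, for each $3\le n\le 12$ and every minimum proper coloring $\rho$, a color-connecting set of that size. The target values are $2,2,4,4,5,5,6,6,8,8$ for $n=3,\dots,12$. By \cref{lem:odd-even-transfer}, the even case $n=2k$ reduces to the odd case $n=2k-1$, since $\floor{4\ceil{2k/2}/3}=\floor{4\ceil{(2k-1)/2}/3}$. Thus it suffices to treat $n\in\{3,5,7,9,11\}$. The case $n=4$ would follow from $n=3$, and is consistent with the bound for $K_4$ since the lemma requires $k\ge 2$.

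For $n=7,9,11$ the work is already done. When $n=7$, \cref{lem:valid-triple-existence} gives a valid triple $T$. I then need a rainbow $K_4$ on the complementary four vertices $B$, and \cref{lem:k7-sufficient} gives a set of size $5$. When $n=9$, \cref{lem:k9-existence} and \cref{lem:validity-sufficient} give size $6$. When $n=11$, \cref{lem:k11-existence} and \cref{lem:k11-sufficient} give size $8$.

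The $K_7$ case needs one extra step: showing that some valid triple has a rainbow complement. By \cref{lem:k4-rainbow}, $B$ fails to be rainbow iff some color $c_i$ with $v_i\in T$ has its two remaining matching edges inside $B$. Equivalently, $T=\{v_i,x,y\}$ with $xy\in M_i$, which makes $T$ invalid for $c_i$. So the complement of a valid triple is automatically rainbow. I would spell this out carefully.

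Concretely, suppose $c_i$ appears twice in $K_7[B]$. Those two edges cover $B$, so $v_i\notin B$ and $v_i\in T$. The third edge of $M_i$ must then lie inside $T\setminus\{v_i\}$, which means $T$ is invalid for $c_i$. This contradicts validity, so $B$ is rainbow.

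The small cases $n=3$ and $n=5$ are done by hand.

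For $n=3$, each color is a single edge, and a path on the three vertices works with $2$ wild edges. Each color's edge plus the path is connected.

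For $n=5$, the target is $4$, meaning a spanning tree on all five vertices, as forced by \cref{lem:proper-structure}. Take $W$ to be a spanning tree of $K_5$ itself, a single block $P=\{V\}$. Its only nonempty union is $V$, which contains every $v_i$, so there is no seal. Hence \cref{prop:seal} makes $W$ color-connecting. Indeed, any spanning tree trivially connects every $G_i^W$, so this is immediate without the proposition.

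The same remark settles the general principle: the only real difficulty is achieving the lower bound exactly. That happens only for $n\ge7$, where the existence lemmas do the counting. The main obstacle is therefore the $K_7$ rainbow-complement step. Everything else is assembly of earlier results plus checking that the floor expression matches each constructed size.
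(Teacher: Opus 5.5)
Your proposal is correct and follows the paper's proof essentially step for step. You use \cref{thm:proper-kn-lowerbound} for the lower bound, the trivial spanning-tree bound for $n=3,5$, the existence and sufficiency lemmas for $n=7,9,11$, and \cref{lem:odd-even-transfer} for even $n$; your direct argument that the complement of a valid triple in $K_7$ is rainbow is the same fact the paper obtains by combining \cref{lem:complement-collapse} with \cref{lem:k4-rainbow}.
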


\begin{proof}
    Given a minimum proper coloring $\rho$ of $K_n$, \cref{thm:proper-kn-lowerbound} establishes that $$\wld_\rho(K_n) \ge \left\lfloor\frac{4\lceil n/2\rceil}{3}\right\rfloor.$$ We show that this lower bound is sharp for $3 \le n \le 12$. We first consider odd $n$ with $3 \le n \le 11$. 
    
    For $n \in \{3,5\}$, the general bound $\wld_\rho(K_n)\le n-1$ from \cref{eq:def-bound} gives $$ \wld_\rho(K_3) \le 2 \text{ and } \wld_\rho(K_5) \le 4, $$ which equal the respective lower bounds.

    For $n=7$, by \cref{lem:valid-triple-existence}, there exists a valid triple $T\subseteq V(K_7)$. Let
    $B = V(K_7)\setminus T$, so $|B|=4$. By \cref{lem:complement-collapse}, a seal of the form $(c_i,\{B\})$ exists if and only if $T$ is invalid. Since $T$ is valid, the partition $P=\{T,B\}$ is seal-free. Equivalently, by \cref{lem:k4-rainbow}, $K_7[B]$ is rainbow. Choosing a spanning tree in each block forms a color-connecting set of size $(3-1)+(4-1)=5$ by \cref{lem:k7-sufficient}, so 
    $$\wld_\rho(K_7) \le 5 .$$

    For $n=9$, by \cref{lem:k9-existence}, there exists a partition of $V(K_9)$ into three disjoint valid triples $T_1, T_2, T_3$. By \cref{lem:validity-sufficient}, the union of the spanning trees of $K_9[T_1],K_9[T_2]$, and $K_9[T_3]$ forms a color-connecting set $W$ of size $|W| = 3 \cdot 2 = 6$. Thus, $$\wld_\rho(K_9) \le 6.$$

    For $n=11$, by \cref{lem:k11-existence}, we have a partition of $V(K_{11})$ into $T,B_1,B_2$ where $T$ is a valid triple and $B_1,B_2$ induce rainbow $K_4$. By \cref{lem:k11-sufficient}, $T,B_1,B_2$ form the components of a color-connecting set $W$ for $K_{11}$ with $|W|=8$. Thus, $$\wld_\rho(K_{11}) \le 8.$$ These inequalities agree with the respective lower bounds, completing the proof for odd $n$ with $3 \le n \le 11$.
    
    We now consider even $n$ with $4 \le n \le 12$ . Choose $v\in V(K_n)$, then, by \cref{lem:odd-even-transfer}, the restriction $\rho_v$ is a minimum proper coloring of $K_{n-1}$ and $$\wld_\rho(K_n) \le \wld_{\rho_v}(K_{n-1}) =
    \left\lfloor \frac{4\lceil(n- 1)/2\rceil}{3} \right\rfloor = \left\lfloor \frac{4\lceil n/2\rceil}{3} \right\rfloor, $$ where the last equality holds since $n$ is even. By the odd cases established above, the lower bound completes the proof for even $n$ with $4 \le n \le 12$.
\end{proof}

\subsubsection{Weak Upper Bound}

\begin{lemma}\label{lem:crossing-edges-triple}
Let $\rho$ be a minimum proper coloring of $K_n$ with $n$ odd and let $T \subseteq V(K_n)$ with $|T| = 3$.
    \begin{enumerate}[label=(\roman*)]
        \item\label{it:triple-i} If $v_i \notin T$, then $K_n$ has an edge of color $c_i$ with exactly one endpoint in $T$.
    
        \item\label{it:triple-ii} If $T = \{v_i, a, b\}$, then $K_n$ has an edge of color $c_i$ with exactly one endpoint in $T$ if and only if $\rho(ab) \neq c_i$.
    \end{enumerate}
\end{lemma}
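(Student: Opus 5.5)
The plan is a parity and counting argument on the near-perfect matching $M_i$. When $n$ is odd, $M_i$ covers every vertex except $v_i$. Fix $T$ with $|T|=3$ and a color $c_i$. Let $T'$ be the set of vertices of $T$ covered by $M_i$, so $T' = T$ if $v_i\notin T$ and $T' = T\setminus\{v_i\}$ otherwise. Each vertex of $T'$ is covered by exactly one edge of $M_i$, and that edge either stays inside $T$ or has exactly one endpoint in $T$. An edge of $M_i$ inside $T$ covers two vertices of $T'$, since $v_i$ is incident to no edge of color $c_i$. Consequently
$$|T'| = 2e_{\mathrm{in}} + e_{\mathrm{cross}},$$
where $e_{\mathrm{in}}$ is the number of $M_i$-edges with both endpoints in $T$ and $e_{\mathrm{cross}}$ is the number with exactly one endpoint in $T$. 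Both parts of the lemma are read off from this identity.

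For \ref{it:triple-i}, we have $|T'|=3$, which is odd. Hence $e_{\mathrm{cross}}$ is odd, in particular at least $1$, so some edge of color $c_i$ has exactly one endpoint in $T$. It is also worth recording that, because $M_i$ is a matching and $|T|=3$, $e_{\mathrm{in}}\le 1$; thus $e_{\mathrm{cross}}\in\{1,3\}$. This is not needed, but it is consistent with the count.

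For \ref{it:triple-ii}, write $T=\{v_i,a,b\}$, so $T'=\{a,b\}$ and $2=2e_{\mathrm{in}}+e_{\mathrm{cross}}$. The only possible $M_i$-edge inside $T$ is $ab$, because $v_i$ meets no edge of color $c_i$. If $\rho(ab)=c_i$, then $e_{\mathrm{in}}=1$ and $e_{\mathrm{cross}}=0$, so no edge of color $c_i$ leaves $T$. If $\rho(ab)\ne c_i$, then $e_{\mathrm{in}}=0$ and $e_{\mathrm{cross}}=2$. In that case $a$ and $b$ are each matched by $M_i$ to a vertex outside $T$, which gives the required edge. This proves the equivalence.

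There is no real obstacle. The one point needing care is that $v_i$ contributes nothing to the count: it is isolated in color $c_i$, so it neither covers an $M_i$-edge inside $T$ nor sends one across the cut. Oddness of $n$ is used exactly once, to guarantee that $M_i$ misses only the single vertex $v_i$ and therefore covers every other vertex of $T$.
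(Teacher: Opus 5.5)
Your proof is correct and follows essentially the same route as the paper: both count the $M_i$-edges at the vertices of $T$ covered by $M_i$. Part (ii) is handled identically, since $ab$ is the only possible $M_i$-edge inside $T$. The only difference is cosmetic and occurs in part (i): you use the parity of $|T'| = 2e_{\mathrm{in}} + e_{\mathrm{cross}}$, whereas the paper notes that a $3$-set contains at most one edge of the matching $M_i$.
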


\begin{proof}
Since $n$ is odd, $M_i$ is a near-perfect matching covering every vertex except $v_i$. We seek an edge colored $c_i$ with exactly one endpoint in $T$.

To prove~(i), suppose $v_i \notin T$, so all three vertices of $T$ are matched by $M_i$. Since a set of three vertices can contain at most one edge of the matching $M_i$, at least one vertex of $T$ is matched by $M_i$ to a vertex outside $T$.

To prove~(ii), suppose $T = \{v_i, a, b\}$. The vertex $v_i$ is isolated in $M_i$, while each of $a$ and $b$ is incident to exactly one edge of $M_i$. These edges have both of their endpoints in $T$ if and only if $a$ and $b$ are
adjacent to each other, that is, $ab \in M_i$, or $\rho(ab) = c_i$. If $\rho(ab) \neq c_i$, at least one of $a, b$ shares an edge colored $c_i$ with a vertex outside of $T$. If instead $\rho(ab) = c_i$, then $ab$ is the only edge colored $c_i$ incident to $T$, so there is no edge colored $c_i$ with exactly one endpoint in $T$.
\end{proof}

\begin{theorem} \label{thm:proper-odd-upper}
    Let $n\ge7$ be odd and let $\rho$ be a minimum proper coloring of $K_n$. Then
    $$\wld_\rho(K_n)\le n-2.$$
\end{theorem}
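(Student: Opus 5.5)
The plan is to exhibit a color-connecting set with exactly two tree components: a spanning tree on a valid triple $T$ and a spanning tree on its complement $B = V(K_n)\setminus T$. Such a set has $(3-1) + (n-3-1) = n-2$ edges, so it suffices to show that the block partition $P = \{T, B\}$ is seal-free and then apply \cref{prop:seal}.

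\emph{Choosing the triple.} Since $n \ge 7$ is odd, \cref{lem:valid-triple-existence} gives a valid triple $T \subseteq V(K_n)$. Set $B = V(K_n)\setminus T$. Then $|B| = n-3 \ge 4$, so both blocks have at least three vertices and $P$ is an admissible block partition in the sense of \cref{def:seal}.

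\emph{Ruling out seals.} A seal $(c_i,\mathcal{U})$ requires $|U|$ to be even. It also requires $v_i \notin U$, so $U \neq V(K_n)$. The nonempty subsets of $P$ are $\{T\}$, $\{B\}$ and $P$ itself:
\begin{itemize}
\item $\{T\}$ is excluded because $|T| = 3$ is odd;
\item $P$ is excluded because its union $V(K_n)$ contains every $v_i$;
\item $\{B\} = P \setminus \{T\}$ is the only candidate.
\end{itemize}
By \cref{lem:complement-collapse}, $(c_i, \{B\})$ is a seal if and only if $T$ is invalid for $c_i$. Since $T$ is valid, no such seal exists, so $P$ is seal-free. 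One could equally verify the cut condition directly with \cref{lem:crossing-edges-triple}: each color has an edge with exactly one endpoint in $T$. Part (i) handles colors $c_i$ with $v_i \notin T$, and part (ii) together with validity handles the three colors with $v_i \in T$.

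\emph{Conclusion.} By \cref{prop:seal}, the union $W$ of a spanning tree of $K_n[T]$ and a spanning tree of $K_n[B]$ is color-connecting. Since $|W| = 2 + (n-4) = n-2$, we get $\wld_\rho(K_n) \le n-2$.

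There is no real obstacle here: the only substantive inputs are the existence of a valid triple, which needs $n \ge 7$, and the seal characterization of the complement block. The one thing to check is that the parity argument leaves $\{B\}$ as the only possible seal.
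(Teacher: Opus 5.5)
Your proposal is correct and follows the paper's proof: it uses the same construction, a spanning tree on a valid triple $T$ and a spanning tree on its complement $B$, for $n-2$ wild edges in total. The only difference is that the paper checks each color directly with \cref{lem:crossing-edges-triple} (the alternative you mention), while your main route goes through \cref{prop:seal} and \cref{lem:complement-collapse}; these encode the same cut condition, so the difference is only in how the verification is organized.
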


\begin{proof}
    By \cref{lem:valid-triple-existence}, there exists a valid triple $T\subseteq V(K_n)$. Let
    $B=V(K_n)\setminus T$.
    Choose a spanning tree of $K_n[T]$ and a spanning tree of $K_n[B]$, and let $W$ be the union of their edges. Then
    $$|W| = (|T|-1)+(|B|-1) = 2+(n-4) = n-2.$$
    
    Fix a color $c_i$. If $v_i\notin T$, then \cref{lem:crossing-edges-triple}\ref{it:triple-i} gives an edge of color $c_i$ between the vertices of $T$ and $B$. If $v_i\in T$, then the fact that $T$ is a valid triple together with \cref{lem:crossing-edges-triple}\ref{it:triple-ii} gives an edge of color $c_i$ between the vertices of $T$ and $B$.
    
    The wild edges make $T$ and $B$ each connected, while the edge of color $c_i$ guaranteed by \cref{lem:crossing-edges-triple} joins the two wild components. Therefore, $G_i^W$ is connected. Since $c_i$ was arbitrary, $W$ is color-connecting and we have 
    $$
        \wld_\rho(K_n)\le n-2.\qedhere
    $$
\end{proof}

\begin{corollary}\label{cor:proper-even-upper}
    Let $n\ge8$ be even, and let $\rho$ be a minimum proper coloring of $K_n$. Then
    $$
        \wld_\rho(K_n)\le n-3.
    $$
\end{corollary}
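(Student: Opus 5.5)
The plan is to reduce to the odd case through \cref{lem:odd-even-transfer}, combined with \cref{thm:proper-odd-upper}. Write $n=2k$ with $k\ge 4$, since $n\ge 8$ is even. Fix any vertex $v\in V(K_n)$ and let $\rho_v=\rho\rest{K_{n}-v}$.

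By \cref{lem:odd-even-transfer}, $\rho_v$ is a minimum proper coloring of $K_{n-1}$, and
$$\wld_\rho(K_n)\le \wld_{\rho_v}(K_{n-1}).$$
The order $n-1=2k-1$ is odd and at least $7$, so \cref{thm:proper-odd-upper} applies to $\rho_v$ and gives
$$\wld_{\rho_v}(K_{n-1})\le (n-1)-2=n-3.$$
Chaining the two inequalities yields $\wld_\rho(K_n)\le n-3$.

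The only point needing care is the hypothesis check. We need $k\ge 2$ for the transfer lemma and $n-1\ge 7$ odd for the odd upper bound, and both follow from $n\ge 8$ even. There is no real obstacle here; the corollary is a direct composition of the two earlier results.
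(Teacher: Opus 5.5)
Your proposal is correct and is essentially the paper's own proof: fix a vertex $v$, use \cref{lem:odd-even-transfer} to get $\wld_\rho(K_n)\le\wld_{\rho_v}(K_{n-1})$, then apply \cref{thm:proper-odd-upper} because $n-1\ge 7$ is odd. Your hypothesis checks, $k\ge 2$ for the transfer lemma and $n-1\ge 7$ odd for the odd bound, are exactly the ones needed.
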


\begin{proof}
    Let $v \in V(K_n)$. Combining \cref{lem:odd-even-transfer} with \cref{thm:proper-odd-upper}, since $n-1\ge7$ is odd, we have 
    $$
        \wld_\rho(K_n)
        \le\wld_{\rho_v}(K_{n-1}) \le (n-1)-2 = n-3.\qedhere
    $$
\end{proof}

\subsubsection{Performance of the Greedy Algorithm}

We conclude our study of minimum proper colorings of $K_n$ considering Algorithm~5.10 of \cite{WildNumber}, an algorithm that produces a color-connecting set. Given a set $W$ of edges already colored wild, the \emph{dip number} of an edge $e$ is $$\delta_W(e) = \sum_{i=1}^\ell \bigl(\kappa(G_i^W)-\kappa(G_i^{W \cup \{e\}})\bigr),$$ the number of colors $c_i$ for which coloring $e$ wild merges two components of $G_i^W$. The greedy algorithm repeatedly selects an edge of maximum dip number until the selected set is color-connecting, breaking ties with the potential rule of \cite{WildNumber}; our arguments do not depend on how ties are broken.

While the output of Algorithm~5.10 is not necessarily ideal, the color-connecting set produced is always a forest.

\begin{lemma}\label{lem:greedy-acyclic}
    Let $\rho$ be a minimum proper coloring of $K_n$, and let
    $\Wg$ be the output of Algorithm~5.10 of \cite{WildNumber}. Then $\Wg$ is acyclic. Consequently, $|\Wg|\le n-1$.
\end{lemma}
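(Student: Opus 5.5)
We show that every edge the greedy algorithm selects has positive dip number and joins two distinct components of the current wild graph $(V(K_n),W)$. Acyclicity then follows immediately. The bound $|\Wg|\le n-1$ follows because an acyclic edge set on $n$ vertices has at most $n-1$ edges.

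\textbf{Step 1: a selected edge has positive dip number.} Suppose $W$ is the current set and is not yet color-connecting. Then some $G_i^W$ is disconnected. Since $K_n$ is complete, some edge $e$ joins two components of $G_i^W$, and coloring $e$ wild reduces $\kappa(G_i^W)$ by one. So $\delta_W(e)\ge 1$, and the maximum dip number is at least $1$. Hence every edge the algorithm selects, whichever tie-breaking rule is used, satisfies $\delta_W(e)\ge 1$.

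\textbf{Step 2: an edge closing a cycle has dip number zero.} Suppose $e=xy$ has both endpoints in the same component of the graph with edge set $W$. Then for every color $c_i$, the vertices $x$ and $y$ are joined by a path in $W\subseteq E(G_i^W)$. Adding $e$ therefore merges no components, so $\kappa(G_i^{W\cup\{e\}})=\kappa(G_i^W)$ for every $i$. Summing over colors gives $\delta_W(e)=0$.

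\textbf{Step 3: induction.} By Steps 1 and 2, each selected edge joins two distinct components of the current wild forest, so by induction on the number of selected edges the wild set remains a forest. We also check that edges already in $W$ are never reselected: for $e\in W$ we have $G_i^{W\cup\{e\}}=G_i^W$, so $\delta_W(e)=0$ and Step 1 excludes it. This matters because $W$ is treated as a set.

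The only delicate point is Step 1. We must make sure the algorithm never selects a zero-dip edge while $W$ is not yet color-connecting. Completeness of $K_n$ guarantees this, since any pair of vertices is an edge. Note that Steps 2 and 3 do not use properness of $\rho$; the argument works for any coloring of $K_n$.
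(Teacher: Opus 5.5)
Your proof is correct and follows the paper's argument essentially step for step. Completeness guarantees an edge of positive dip number whenever $W$ is not yet color-connecting, an edge whose endpoints are already joined by $W$ has dip number zero, and so every selected edge joins two components of the current wild forest; your remark that properness of $\rho$ is never used is accurate, since the paper's proof does not genuinely use it either, and connectivity of the host graph would in fact suffice for Step~1.
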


\begin{proof}
    Suppose the current selected set of edges is $W$ and the algorithm has not yet terminated. Then there is a color $c_i$ for which $G_i^W$ is disconnected. Since $K_n$ is complete, there is an edge $uv$ joining two distinct components of $G_i^W$.
    
    The edge $uv$ cannot have color $c_i$, otherwise it would already belong to $M_i$, contradicting the assumption that its endpoints lie
    in distinct components of $G_i^W$. Thus, coloring $uv$ wild merges two components of $G_i^W$, so $uv$ has a positive dip number.
    
    Therefore, whenever the algorithm has not terminated, an edge of positive dip number exists. Since the algorithm selects an edge of maximum dip number, it never selects an edge of dip number zero.
    
    We claim that an edge of positive dip number joins two distinct components of the current selected wild set $W$. Suppose instead that the endpoints $u$ and $v$ of an edge were already connected by a path in $W$. Since $W\subseteq E(G_i^W)$ for every color $c_i$, the vertices $u$ and $v$ would already be in the same component of $G_i^W$ for every color. Adding $uv$ would therefore not merge monochromatic components, so its dip number would be zero.
    
    Thus, every edge selected by the algorithm joins two distinct components of the current selected set. Adding such an edge cannot create a cycle. Since the algorithm begins with an empty set, $\Wg$ is acyclic.
    
    As a forest on at most $n$ vertices, it has at most $n-1$ edges. Therefore, $|\Wg|\le n-1$.
\end{proof}

\begin{lemma}\label{lem:color-saturation}
    Let $W$ be a color-connecting set for $K_n$. If $M_i\subseteq W$ for some color $c_i$, then the edges of $W$ form a connected spanning subgraph of $K_n$. Consequently, $|W|\ge n-1$.
\end{lemma}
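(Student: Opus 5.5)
The approach is to exploit the fact that $G_i = M_i$ is a matching, so adding $W$ to it cannot connect anything that $W$ itself leaves disconnected, except through single matching edges. The context is a minimum proper coloring, so $M_i$ is the color class of $c_i$. Since $M_i \subseteq W$, we have $E(G_i^W) = M_i \cup W = W$. So the spanning subgraph with edge set $W$ is exactly $G_i^W$.

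Because $W$ is color-connecting, $G_i^W$ is connected for this color $c_i$. It is also a spanning subgraph of $K_n$ by definition. Hence the edges of $W$ form a connected spanning subgraph of $K_n$.

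A connected spanning subgraph on $n$ vertices contains a spanning tree, so it has at least $n-1$ edges. This gives $|W| \ge n-1$.

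I do not expect any real obstacle here. The one point to make explicit is that ``the edges of $W$ form a connected spanning subgraph'' means the spanning subgraph $(V(K_n), W)$, which includes any vertices not covered by $W$. That subgraph coincides with $G_i^W$, and connectivity of $G_i^W$ also rules out uncovered vertices when $n \ge 2$. The argument does not use properness beyond the notation $M_i$ for the color class.
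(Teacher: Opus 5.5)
Your proof is correct and matches the paper's argument: $M_i \subseteq W$ gives $E(G_i^W) = M_i \cup W = W$, this graph is connected because $W$ is color-connecting, and any connected graph on $n$ vertices has at least $n-1$ edges. Your opening remark that $M_i$ is a matching is never used, as you note yourself at the end.
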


\begin{proof}
    Since $W$ is color-connecting, the spanning subgraph $G_i^W$ is connected. By hypothesis, $M_i \subseteq W$, so $E(G_i^W)=M_i \cup W=W$. Every connected graph on $n$ vertices has at least $n-1$ edges, so $|W|\ge n-1$.
\end{proof}

\begin{theorem}
\label{thm:greedy-bounds}
    Let $\rho$ be a minimum proper coloring of $K_n$ and let
    $\Wg$ be the output of Algorithm~5.10 of
    \cite{WildNumber}. For $n \ge 3$, we have
    $$
        \frac{|\Wg|}{\wld_\rho(K_n)}
        \le
        \frac{n-1}{
            \left\lfloor
                \frac{4\lceil n/2\rceil}{3}
            \right\rfloor
        },
    $$
    and
    $$
        0
        \le
        |\Wg|-\wld_\rho(K_n)
        \le
        n-1-
        \left\lfloor
            \frac{4\lceil n/2\rceil}{3}
        \right\rfloor.
    $$
    In particular, this upper bound on the approximation ratio tends to
    $3/2$ as $n$ tends to infinity.
\end{theorem}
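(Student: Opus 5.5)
The plan is to combine two facts already proved in the paper: the upper bound $|\Wg|\le n-1$ from \cref{lem:greedy-acyclic}, and the lower bound $\wld_\rho(K_n)\ge \floor{4\ceil{n/2}/3}$ from \cref{thm:proper-kn-lowerbound}. Write $L(n)=\floor{4\ceil{n/2}/3}$. For $n\ge 3$ we have $\ceil{n/2}\ge 2$, so $L(n)\ge 2>0$, and the ratio is well defined.

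The first step is to check that $\Wg$ is a color-connecting set; this holds by definition of the algorithm's termination condition. Since $\wld_\rho(K_n)$ is the minimum size of such a set, $|\Wg|\ge \wld_\rho(K_n)$, which gives the left inequality $0\le |\Wg|-\wld_\rho(K_n)$.

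Next, combine the two bounds:
\[
|\Wg|-\wld_\rho(K_n)\le (n-1)-L(n)
\qquad\text{and}\qquad
\frac{|\Wg|}{\wld_\rho(K_n)}\le \frac{n-1}{L(n)}.
\]
The ratio bound follows because the numerator is at most $n-1$ and the denominator is at least $L(n)>0$.

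Finally, compute the limit. Since $\ceil{n/2}=n/2+O(1)$, we get $L(n)=\tfrac{4}{3}\cdot\tfrac{n}{2}+O(1)=\tfrac{2n}{3}+O(1)$. Therefore
\[
\frac{n-1}{L(n)}\to\frac{1}{2/3}=\frac{3}{2}.
\]

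There is no real obstacle here. The only points needing care are confirming that $L(n)$ is positive for all $n\ge3$, and stating explicitly that the output of the greedy algorithm is color-connecting, so that the minimality of $\wld_\rho(K_n)$ applies.
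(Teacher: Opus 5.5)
Your proposal is correct and matches the paper's proof: you combine $|\Wg|\le n-1$ from the acyclicity lemma with the lower bound $\wld_\rho(K_n)\ge \floor{4\ceil{n/2}/3}$, use minimality of $\wld_\rho(K_n)$ to get $|\Wg|-\wld_\rho(K_n)\ge 0$, and read off the limit $3/2$ from $\floor{4\ceil{n/2}/3}=2n/3+O(1)$. Your explicit check that the denominator is at least $2$ for $n\ge 3$ is a small detail that the paper leaves implicit.
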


\begin{proof}
    By \cref{lem:greedy-acyclic}, $|\Wg|\le n-1$. By \cref{thm:proper-kn-lowerbound},
    $
        \wld_\rho(K_n)
        \ge
        \left\lfloor
            \frac{4\lceil n/2\rceil}{3}
        \right\rfloor.
    $
    Combining these inequalities gives
    $$
        \frac{|\Wg|}{\wld_\rho(K_n)}
        \le
        \frac{n-1}{
            \left\lfloor
                \frac{4\lceil n/2\rceil}{3}
            \right\rfloor
        }.
$$

Since $\Wg$ is color-connecting and
$\wld_\rho(K_n)$ is the minimum size of a color-connecting set, we have $|\Wg|-\wld_\rho(K_n) \ge 0$. The preceding upper and lower bounds also give
$$|\Wg|-\wld_\rho(K_n) \le n-1- \left\lfloor \frac{4\lceil n/2\rceil}{3} \right\rfloor.$$

Finally, since $\floor{4\ceil{n/2}/3}
    =
    2n/3+O(1)$, as $n \rightarrow \infty$, we have 
    $$
        \frac{n-1}{
        \left\lfloor
            \frac{4\lceil n/2\rceil}{3}
        \right\rfloor
        } \longrightarrow
        \frac{3}{2}.\qedhere
    $$
\end{proof}

The following corollary identifies color-class saturation as a
structural failure of the greedy algorithm. When saturation
occurs, the algorithm returns a spanning tree, which is the largest
possible acyclic output, even though a smaller color-connecting set exists whenever $n\ge7$.

\begin{corollary}
\label{cor:greedy-saturation}
    Let $\rho$ be a minimum proper coloring of $K_n$, and suppose that Algorithm~5.10 of \cite{WildNumber} saturates a color
    class, that is,
    $M_i\subseteq \Wg$
    for some color $c_i$. Then
    $$
        |\Wg|=n-1.
    $$
    In that case, $\Wg$ is not ideal whenever $n\ge7$. More precisely,
    $$
        |\Wg|-\wld_\rho(K_n)
        \ge
        \begin{cases}
            1, & n\ge7\text{ odd},\\
            2, & n\ge8\text{ even}.
        \end{cases}
    $$
\end{corollary}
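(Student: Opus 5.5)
The plan is to combine \cref{lem:color-saturation} with \cref{lem:greedy-acyclic} to get the exact size, and then compare with the upper bounds \cref{thm:proper-odd-upper} and \cref{cor:proper-even-upper}.

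\textbf{Step 1: the size of the output.} Since $\Wg$ is color-connecting and $M_i\subseteq \Wg$, \cref{lem:color-saturation} shows that the edges of $\Wg$ form a connected spanning subgraph of $K_n$. In particular, $|\Wg|\ge n-1$. By \cref{lem:greedy-acyclic}, $\Wg$ is acyclic, so it is a spanning tree and $|\Wg|=n-1$.

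\textbf{Step 2: the gap for odd $n$.} For odd $n\ge7$, \cref{thm:proper-odd-upper} gives $\wld_\rho(K_n)\le n-2$. Hence
\[
|\Wg|-\wld_\rho(K_n)\ge (n-1)-(n-2)=1 .
\]

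\textbf{Step 3: the gap for even $n$.} For even $n\ge8$, \cref{cor:proper-even-upper} gives $\wld_\rho(K_n)\le n-3$. Hence the difference is at least $(n-1)-(n-3)=2$.

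In both cases the difference is positive, so $\Wg$ is not ideal whenever $n\ge7$. Here $n=7$ is covered by the odd case; the even case starts at $n=8$, which is exactly the range of \cref{cor:proper-even-upper}.

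The only subtle point is Step 1. \Cref{lem:color-saturation} gives only a lower bound on $|\Wg|$, and it is acyclicity that turns this into equality. We do not need to worry whether saturation can actually occur, since the statement is conditional on it.
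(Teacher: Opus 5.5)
Your proposal is correct and follows the paper's proof essentially step for step. \cref{lem:color-saturation} and \cref{lem:greedy-acyclic} pin down $|\Wg|=n-1$, and \cref{thm:proper-odd-upper} and \cref{cor:proper-even-upper} then give gaps of at least $1$ for odd $n\ge7$ and at least $2$ for even $n\ge8$.
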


\begin{proof}
    By \cref{lem:color-saturation}, $|\Wg|\ge n-1$. By \cref{lem:greedy-acyclic}, $|\Wg|\le n-1$. Therefore, $|\Wg|=n-1$.
    
    Suppose that $n\ge7$ is odd. By
    \cref{thm:proper-odd-upper}, $\wld_\rho(K_n)\le n-2$.
    Thus,
    $$|\Wg|-\wld_\rho(K_n) \ge (n-1)-(n-2) = 1.
    $$
    
    Now suppose that $n\ge8$ is even. By
    \cref{cor:proper-even-upper}, $\wld_\rho(K_n)\le n-3$.
    Therefore,
    $$
        |\Wg|-\wld_\rho(K_n)
        \ge
        (n-1)-(n-3)
        =
        2.\qedhere
    $$
\end{proof}

\section{Complete Bipartite Graphs}\label{sec:complete-bipartite}

As in the previous section, we focus primarily on equitable $2$-colorings and minimum proper colorings. Throughout, $V_m$ and $V_n$ denote the parts of $K_{m,n}$, with $|V_m|=m$ and $|V_n|=n$, and we write $K_{x,0}$ and $K_{0,x}$ for a subset of $x$ isolated vertices of $V_m$ and $V_n$, respectively. First, we make an observation about subgraphs of complete bipartite graphs.

\begin{lemma}\label{lem:bip-max-edges}
    Let $G$ be a spanning subgraph of $K_{m,n}$, and define
    \[
        V_A=\{v\in V_m: d_G(v)\ge 1\} \quad\text{and}\quad
        V_B=\{v\in V_n: d_G(v)\ge 1\}.
    \]
    Let $A=|V_A|$ and $B=|V_B|$. Then $|E(G)|\le A(B-\kappa^*(G)+1)$.
\end{lemma}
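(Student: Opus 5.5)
Let $k=\kappa^*(G)$ and let $H_1,\dots,H_k$ be the nontrivial components of $G$. The plan is to bound the edges component by component and then use the fact that each component needs at least one vertex on each side.

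Since $G$ is bipartite with parts $V_m$ and $V_n$, every nontrivial component $H_j$ has at least one endpoint in each part. Write $a_j=|V(H_j)\cap V_m|\ge 1$ and $b_j=|V(H_j)\cap V_n|\ge 1$. Every vertex of positive degree lies in some nontrivial component, and the isolated vertices lie in none. Hence
\[
\sum_{j=1}^k a_j=A \quad\text{and}\quad \sum_{j=1}^k b_j=B.
\]

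Each $H_j$ is a subgraph of $K_{a_j,b_j}$, so $|E(H_j)|\le a_jb_j$. Summing, and using $a_j\le A$,
\[
|E(G)|\le \sum_{j=1}^k a_jb_j\le A\cdot\max_j b_j .
\]
Since the $b_j$ are positive integers summing to $B$, we get $\max_j b_j\le B-(k-1)$. Therefore $|E(G)|\le A(B-k+1)$, as claimed.

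There is no real obstacle. The only point needing care is that every nontrivial component meets both parts, which gives $b_j\ge1$ and hence $\max_j b_j\le B-k+1$. One could alternatively use the sharper observation that, for fixed sums, $\sum a_jb_j$ is maximized when everything is concentrated in a single component, but the crude bound $a_j\le A$ already suffices. If $G$ has no edges, then $k=0$ and the bound is trivially $0\le A(B+1)$, with $A=B=0$.
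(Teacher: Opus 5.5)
Your proof is correct and matches the paper's argument essentially step for step: take the bipartition sizes $(a_j,b_j)$ of the nontrivial components, bound $|E(G)|\le\sum_j a_jb_j$, and use that every nontrivial component meets $V_n$ to get $b_j\le B-\kappa^*(G)+1$. One wording fix: the step $\sum_j a_jb_j\le A\max_j b_j$ follows from $\sum_j a_j=A$ together with $b_j\le\max_j b_j$, not from $a_j\le A$, which on its own only gives $\sum_j a_jb_j\le AB$.
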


\begin{proof}
    Let $(a_1,b_1), \dots, (a_{\kappa^*(G)},b_{\kappa^*(G)})$ be the bipartition sizes of each nontrivial component of $G$ where $a_i$ is the number of vertices belonging to $V_A$ and $b_i$ is the number of vertices belonging to $V_B$. Since the nontrivial components partition $V_A \cup V_B$, we have 
    $$
        \sum_{i=1}^{\kappa^*(G)} a_i=A\quad\text{ and }\quad \sum_{i=1}^{\kappa^*(G)} b_i=B.
    $$
    Note that $|E(G)|\le \sum_{i=1}^{\kappa^*(G)}a_ib_i$. Since every edge of $G$ has an endpoint in $V_n$, each nontrivial component of $G$ contains at least one vertex of $V_B$. For all $i\in [\kappa^*(G)]$, we then have that $b_i\le B-(\kappa^*(G)-1)=B-\kappa^*(G)+1$ and so 
    $$
        |E(G)|\le \sum_{i=1}^{\kappa^*(G)}a_ib_i\le (B-\kappa^*(G)+1)\sum_{i=1}^{\kappa^*(G)}a_i=A(B-\kappa^*(G)+1).\qedhere
    $$
\end{proof}

\subsection{Equitable $2$-Colorings}\label{sec:bip-eq2}
An equitable $2$-coloring of $K_{m,n}$ has two color classes containing $\floor{mn/2}$ and $\ceil{mn/2}$ edges. We obtain a sharp upper bound on $\wld(K_{m,n})$ for such a coloring by considering the possible structures of disconnected color classes. We first construct a family of equitable $2$-colorings $\varepsilon_{2^*}$ (see \cref{fig:kmn-eq2}).

\begin{construction}\label{con:eq2bip}
    Let $2 \le m \le n$. We construct an equitable $2$-coloring $\varepsilon_{2^*}$ such that every color class has $\floor{n/2}+1$ connected components.
    
    For even $n$, partition the vertices of $V_n$ into two sets of size $n/2$. Color all edges incident to the first set with $c_1$ and all edges incident to the second set with $c_2$. Each color class therefore induces a copy of $K_{m,n/2}$. In each color, the remaining $n/2$  vertices in $V_n$ are isolated while the vertices incident to edges of each color form a single nontrivial component. Thus, each color class has $n/2+1$ connected components.
    
    For odd $n$, partition the vertices of $V_n$ into two sets of size $\floor{n/2}$ and one remaining vertex. Color all edges incident to the first set with $c_1$ and all edges incident to the second set with $c_2$. For the remaining vertex in $V_n$, partition the edges equitably between $c_1$ and $c_2$. Consequently, the color classes contain $\floor{mn/2}$ and $\ceil{mn/2}$ edges, respectively. In each color, the remaining $\floor{n/2}$ vertices in $V_n$ are isolated while the vertices incident to edges of each color form a single nontrivial component. Thus, each color class has $\floor{n/2}+1$ connected components.
\end{construction}

From \eqref{eq:two-colors}, for either parity of $n$, we have $$\wld_{\varepsilon_{2^*}}(K_{m,n})=2\Big(\floor{n/2}+1\Big)-2= 2\floor{n/2}.$$

\begin{figure}[ht]
    \centering
    \begin{minipage}[t]{0.48\textwidth}
    \centering
    \begin{tikzpicture}[baseline=(current bounding box.north), scale=.9, every node/.style={circle, fill=black, inner sep=1.5pt}]
    
    \node[label=left:$a_1$] (a1) at (0,6) {};
    \node[label=left:$a_2$] (a2) at (0,5) {};
    \node[draw=none, fill=none] at (0,4) {$\vdots$};
    \node[label=left:$a_{m-1}$] (am1) at (0,3) {};
    \node[label=left:$a_m$] (am) at (0,2) {};
    
    \node[label=right:$b_1$] (b1) at (4,6) {};
    \node[draw=none, fill=none] at (4,5) {$\vdots$};
    \node[label=right:$b_{n/2}$] (bfloor) at (4,4) {};
    \node[label=right:$b_{n/2+1}$] (bceil) at (4,3) {};
    \node[draw=none, fill=none] at (4,2) {$\vdots$};
    \node[label=right:$b_n$] (bn) at (4,1) {};
    
    \foreach \x in {a1,a2,am1,am}{
      \foreach \y in {b1,bfloor}{
        \draw[red,thick] (\x) -- (\y);
      }
    }
    
    \foreach \x in {a1,a2,am1,am}{
      \foreach \y in {bceil,bn}{
        \draw[blue,thick] (\x) -- (\y);
      }
    }
    
    \node[draw=none, fill=none] at (0,6.7) {$V_m$};
    \node[draw=none, fill=none] at (4,6.7) {$V_n$};
    
    \node[draw=none, fill=none] at (2,7.3) {$K_{m,n}$, $n$ even};
    
    \end{tikzpicture}
\end{minipage}%
\hfill
\begin{minipage}[t]{0.48\textwidth}
    \centering
    \begin{tikzpicture}[baseline=(current bounding box.north), scale=.9, every node/.style={circle, fill=black, inner sep=1.5pt}]
        \node[label=left:$a_1$] (a1) at (0,6) {};
        \node[label=left:$a_2$] (a2) at (0,5) {};
        \node[draw=none, fill=none] at (0,4) {$\vdots$};
        \node[label=left:$a_{m-1}$] (am1) at (0,3) {};
        \node[label=left:$a_m$] (am) at (0,2) {};
        
        \node[label=right:$b_1$] (b1) at (4,6) {};
        \node[draw=none, fill=none] at (4,5) {$\vdots$};
        \node[label=right:$b_{\lceil n/2\rceil}$] (bceil) at (4,4) {};
        \node[draw=none, fill=none] at (4,3) {$\vdots$};
        \node[label=right:$b_n$] (bn) at (4,2) {};
        
        \foreach \x in {a1,a2,am1,am}{
          \foreach \y in {b1}{
            \draw[red,thick] (\x) -- (\y);
          }
        }
        
        \draw[red,thick] (a1) -- (bceil);
        \draw[red,thick] (a2) -- (bceil);
        \draw[blue,thick] (am1) -- (bceil);
        \draw[blue,thick] (am) -- (bceil);
        
        \foreach \x in {a1,a2,am1,am}{
          \foreach \y in {bn}{
            \draw[blue,thick] (\x) -- (\y);
          }
        }
        
        \node[draw=none, fill=none] at (0,6.7) {$V_m$};
        \node[draw=none, fill=none] at (4,6.7) {$V_n$};
        
        \node[draw=none, fill=none] at (2,7.3) {$K_{m,n}$, $n$ odd};
    \end{tikzpicture}
\end{minipage}
    \caption{The equitable $2$-colorings of $K_{m,n}$ attaining $\wld=2\floor{n/2}$.}
    \label{fig:kmn-eq2}
\end{figure}

\begin{theorem}\label{thm:bip-eq2}
    For any equitable $2$-coloring of the edges of $K_{m,n}$ with $2\le m\le n$,
    $$\wld(K_{m,n})\le 2\floor{n/2}.$$ Moreover, this bound is sharp.
\end{theorem}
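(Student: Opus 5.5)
The plan is to bound each color class separately. By \eqref{eq:two-colors}, $\wld(K_{m,n})=(\kappa(G_1)-1)+(\kappa(G_2)-1)$, so it suffices to show that any spanning subgraph $G$ of $K_{m,n}$ with $|E(G)|\ge\floor{mn/2}$ satisfies $\kappa(G)\le\floor{n/2}+1$. Both color classes of an equitable $2$-coloring have at least $\floor{mn/2}$ edges, so this gives the upper bound. Sharpness is already provided by \cref{con:eq2bip}, where $\wld_{\varepsilon_{2^*}}(K_{m,n})=2\floor{n/2}$.

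\textbf{Step 1: reduction to one component.} Let the nontrivial components of $G$ have bipartition sizes $(a_j,b_j)$ for $1\le j\le k=\kappa^*(G)$, as in the proof of \cref{lem:bip-max-edges}. Then
\[
\kappa(G)=k+(m-\textstyle\sum_j a_j)+(n-\sum_j b_j)
\quad\text{and}\quad
|E(G)|\le\sum_j a_jb_j .
\]
I will merge all nontrivial components into a single block of sizes $a=\sum_j a_j$ and $b=\sum_j b_j$. This does not decrease the edge capacity, since $ab\ge\sum_j a_jb_j$. Each merge of two components reduces the component count by exactly $1$. In exchange, I will isolate vertices to recover the count: if $k\ge2$, I remove one vertex from the block, which raises the count by at least $1$ and loses at most $\max(a,b)$ capacity.

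I expect the cleanest route to be a direct inequality rather than this exchange. Each nontrivial component has $a_j,b_j\ge1$, so $a_j+b_j-1\ge1$. This gives $\kappa(G)\le 1+m+n-\bigl(a+b\bigr)+(k-1)$, and I will bound $(k-1)$ using $\sum_j a_jb_j\le ab-(\text{cross terms})$. Concretely, I expect to show that
\[
\sum_j a_jb_j\le (a-k+1)(b-k+1)+(k-1)
\]
(the extreme case is one big component plus $k-1$ single edges). After that I compare with the all-isolated configuration $a'=a-k+1$, $b'=b-k+1$, which has at least as many components. This reduces the problem to $G$ having exactly one nontrivial component of sizes $a\le m$, $b\le n$ with $ab\ge\floor{mn/2}$. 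In that case $\kappa(G)=1+m+n-a-b$.

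\textbf{Step 2: minimize $a+b$.} It remains to show that $a+b\ge m+\ceil{n/2}$ whenever $ab\ge\floor{mn/2}$, $1\le a\le m$ and $1\le b\le n$. This yields $\kappa(G)\le 1+\floor{n/2}$. If $a=m$, then $b\ge\ceil{\floor{mn/2}/m}$, which is $\ceil{n/2}$ up to a small rounding check. In general, AM--GM gives $a+b\ge 2\sqrt{ab}\ge\sqrt{4\floor{mn/2}}$. Meanwhile $(m+n/2)^2\ge 2mn$, again by AM--GM, with equality exactly when $m=n/2$. So the real-valued bound matches, and the constraint $a\le m$ handles the regime where $\sqrt{mn/2}>m$.

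\textbf{Main obstacle.} I expect the main difficulty to be the integer rounding in Step 2. The issue is showing $a+b\ge m+\ceil{n/2}$ exactly, not just $a+b\ge\sqrt{2mn}$, when $n$ is odd or $m\approx n/2$. My first attempt would be a case split on $a=m$ versus $a\le m-1$. When $a\le m-1$, I use $b\ge\floor{mn/2}/(m-1)$ and check that $a+b$ is decreasing in $a$ on the relevant range, so the worst case is $a=m$. I will also double-check the component-merging inequality in Step 1 for very small blocks with $a_j=1$ or $b_j=1$.
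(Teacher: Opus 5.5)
There is a genuine gap: the statement you reduce to is false. A spanning subgraph of $K_{m,n}$ with $\floor{mn/2}$ edges can have more than $\floor{n/2}+1$ components. Already for $m=n=3$, color the four edges of a $K_{2,2}$ on $\{a_1,a_2\}\times\{b_1,b_2\}$ with $c_1$ and the other five edges with $c_2$. This coloring is equitable, and $\kappa(G_1)=3>2=\floor{3/2}+1$. The theorem still holds here, with equality, only because $\kappa(G_2)=1$. For $m=n$ large, taking $a=b\approx n/\sqrt{2}$ gives $\kappa(G_1)\approx(2-\sqrt{2})n$, well above $n/2$.

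The error is in Step 2. The inequality $(m+n/2)^2\ge 2mn$ says the AM--GM lower bound $\sqrt{2mn}$ lies \emph{below} your target $m+\ceil{n/2}$, so it proves nothing in the direction you need. Also, $a+\floor{mn/2}/a$ is minimized near $a=\sqrt{mn/2}$, which is strictly less than $m$ whenever $n<2m$, so $a=m$ is not the worst case. Step 1 has a smaller leak as well: your merged configuration only has capacity $(a-k+1)(b-k+1)\ge |E(G)|-(k-1)$. By contrast, \cref{lem:bip-max-edges} gives $|E(G)|\le A(B-\kappa^*(G)+1)$, which reduces losslessly to $K_{A,\,B-\kappa^*(G)+1}$ plus isolated vertices, with exactly $\kappa(G)$ components.

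The missing idea is that you cannot bound the two color classes independently; you must use that they are complementary. Suppose some $G_i$ has an isolated vertex $v\in V_m$ and an isolated vertex $w\in V_n$. In the other class, $v$ is adjacent to all of $V_n$ and $w$ to all of $V_m$, so the other class is connected. You then only need the weaker bound $\kappa(G_i)\le 2\floor{n/2}+1$. That bound does follow from a correct one-block reduction plus AM--GM: $\kappa(G_i)\le m+n+1-2\sqrt{\floor{mn/2}}$, where $2\sqrt{\floor{mn/2}}\ge m$ for $n$ even and $\ge m+1$ for $n$ odd.

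Otherwise each $G_i$ covers all of $V_m$ or all of $V_n$, and only then does your per-class bound hold. If $A_i=m$, then $\kappa(G_i)=\kappa^*(G_i)+n-B_i\le n+1-\floor{mn/2}/m$, giving $\kappa(G_i)\le\floor{n/2}+1$. Symmetrically, $\kappa(G_i)\le\floor{m/2}+1$ when $B_i=n$. This case split is exactly what the paper's proof does. Your sharpness argument via \cref{con:eq2bip} is fine.
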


\begin{proof}
Let $G=K_{m,n}$ with an equitable $2$-coloring. We will prove that $\sum_{i=1}^2 (\kappa(G_i)-1)\le 2\floor{n/2}$. 

If $G_1$ and $G_2$ are connected, then $\sum_{i=1}^2 (\kappa(G_i)-1) =0\le 2\floor{n/2}$. We now consider cases where at least one of $\{G_1,G_2\}$ is disconnected. Let $(V_A)_i\subseteq V_m$ and $(V_B)_i\subseteq V_n$ be the vertices with degree at least one in color $c_i$, with $|(V_A)_i|=A_i$ and $|(V_B)_i| = B_i$. 

    \case{$A_1<m$ and $B_1<n$}\label{case:bip-isolated}
    Then $G_1$ has an isolated vertex in each part; let $v\in V_m$ and $w\in V_n$ be isolated in $G_1$. In $G_2$, $v$ is adjacent to every vertex in $V_n$ and $w$ is adjacent to every vertex in $V_m$ including $v$. Thus, $\kappa(G_2)=1$. It remains to show $\kappa(G_1)-1\le 2\floor{n/2}$. 

    We have $\kappa(G_1)=m - A_1 +n-B_1 + \kappa^*(G_1)$. Let $b=\ceil{|E(G_1)|/A_1}$, the least integer $x$ with $|E(G_1)|\le A_1 x$. Note that $1 \le b \le n$. Consider the bipartite graph $$G_1':=K_{A_1,b} \cup K_{m-A_1,0} \cup K_{0,n-b}\subseteq K_{m,n}.$$ Then, $\kappa(G_1')=m-A_1+n-b+1$. We will show that $\kappa^*(G_1)\le B_1+1-b$, which implies $\kappa(G_1)\le \kappa(G_1')$.
    
    By \cref{lem:bip-max-edges}, $|E(G_1)| \le A_1(B_1-\kappa^*(G_1)+1)$ and so $\frac{|E(G_1)|}{A_1} \le B_1-\kappa^*(G_1)+1$. Since $B_1-\kappa^*(G_1)+1\in \mathbb{N}$, $\ceil{|E(G_1)|/A_1}\le B_1-\kappa^*(G_1)+1$. By definition, $b = \ceil{|E(G_1)|/A_1}$, which gives $b\le B_1-\kappa^*(G_1)+1$, or equivalently $\kappa^*(G_1)\le B_1+1-b$ and thus $\kappa(G_1)\le \kappa(G_1')$.
    
    Now, let $S:=A_1+b=n+m+1-\kappa(G_1')$. Then $b=S-A_1$. We also have $|E(G_1)|\le A_1 b$. Then
    \begin{align*}
    |E(G_1)|\le A_1b=A_1(S-A_1)\le \floor{S^2/4} &= \floor*{\frac{(m+n+1-\kappa(G_1'))^2}
    {4}}\le   \frac{(m+n+1-\kappa(G_1'))^2}
    {4}\\
    4|E(G_1)|&\le (m+n+1-\kappa(G_1'))^2\\
    \kappa(G_1')&\le m+n+1-2\sqrt{|E(G_1)|}\quad\text{since } m+n+1-\kappa(G_1') = S > 0.
    \end{align*}
    Since $|E(G_1)|\in \{\floor{mn/2},\ceil{mn/2}\}$, we then have $$\kappa(G_1') \le m+n+1-2\sqrt{\floor{mn/2}} \le m+n+1-\sqrt{2(mn-1)} \le \begin{cases} m+n+1-m & \text{if } n \text{ is even}\\ 
 m+n+1-(m+1) & \text{if } n \text{ is odd}
\end{cases}\le  2\floor{n/2}+1$$ where the second to last inequality follows from the fact that $\sqrt{2(mn-1)}\ge m$ when $n$ is even and $\sqrt{2(mn-1)}\ge m+1$ when $n$ is odd for $2 \le m \le n$.
      
    Thus, $\kappa(G_1)\le \kappa(G_1')\le 2\floor{n/2}+1$, so $\sum_{i=1}^2 (\kappa(G_i)-1) \le 2\floor{n/2}$. By symmetry, if either $G_1$ or $G_2$ satisfies the conditions of \cref{case:bip-isolated}, we are done.
    
    \case{$A_i=m$ or $B_i=n$ for each $i$}
    Suppose that $A_i=m$. Then \begin{equation}
        \kappa(G_i)=\kappa^*(G_i)+n-B_i \label{eq:bip-kappa}.
    \end{equation} By \cref{lem:bip-max-edges}, $|E(G_i)|\le m(B_i-\kappa^*(G_i)+1).$ Since $|E(G_i)|\ge\floor{mn/2}$,
    \begin{align}
        \floor{mn/2}&\le m(B_i-\kappa^*(G_i)+1)\nonumber\\
        \kappa^*(G_i)&\le B_i+1-\frac{1}{m}\floor{mn/2}\label{eq:bip-kappa-star}
    \end{align} Combining \eqref{eq:bip-kappa} and \eqref{eq:bip-kappa-star}, we have 
    $$
        \kappa(G_i)
        =\kappa^*(G_i)+n-B_i\le 
        B_i+1-\frac{1}{m}\floor{mn/2}+n-B_i
        =
        n+1-\frac{1}{m}\floor{mn/2}
        =
        \begin{cases} n+1 - n/2 & \text{if } mn \text{ even}\\ 
 n+1-n/2+1/(2m) & \text{if } mn \text{ odd}
\end{cases}
    $$ For either parity of $mn$, since $m \ge 2$ and $\kappa(G_i)$ is an integer, we have
    $$
        \kappa(G_i)
        \le 
        \floor{n/2}+1
    $$
    
    Now, suppose that $B_i=n$. A similar argument produces 
    $$
        \kappa(G_i)
        \le
        \floor{m/2}+1
        \le 
        \floor{n/2}+1
    $$ Thus, for $i=1,2$, we have $\kappa(G_i)\le \floor{n/2}+1$ so $\sum_{i=1}^2(\kappa(G_i)-1)\le 2\floor{n/2}$, and we are done. 

    In all cases, we have $\sum_{i=1}^2(\kappa(G_i)-1)\le 2\floor{n/2}$. By \cref{prop:comp-bound}, $$\wld(G)\le 2\floor{n/2}.$$ The sharpness of the bound is guaranteed by \cref{con:eq2bip}.
    \end{proof}
    
\subsection{Minimum Proper Colorings}
By K{\"o}nig's edge-coloring theorem \cite{RD2016}, the complete bipartite graph $K_{m,n}$ satisfies $$\chi'(K_{m,n}) = \Delta(K_{m,n}).$$
For $m\le n$, every minimum proper coloring of $K_{m,n}$ uses exactly $n$ colors. Moreover, each color class is a matching of size $m$. For a color $c_i$, we write $M_i$ for its monochromatic matching. We determine the wild number for minimum proper colorings of $K_{m,n}$ and show that it is independent of the choice of proper edge coloring.

\begin{theorem}\label{thm:bip-proper}
Let $K_{m,n}$ be a complete bipartite graph with $2 \le m \le n$. For any minimum proper coloring $\rho$ of $K_{m,n}$, $$\wld_{\rho}(K_{m,n}) = n.$$
\end{theorem}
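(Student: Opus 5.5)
The plan is to prove the two inequalities $\wld_\rho(K_{m,n})\ge n$ and $\wld_\rho(K_{m,n})\le n$ separately. For the lower bound we count monochromatic components and apply \cref{prop:ceil-bound}. For the upper bound we exhibit an explicit color-connecting set of size $n$.

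\emph{Lower bound.} As noted before the statement, $\rho$ uses exactly $\ell=n$ colors, and each color class $M_i$ is a matching of size $m$. Thus $G_i$ is a spanning subgraph of $K_{m,n}$ on $m+n$ vertices whose edges form a matching, hence a forest, with $m$ edges. So $\kappa(G_i)=(m+n)-m=n$ for every $i$. Then
$$\sum_{i=1}^{n}(\kappa(G_i)-1)=n(n-1).$$
Since $\ell-1=n-1\ge 1$ (because $n\ge m\ge 2$), \cref{prop:ceil-bound} gives
$$\wld_\rho(K_{m,n})\ge \ceil*{\frac{n(n-1)}{n-1}}=n.$$

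\emph{Upper bound.} Fix a vertex $a\in V_m$ and let $W$ be the set of the $n$ edges from $a$ to every vertex of $V_n$, a spanning star of $V_n\cup\{a\}$. Fix any color $c_i$. The edges of $W$ alone connect $V_n\cup\{a\}$ into a single component. Every other vertex $a'\in V_m\setminus\{a\}$ is covered by $M_i$, since $M_i$ has $m=|V_m|$ edges and is a matching, so it saturates $V_m$. Its $M_i$-edge goes to some vertex of $V_n$, which lies in the star. Hence $G_i^W$ is connected for every $i$, so $W$ is color-connecting and $\wld_\rho(K_{m,n})\le |W|=n$.

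Combining the two bounds gives $\wld_\rho(K_{m,n})=n$, for every minimum proper coloring $\rho$. There is no real obstacle here. The only point to check is that each color class saturates the smaller side $V_m$, which follows from $|M_i|=m$ and properness. This is also what makes the value independent of the particular coloring.
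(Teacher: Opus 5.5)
Your proposal is correct and follows the paper's proof essentially step for step. The lower bound applies \cref{prop:ceil-bound} with each of the $n$ color classes having exactly $n$ components, and the upper bound uses the star of all $n$ edges at a fixed vertex of $V_m$, relying on the fact that every $M_i$ saturates $V_m$.
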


\begin{proof}
    Let $M_1,\ldots, M_n$ denote the color classes of $K_{m,n}$ with respect to $\rho$. Each $M_i$ consists of $m$ independent edges, forming $m$ components of size 2. The remaining $n-m$ vertices are isolated, forming $n-m$ components of size 1. Thus, each color class has $m + (n-m) = n$ components. From \cref{prop:ceil-bound}, it follows that $$
        \wld_\rho(K_{m,n})\ge \left\lceil\frac{\sum_{i=1}^n(\kappa(M_i)-1)}{n-1}\right\rceil=\frac{n(n-1)}{n-1}=n.
    $$
    
    To prove the upper bound, take some $v\in V_m$ and let $W$ consist of every edge incident to $v$. Note that $|W|=n$. 

    Consider any color class $M_i$. Since $M_i$ is a matching of size $m$, $M_i$ covers $V_m$. Thus, $M_i$ contains exactly one edge incident to $v$. The remaining $n-1$ edges of $W$ connect $v$ to every vertex in $V_n$ that is not matched to $v$ in $M_i$, including all $n-m$ vertices of $V_n$ isolated in $M_i$. The vertex of $V_n$ matched to $v$ in $M_i$ is already connected to $v$ by its edge in $M_i$.
    
    For any other vertex $v'\in V_m \setminus \{v\}$, $M_i$ matches $v'$ with some vertex $u'\in V_n$. Thus, the edge $u'v'$ is in $M_i$, while the edge $u'v$ belongs to $W$. Hence, $u'$ and $v'$ are connected to $v$ in $M_i\cup W$. Therefore, every vertex of $K_{m,n}$ belongs to the same component of $M_i\cup W$. Since $M_i$ was arbitrary, $W$ is color-connecting. Together with the lower bound, $W$ is ideal and 
    \[
        \wld_{\rho}(K_{m,n})=n.\qedhere
    \]
\end{proof}

\section{Cartesian Products}\label{sec:products}

Throughout this section, we write $\rho$ for an arbitrary minimum proper coloring of $G \Box H$ using $\chi'(G\Box H)$ colors. Recall that $|V(G \Box H)| = |V(G)||V(H)|$ and $|E(G \Box H)| = |E(G)||V(H)|+|V(G)||E(H)|$.

\begin{definition}\label{def:product-coloring}
    Fix minimum proper colorings $\rho_1$ and $\rho_2$ of $G$ and $H$, respectively, with disjoint color sets. We define $\rho^*$ as the edge coloring of $G \Box H$ where $\rho^*((g,h)(g',h)) = \rho_1(gg')$ and $\rho^*((g,h)(g,h')) = \rho_2(hh')$.
\end{definition} This coloring is proper and uses $\chi'(G) + \chi'(H)$ colors, but, in general, is not minimum.

\begin{lemma}\label{lem:product}
    For a minimum proper coloring of $G \Box H$, there exists a color $c_i$ such that $(G\Box H)_i$ has at least
    \[
    |V(G \Box H)|
    -
    \frac{|E(G \Box H)|}{\chi'(G \Box H)}
    \]
    connected components.
\end{lemma}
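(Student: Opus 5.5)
The plan is a pigeonhole argument on the sizes of the color classes, combined with the elementary fact that a forest-type bound holds for any graph: a graph on $N$ vertices with $e$ edges has at least $N-e$ connected components. Write $N=|V(G\Box H)|$, $E=|E(G\Box H)|$ and $\chi=\chi'(G\Box H)$, and let $\rho$ be a minimum proper coloring with color classes $(G\Box H)_1,\dots,(G\Box H)_\chi$. Since $\rho$ is surjective onto exactly $\chi$ colors and every edge receives one color, the class sizes satisfy
$$\sum_{i=1}^{\chi}|E((G\Box H)_i)| = E.$$
By averaging, some color $c_i$ satisfies $|E((G\Box H)_i)|\le E/\chi$.

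Next I will verify the component count. Each edge added to a spanning subgraph reduces the number of components by at most one. Starting from the empty graph on $N$ vertices, which has $N$ components, the spanning subgraph $(G\Box H)_i$ therefore satisfies
$$\kappa((G\Box H)_i)\ge N-|E((G\Box H)_i)|\ge N-\frac{E}{\chi}.$$
This is exactly the claimed bound. One could sharpen it using the fact that $(G\Box H)_i$ is a matching, so that $\kappa = N-|E((G\Box H)_i)|$ exactly, but the inequality is all that is needed.

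There is no real obstacle here. The only point requiring care is that the bound need not be an integer, so the statement should be read as $\kappa\ge N-E/\chi$ without rounding; rounding up would also be legitimate, because the minimum class size is an integer at most $\lfloor E/\chi\rfloor$. The lemma is presumably then fed into \cref{prop:comp-bound} to obtain $\wld_\rho(G\Box H)\ge N-E/\chi-1$.
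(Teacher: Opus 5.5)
Your proof is correct and follows the paper's argument: average the color-class sizes to find a color with at most $|E(G\Box H)|/\chi'(G\Box H)$ edges, then convert that edge count into a component count. The only difference is that the paper uses the exact count $\kappa((G\Box H)_i)=|V(G\Box H)|-m_i$ for a matching, whereas your general bound $\kappa\ge N-e$ shows the lemma does not actually need properness.
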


\begin{proof}
    Let $c_1,\dots,c_{\chi'(G\Box H)}$ be the colors of the minimum proper coloring and let $m_i$ be the number of edges colored $c_i$. Each color class is a matching, so $\kappa((G \Box H)_i) = m_i+(|V(G \Box H)|-2m_i) = |V(G \Box H)| - m_i$ where
    $$ 
        \sum_{i=1}^{\chi'(G \Box H)} m_i = |E(G \Box H)|.
    $$
    By averaging, there exists some color $c_i$ such that $m_i \le |E(G \Box H)|/\chi'(G \Box H)$. Therefore, 
    $$
        \kappa((G\Box H)_i) \ge |V(G\Box H)| - \frac{|E(G \Box H)|}{\chi'(G \Box H)}.\qedhere
    $$
\end{proof}

The bounds in this section are obtained by bounding from below the number of components of some color class. For an $\ell$-edge-colored graph $(G, \gamma)$, we write $$\clb_{\gamma}(G)=\max_{1 \le i \le \ell} (\kappa(G_i) - 1).$$ Note that $\clb_{\gamma}(G) \le \wld_{\gamma}(G)$ by \cref{prop:comp-bound}.

\subsection{Complete Graphs}

\begin{proposition}\label{prop:product-kn}
    Let $n,m \ge 4$. Let $\rho_1$ and $\rho_2$ be any minimum proper colorings of $K_n$ and $K_m$, respectively. Then, for $\rho' \in \{\rho, \rho^*\}$, $$\wld_{\rho_1}(K_n) + \wld_{\rho_2}(K_m) < \wld_{\rho'}(K_n \Box K_m).$$
\end{proposition}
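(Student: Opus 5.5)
Since $K_n$ and $K_m$ have $n$ and $m$ vertices, \eqref{eq:def-bound} gives $\wld_{\rho_1}(K_n)+\wld_{\rho_2}(K_m)\le (n-1)+(m-1)=n+m-2$. It therefore suffices to show $\clb_{\rho'}(K_n\Box K_m) > n+m-2$, that is, to exhibit a color class of $K_n\Box K_m$ with at least $n+m$ components. We then conclude via $\clb\le\wld$ from \cref{prop:comp-bound}. We use the trivial upper bound rather than \cref{thm:small-complete-prop-nums}, since the exact values are only known for small orders.

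\textbf{Case $\rho'=\rho^*$.} A color of $\rho_1$ appears only on edges of the form $(g,h)(g',h)$. Its class is the disjoint union of $m$ copies of the matching $M_i$ of $K_n$, one in each fiber $K_n\times\{h\}$. The matching $M_i$ has at most $n/2$ edges, so each fiber contributes at least $n-\lfloor n/2\rfloor=\lceil n/2\rceil$ components. The color class therefore has at least $m\lceil n/2\rceil$ components. We need $m\lceil n/2\rceil \ge n+m$. Since $\lceil n/2\rceil\ge 2$ for $n\ge4$, it is enough to check $(\lceil n/2\rceil-1)m\ge n$. Using $m\ge4$, we have $4(\lceil n/2\rceil-1)\ge 2n-4\ge n$ for $n\ge4$, so the inequality holds.

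\textbf{Case $\rho'=\rho$ (arbitrary minimum proper).} We apply \cref{lem:product}. The product $K_n\Box K_m$ is $(n+m-2)$-regular, so by Vizing $\chi'\ge n+m-2$. The graph has $nm$ vertices and $nm(n+m-2)/2$ edges, so \cref{lem:product} gives a color class with at least
$$nm-\frac{nm(n+m-2)}{2\chi'}\ge nm-\frac{nm}{2}=\frac{nm}{2}$$
components. The inequality holds because $\chi'\ge\Delta=n+m-2$. This lower bound is valid for either value of $\chi'$ ($\Delta$ or $\Delta+1$), so we do not need to know which class the graph is in.

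It remains to check that $nm/2\ge n+m$, i.e. $(n-2)(m-2)\ge4$. This holds for $n,m\ge4$. We then have $\clb\ge nm/2-1\ge n+m-1>n+m-2$.

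The main point to get right is the direction of the bound in \cref{lem:product}: a larger $\chi'$ only increases the number of components guaranteed. The only other step needing care is the elementary inequality at the boundary $n=m=4$. There everything still works: $nm/2=8=n+m$, and $m\lceil n/2\rceil=8$. Strictness comes from the $-2$ in the trivial bound $\wld_{\rho_1}(K_n)+\wld_{\rho_2}(K_m)\le n+m-2$.
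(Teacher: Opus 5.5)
Your proof is correct and follows the paper's argument. Like the paper, you bound the left side trivially by $n+m-2$, apply \cref{lem:product} with $\chi'\ge n+m-2$ for an arbitrary minimum proper coloring, and count the $m$ fiber copies of a $\rho_1$ color class for $\rho^*$ (your $\lceil n/2\rceil$ simply merges the paper's two parity cases). One minor point is that $\chi'\ge\Delta$ is the trivial lower bound, not Vizing's theorem.
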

\begin{proof}
    We consider the cases $\rho'=\rho$ and $\rho'=\rho^*$. By definition, $\wld_{\rho_1}(K_n) \le n-1$ and $\wld_{\rho_2}(K_m) \le m-1$.
    
    \case{$\rho'=\rho$}
    We show that $\clb_{\rho'}(K_n \Box K_m) > \wld_{\rho_1}(K_n) + \wld_{\rho_2}(K_m).$ By \cref{lem:product}, there exists a color whose corresponding subgraph has at least
    \begin{align*}
        |V(K_n \Box K_m)| - \frac{|E(K_n \Box K_m)|}{\chi'(K_n \Box K_m)}.
    \end{align*}
    components. Since $\Delta(K_n \Box K_m) = n+m-2$, we have $\chi'(K_n \Box K_m) \ge n+m-2$. Combining \cref{lem:product} with the definition of  $\clb_{\gamma}(G)$, we then have
    \begin{align*}
        \frac{nm}{2} - 1 = nm - \frac{\binom{n}{2} m + \binom{m}{2} n}{n + m - 2} - 1 &\le nm - \frac{\binom{n}{2} m + \binom{m}{2} n}{\chi'(K_n\Box K_m)} - 1 \le \clb_{\rho'}(K_n \Box K_m).
    \end{align*}
    Observe that $n + m - 2 < nm/2-1$ since $n,m \ge 4$, thus $$\wld_{\rho_1}(K_n) + \wld_{\rho_2}(K_m) \le n + m - 2 < \frac{nm}{2} - 1 \le \clb_{\rho'}(K_n \Box K_m) \le \wld_{\rho'}(K_n \Box K_m).$$
    
    \case{$\rho'=\rho^*$}
 Again, we show that $\clb_{\rho'}(K_n \Box K_m) > \wld_{\rho_1}(K_n) + \wld_{\rho_2}(K_m)$. Recall that when $n$ is even each color class of $K_n$ is a perfect matching of size $n/2$ and when $n$ is odd each color class of $K_n$ is a near-perfect matching of size $(n - 1)/2$. Thus, when $n$ is even the subgraph corresponding to each color class of $K_n$ contains $n/2$ components and when $n$ is odd the subgraph corresponding to each color class of $K_n$ contains $(n + 1)/2$ components. Under $\rho^*$, the subgraph associated with a color class from $\rho_1$ occurs in each of the $m$ copies of $K_n$, so when $n$ is even $\clb_{\rho'}(K_n \Box K_m) \ge nm/2 - 1$ and when $n$ is odd $\clb_{\rho'}(K_n \Box K_m) \ge m (n+1)/2 - 1$. Observe that $n + m - 2 < nm/2-1$ and $n + m - 2 < m(n+1)/2-1$ since $n,m \ge 4$. For either parity of $n$, we then have 
    \[
        \wld_{\rho_1}(K_n) + \wld_{\rho_2}(K_m) <\clb_{\rho'}(K_n \Box K_m) \le \wld_{\rho'}(K_n \Box K_m).\qedhere
    \]
\end{proof}

\subsection{Complete Bipartite Graphs}

\begin{proposition}\label{prop:product-kmn}
    Let $2\le m\le n$ and $2\le s\le t$. Let $\rho_1$ and $\rho_2$ be any minimum proper colorings of $K_{m,n}$ and $K_{s,t}$, respectively. Then, for $\rho' \in\{\rho, \rho^*\}$, $$\wld_{\rho_1}(K_{m,n}) \cdot \wld_{\rho_2}(K_{s,t}) < \wld_{\rho'}(K_{m,n} \Box K_{s,t}).$$
\end{proposition}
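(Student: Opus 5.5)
By \cref{thm:bip-proper}, $\wld_{\rho_1}(K_{m,n})=n$ and $\wld_{\rho_2}(K_{s,t})=t$, so the left-hand side equals $nt$. The plan mirrors \cref{prop:product-kn}: for each $\rho'\in\{\rho,\rho^*\}$ we show that $\clb_{\rho'}(K_{m,n}\Box K_{s,t})>nt$, and then conclude with $\clb_{\rho'}\le\wld_{\rho'}$, which holds by \cref{prop:comp-bound}. The product has $N=(m+n)(s+t)$ vertices and
\[
|E|=mn(s+t)+st(m+n)
\]
edges. Its maximum degree is $\Delta=n+t$. Since $K_{m,n}\Box K_{s,t}$ is bipartite (a product of bipartite graphs), König's theorem gives $\chi'=n+t$.

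\case{$\rho'=\rho$} By \cref{lem:product}, some color class has at least $N-|E|/(n+t)$ components, so
\[
\clb_\rho\ge (m+n)(s+t)-\frac{mn(s+t)+st(m+n)}{n+t}-1 .
\]
The first step is to simplify this. Writing $(m+n)(s+t)(n+t)-mn(s+t)-st(m+n)$ and expanding, the cross terms collapse to
\[
n^2(s+t)+t^2(m+n)+mnt+nst+\dots
\]
(the exact form comes from routine expansion). A cleaner route is to note that each color class is a matching with at most $\min(\text{part sizes})$ edges. Alternatively, compare directly: we need
\[
(m+n)(s+t)(n+t)-mn(s+t)-st(m+n) > (nt+1)(n+t).
\]
Expanding the left side gives
\[
(m+n)(s+t)(n+t)-mn(s+t)-st(m+n)=n(s+t)(n+t)+m(s+t)t+\dots-st(m+n).
\]
I would verify that this exceeds $nt(n+t)+(n+t)$ using $m,s\ge 2$. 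For example, the term $n(s+t)(n+t)\ge n(t+2)(n+t)$ already dominates $nt(n+t)$ by $2n(n+t)$, and this surplus must absorb the term $st(m+n)$ minus the positive contributions from $m$. This inequality bookkeeping is the main obstacle. I would organize it by grouping the expression as
\[
(n+t)\bigl[(m+n)(s+t)-nt-1\bigr]-mn(s+t)-st(m+n)
\]
and bounding each negative term by $(n+t)$ times something smaller, using $m\le n$ and $s\le t$.

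\case{$\rho'=\rho^*$} Here the argument is direct. Take a color $c$ of $\rho_1$. In each of the $s+t$ copies of $K_{m,n}$ it is a matching of size $m$, so it has $n$ components there. The color $c$ does not appear on the $K_{s,t}$-edges, so $(K_{m,n}\Box K_{s,t})_c$ has exactly $n(s+t)$ components. Hence
\[
\clb_{\rho^*}\ge n(s+t)-1\ge nt+2n-1>nt,
\]
since $s\ge 2$ and $n\ge 2$.

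Combining both cases with $\clb\le\wld$ gives the strict inequality in the statement.
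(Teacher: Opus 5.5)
Your Case $\rho'=\rho^*$ is correct and is exactly the paper's argument: $\clb_{\rho^*}\ge n(s+t)-1>nt$. The gap is in Case $\rho'=\rho$. You set up the same bound as the paper (König gives $\chi'=n+t$, then \cref{lem:product}). However, the inequality that actually proves this case is only announced (``I would verify\dots'', ``the main obstacle''), so as written Case $\rho$ is not established.

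The one explicit expansion you give is also wrong: no $mnt$ or $nst$ terms survive. Expanding fully, everything except two terms cancels:
\[
(m+n)(s+t)(n+t)-mn(s+t)-st(m+n)=n^2(s+t)+t^2(m+n).
\]
So the required inequality $n^2(s+t)+t^2(m+n)>(nt+1)(n+t)$ reduces to $sn^2+mt^2>n+t$, i.e.\ $n(sn-1)+t(mt-1)>0$, which is immediate. This is precisely the paper's one-line computation. It needs no grouping and no use of $m\le n$ or $s\le t$. It is not a missing idea, but it is the entire content of this case and has to be carried out.

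The ``cleaner route'' you mention and then abandon would also close the case. Write $V_s,V_t$ for the parts of $K_{s,t}$. The product is bipartite with classes $(V_m\times V_s)\cup(V_n\times V_t)$ and $(V_m\times V_t)\cup(V_n\times V_s)$, of sizes $ms+nt$ and $mt+ns$. Their difference is $(t-s)(n-m)\ge 0$, so $mt+ns$ is the smaller size. Hence every color class of \emph{any} proper coloring is a matching with at most $mt+ns$ edges. Such a class has at least $(m+n)(s+t)-(mt+ns)=ms+nt$ components, giving $\clb\ge ms+nt-1>nt$ because $ms\ge 4$.

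This handles $\rho$ and $\rho^*$ simultaneously, without König's theorem or averaging. The price is a bound slightly weaker than the paper's $nt+(sn^2+mt^2)/(n+t)-1$: the difference is $\bigl(sn(n-m)+mt(t-s)\bigr)/(n+t)\ge 0$. It is still more than enough for the strict inequality.
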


\begin{proof}
We consider the cases $\rho'=\rho$ and $\rho'=\rho^*$. From \cref{thm:bip-proper}, we have $\wld_{\rho_1}(K_{m,n})= n $ and $\wld_{\rho_2}(K_{s,t}) =t$.
    
    \case{$\rho'=\rho$}
    Note that the Cartesian product of bipartite graphs is again bipartite, so $\chi'(K_{m,n} \Box K_{s,t}) = \Delta(K_{m,n} \Box K_{s,t}) =\Delta(K_{m,n})+\Delta(K_{s,t})=n+t$ by K{\"o}nig's edge-coloring theorem. Applying \cref{lem:product},
    \begin{align*}
        (s + t)(m + n) - \frac{mn(s+t)+st(m+n)}{n+t}-1 
        &\le \clb_{\rho'}(K_{m,n}\Box K_{s,t}).
    \end{align*}
    We show that $\wld_{\rho_1}(K_{m,n}) \cdot \wld_{\rho_2}(K_{s,t}) = n t < (s + t)(m + n) - \frac{mn(s+t)+st(m+n)}{n+t}-1.$ 

    Observe that $$(s + t)(m + n) - \frac{mn(s+t)+st(m+n)}{n+t}-1 - (nt) = \frac{sn^2+mt^2-n-t}{n+t},$$ so it is enough to show that $sn^2+mt^2-n-t = n(sn-1)+t(mt-1)>0$ which is true for $m,n,s,t \ge 2$. It follows that $$\wld_{\rho_1}(K_{m,n}) \cdot \wld_{\rho_2}(K_{s,t}) < \clb_{\rho'}(K_{m,n}\Box K_{s,t}) \le  \wld_{\rho'}(K_{m,n} \Box K_{s,t}).$$

    \case{$\rho'=\rho^*$}
    Recall that a minimum proper coloring of $K_{m,n}$ uses $n$ colors where each color class contains $m$ edges, so the corresponding subgraph contains $m+(n-m)=n$ components for each color class. Under $\rho^*$, the subgraph associated with a color class from $\rho_1$ occurs in each of the $s+t$ copies of $K_{m,n}$, so $\clb_{\rho'}(K_{m,n} \Box K_{s,t}) \ge n(s+t)-1$. Thus, $$\wld_{\rho_1}(K_{m,n}) \cdot \wld_{\rho_2}(K_{s,t}) = nt < n(s + t) - 1 \le \wld_{\rho'}(K_{m,n} \Box K_{s,t})$$
    where the first inequality follows from the fact that $ns>1$ for $n,s \ge 2$. 

\end{proof}

\section{Concluding Remarks}\label{sec:conclusion}

\cref{thm:small-complete-prop-nums} shows that the wild number for $K_n$ with a minimum proper coloring is exactly the lower bound for $3 \le n \le 12$. These results lead to the following conjecture. 
\begin{conjecture}
\label{conj:proper-kn-exact}
For every $n\ge3$ and every minimum proper coloring $\rho$ of $K_n$,
$$
    \wld_\rho(K_n)
    =
    \left\lfloor
        \frac{4\lceil n/2\rceil}{3}
    \right\rfloor.
$$
\end{conjecture}

Question~7.5 of \cite{WildNumber} asks how far the greedy algorithm can deviate from the wild number. For a minimum proper coloring $\rho$ of $K_n$, we showed in \cref{lem:greedy-acyclic} that the greedy output is always a forest and therefore has at most $n-1$ edges. Combining this with \cref{thm:proper-kn-lowerbound} gives 
$$ 
    0 
    \le 
    |\Wg|-\wld_\rho(K_n) 
    \le 
    n-1-\left\lfloor
        \frac{4\lceil n/2\rceil}{3}
    \right\rfloor. 
$$

If the algorithm saturates a color class, then its output is a spanning tree and therefore has exactly $n-1$ edges. By \cref{cor:greedy-saturation}, this output is not ideal whenever $n\ge 7$, since smaller color-connecting sets exist by \cref{thm:proper-odd-upper,cor:proper-even-upper}. 

\section*{Acknowledgments}

This research was conducted during the summer of 2026 as part of the Summer Undergraduate Applied Mathematics Institute at Carnegie Mellon University. The first author is supported by NSF Grant DMS-2536176. 

\bibliographystyle{plainurl}
\bibliography{references}

\end{document}